\newtheorem{theorem}{Theorem}[section]
\newtheorem{proposition}[theorem]{Proposition}
\newtheorem{lemma}[theorem]{Lemma}
\newtheorem{sub-lemma}[theorem]{Sub-Lemma}
\newtheorem{claim}[theorem]{Claim}
\def\C{\mathcal{C}}
\def\Z{\mathcal{Z}}
\def\M{\mathcal{M}}
\def\R{\mathcal{R}}
\def\D{\Delta}
\def\NN{\mathbb{N}}
\def\PP{\mathbb{P}}
\def\TT{\mathbb{T}}
\def\DD{\mathbb{D}}
\def\CC{\mathbb{C}}
\DeclareMathOperator{\supp}{supp}
\DeclareMathOperator{\diam}{diam}
\DeclareMathOperator{\dist}{dist}
\def\M{\mathcal{M}}
\let\eps=\varepsilon
\def\D{\mathcal{D}}
\def\Z{\mathcal{Z}}
\def\1{{{\mathit 1} \!\!\>\!\! I} }
\renewcommand{\liminf}{\mathop{{\underline {\hbox{{\rm lim}}}}}}
\renewcommand{\limsup}{\mathop{{\overline {\hbox{{\rm lim}}}}}}
\def\rec{{\rm Rec}}
\DeclareMathOperator{\Leb}{Leb}
\DeclareMathOperator{\Cov}{Cov}
\begin{document}

\title[Poisson law for hyperbolic systems with polynomial decay]{Poisson law for some nonuniformly hyperbolic dynamical systems with polynomial rate of mixing}
\author{Fran\c{c}oise P\`ene \and Beno\^\i t Saussol}
\address{1)Universit\'e Europ\'eenne de Bretagne, 
France\\
2)Universit\'e de Bretagne Occidentale, Laboratoire de
Math\'ematiques de Bretagne Atlantique, CNRS UMR 6205, Brest, France\\
3)partially supported
by the ANR project PERTURBATIONS (ANR-10-BLAN 0106)}
\email{francoise.pene@univ-brest.fr}
\email{benoit.saussol@univ-brest.fr}
\keywords{}
\subjclass[2000]{Primary: 37B20}
\begin{abstract}
We consider some nonuniformly hyperbolic invertible dynamical systems which are modeled by a Gibbs-Markov-Young tower.
We assume a polynomial tail for the inducing time and a polynomial control of hyperbolicity, as introduced by Alves, Pinheiro and Azevedo. These systems admit a physical measure with polynomial rate of mixing.
In this paper we prove that the distribution of the number of visits to a ball $B(x,r)$ 
converges to a Poisson distribution as the radius $r\to0$ and after suitable normalization.
\end{abstract}
\date{\today}
\maketitle
\bibliographystyle{plain}
\tableofcontents
\section{Introduction}
Many dynamical systems with some hyperbolicity enjoy strong statistical properties. Let us mention a few of them:
existence of physical measure, exponential decay of correlations, central limit theorem, large  deviation principles, etc.
That is, the probabilistic behavior of these systems mimics an i.i.d. process.
Beyond uniform hyperbolicity the situation may be different.
This has a visible consequence for example in the validity of the CLT, 
which is often related to a summable decay of correlation. 

We will consider here a class of nonuniformly hyperbolic systems for which the 
polynomial decay of correlation can be arbitrarily slow. The setting, 
introduced by Alves and Pinheiro~\cite{AlvesPinheiro} and generalized by Alves 
and Azevedo~\cite{aa} is a system modeled by a Young tower on which we have a 
uniform polynomial control on the
contraction along stable manifolds and backward contraction along unstable 
manifolds. The full description of the setting is in Section~\ref{sec:tower}.

Let $f$ be an invertible map defined on a riemaniann manifold $\M$ giving rise 
to a metric $d$. Suppose that $\mu$ is an invariant measure for $f$. Given $x$ in
$\M$, we are 
interested in the statistical behavior, with respect to $\mu$, of the number of 
occurences of entrance times in the ball $B(x,r)$. Namely, setting
\[
N(x,r)(y)= \sharp \left\{n\in\NN\colon d(f^ny,x)<r,\, 1\le n\le \frac{t}{\mu(B(x,r))}\right\}
\]
we are interested in the limit distribution of $N(x,r)$, as $r$ goes to $0$. 
The main result of the paper is that this limit is the Poisson distribution, for $\mu$-a.e. $x$. This question has been adressed for many different dynamical systems.
Our result is a generalization of the recent work by Collet and Chazottes~\cite{cc} who studied towers with exponential tail of return time to a setting where the tail is only polynomial (We refer to \cite{cc} and references therein for  details on previous works). Our work applies for example to stadium like billiards.
During the preparation of this work Freitas, Haydn and Nicol~\cite{fhn} obtained 
a similar result on Poisson distribution for these billiards by a different method (inducing the billiard map on a suitable reference set where the induced map has a tower with exponential tail). We also mention Haydn and Wasilewska \cite{wasilewska} whose approach needs a polynomial tail of sufficiently large order, in a nonuniformly expanding setting.

Our limit theorem relies on precise mixing estimates for sets defined with balls. For some systems the indicator functions of balls have a bounded norm in a good Banach space of functions for which one can use the decay of correlations directly. This leads to a Poisson distribution and even gives stronger results~\cite{afv}.
Here in our situation one cannot expect to implement this strategy and we need to approximate our balls to get 
mixing estimates. That is why we need a control on the measure of 
neighborhood of balls as in~\eqref{couronne}. 
Outside absolutely continuous measure this leads to delicate questions. In~\cite{cc} a general result 
is obtained for SRB measure with one-dimensional unstable manifold. The generalization to higher dimensional systems 
or polynomial tower being open, we left the condition~\eqref{couronne} as an assumption. We emphasize that this condition
is the weakest that one can ask in our setting.

A major step in our proof of the Poisson distribution is to bypass the lengthy and delicate study of short
returns by a simple argument based on recurrence rates. Indeed we show that for our systems 
\[
\min\{n\ge1\colon d(f^nx,x)<r\} \approx r^{-\dim_H\mu}
\]
for $\mu$-a.e. $x$, where $\dim_H\mu$ stands for the Hausdorff dimension of $\mu$ 
(See Section \ref{sec:recrate} for precise statement). There are some systems with polynomial decay of correlation for which the above behavior does not hold and it turns out that the Poisson distribution cannot happen in these cases (See \cite{grs}).

\section{Assumptions}\label{sec:tower}
We consider an invertible transformation $f:\M\rightarrow \M$ on a finite-dimensional Riemannian manifold satisfying assumptions
of \cite{aa}. 
These assumptions ensure the existence of a $f$-invariant SRB probability measure $\mu$ and that $(\M,f,\mu)$ can be modeled by a 
Gibbs Markov Young tower with good properties as described briefly below for completeness.

Recall that a stable (resp. unstable) manifold is an embedded
disk $\gamma\subset \M$ such that, for every $x,y\in\gamma$, 
$\dist(f^{n}x,f^{n}y)\rightarrow 0$ 
(resp. $\dist(f^{-n}x,f^{-n}y)\rightarrow 0$) as $n$
goes to infinity.

Consider two continuous families $\Gamma^u$ and $\Gamma^s$ 
of respectively unstable and stable $C^1$ manifolds such that
there exists $\alpha_{\min}>0$ so that,
for every $(\gamma^s,\gamma^u)\in\Gamma^s\times\Gamma^u$, we have
$$\dim\gamma^s+\dim\gamma^u=\dim \M,\ \ \#(\gamma^s\cap\gamma^u)=1\ \ \mbox{and}\ \ |\angle(\gamma^s,\gamma^u)|\ge \alpha_{\min}.$$
We set $\Lambda:=(\bigcup_{\gamma^u\in\Gamma^u}\gamma^u)\cap(\bigcup_{\gamma^s\in\Gamma^s}\gamma^s)$ and assume that the following properties hold

\begin{itemize}
\item[(P1)] \underline{Markov}: there exists a family $(\Lambda_i)_{i\ge 1}$ of
pairwise disjoint subsets of the form $\Lambda_i=(\bigcup_{\gamma^u\in\Gamma^u}\gamma^u)\cap(\bigcup_{\gamma^s\in\Gamma_i^s}\gamma^s)$ for some family
$(\Gamma_i^s)_i$ of pairwise disjoint subsets of $\Gamma^s$ such that
\begin{itemize}
\item[(a)] for some $\gamma\in\Gamma^u$, we have $\Leb_\gamma(\Lambda)>0$ and 
$\Leb_\gamma(\Lambda\setminus\bigcup_i\Lambda_i)=0$;
\item[(b)] For every $i\ge 1$, there exists an integer $R_i\ge 1$
such that $f^{R_i}(\Lambda_i)=(\bigcup_{\gamma^u\in\Gamma_i^u}\gamma^u)\cap(\bigcup_{\gamma^s\in\Gamma^s}\gamma^s)$ for some $\Gamma_i^u\subset\Gamma^u$.
Moreover, for every $\gamma^s\in\Gamma_i^s$, there exists $\gamma_0^s\in\Gamma^s$ such that $f^{R_i}(\gamma^s)\subset\gamma_0^s$ and,
for every $\gamma^u\in\Gamma^u$, 
there exists $\gamma_0^u\in\Gamma_i^u$ such that $\gamma_0^u\subset f^{R_i}(\gamma^u)$.
\end{itemize}
\end{itemize}
This enables us to define a particular return time $R:\Lambda\rightarrow\mathbb N$ and the associated return map $f^R:\Lambda\rightarrow\Lambda$ by setting
$$R_{|\Lambda_i}\equiv R_i\ \ \mbox{and}\ \ (f^R)_{|\Lambda_i}\equiv f^{R_i}.$$
We also define a separation time $s:\Lambda\times\Lambda\rightarrow\mathbb N\cup\{\infty\}$ for the return map as follows:
$$\forall x,y\in\Lambda,\ \ s(x,y):=\min\{n\ge 0:\exists j\ge 1,\ 
(f^R)^n(x)\in\Lambda_j\ \mbox{and}\ (f^R)^n(y)\not\in\Lambda_j\}.$$
With these notations, we assume that 
there exist $\alpha>0$,
$\beta\in(0,1)$ and $C>0$ such that, for every $\gamma_0^u,\gamma_1^u\in\Gamma^u$ and every $\gamma^s\in\Gamma^s$, we have
\begin{itemize}
\item[(P2)] \underline{Polynomial contraction on stable leaves}: for every $x,y\in\gamma^s$
and every $n\ge 1$, we have $\dist(f^n(x),f^n(y))\le C n^{-\alpha}$; 
\item[(P3)] \underline{Backward polynomial contraction on unstable leaves}: for every $x,y\in\gamma_0^u$
and every $n\ge 1$, we have $\dist(f^{-n}(x),f^{-n}(y))\le C n^{-\alpha}$; 
\item[(P4)]\underline{Bounded distortion}: for every $x,y\in\gamma_0^u\cap\Lambda$, we have
$$\log\frac{\det D(f^R)_u(x)}{\det D(f^R)_u(y)}\le C\beta^{s(f^R(x),f^R(y))} .$$
\item[(P5)] \underline{Regularity of the stable foliation}: consider the map
$\Theta_{\gamma_0^u,\gamma_1^u}:
\gamma_0^u\cap\Lambda\rightarrow\gamma_1^u\cap \Lambda$ defined by
$\Theta_{\gamma_0^u,\gamma_1^u}(x)$ is the unique $x'$ for which
there exists $\gamma\in\Gamma^s$ such that $x\in\gamma\cap\gamma_0^u$
and $x'\in\gamma\cap\gamma_1^u$. We assume that
\begin{itemize}
\item[(a)] $\Theta_{\gamma_0^u,\gamma_1^u}$ is absolutely continuous and
$$U:=\frac{d((\Theta_{\gamma_0^u,\gamma_1^u})_*\Leb_{\gamma_0^u})}
  {d\Leb_{\gamma_1^u}}=\prod_{i\ge 0}\frac{\det Df_u \circ f^i}{\det
     Df_u\circ f^i\circ \Theta_{\gamma_0^u,\gamma_1^u}^{-1}};$$
\item[(b)]
for every $x,y\in\gamma_1^u$, we have $\log(U(x)/U(y))\le
    C\beta^{s(x,y)}$. 
\end{itemize}

\end{itemize}
We assume that $\gcd(R_i,i\ge 1)=1$.
We consider here the case when the return time $R$ has a polynomial tail
distribution, more precisely we assume that 
\begin{equation}\label{poly}
\Leb_\gamma(R>n)\le C n^{-\zeta},\ \ \mbox{for some}\ \zeta>1,
\end{equation}
which ensures the integrability of $R$ with respect to $\Leb_\gamma$.
Under these conditions, the systems admits a SRB measure $\mu$: $\mu$ has no zero Lyapunov 
exponent and the conditional measures on local unstable manifolds are absolutely continuous
with respect to the Lebesgue measure on these manifolds.

We recall the definition of the Hausdorff dimension of the measure $\mu$ 
as 
\[
\dim_H\mu = \inf_{\mu(Y)=1} \dim_H Y. 
\]
We make the standing assumption that
\begin{equation}\label{eq:deltak}
\alpha>\frac{1}{\dim_H\mu}.
\end{equation}
We say that  $u\precsim v$ to express that there exists a constant $C>0$ (depending only
on the dynamical system) such that $u\le C v$. We also write $u \approx v$ when $u\precsim v$ and $v\precsim u$.
\section{Poisson law for the number of entrance to balls}

For any $x\in \M$ and $r>0$, we write $B(x,r)$ for the 
ball of center $x$ and radius $r$ for the Riemannian distance on $\M$.
The main result of the paper states that for typical centers $x$, 
the time spent into the ball $B(x,r)$, up to time $t/\mu(B(x,r))$, follows asymptotically the Poisson law with mean $t$. 

We assume that, for $\mu$-almost every $x\in \M$, there exists $\delta\in(1,\alpha \dim_H\mu)$ such that
\begin{equation}\label{couronne}
\mu(B(x,r+r^\delta)\setminus B(x,r))= o(\mu(B(x,r)))
\end{equation}
(see appendix for discussion on (\ref{couronne})).
\begin{theorem}\label{THM}
Let $(\M,f,\mu)$ be as above. 
For $\mu$-a.e. $x\in \M$ such that \eqref{couronne} holds, 
\begin{equation}\label{eq:cvpoisson}
\lim_{r\to0} \mu \left(\left\{ y\in \M\colon \sum_{j=0}^{\lfloor t /\mu(B(x,r))\rfloor}
\mathbf{1}_{B(x,r)}(f^jy)=k\right\}\right) = \frac{t^k}{k!}e^{-t}
\end{equation}
for any $k\ge0$ and any $t\ge0$.
\end{theorem}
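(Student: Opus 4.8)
The plan is to use the standard approach for proving convergence to a Poisson law via the method of moments or, equivalently, via the Chen--Stein method, after reducing the problem to a sequence of well-separated events. Fix a $\mu$-typical center $x$ for which \eqref{couronne} holds, and write $B_r=B(x,r)$, $p_r=\mu(B_r)$, $T_r=\lfloor t/p_r\rfloor$. The target is to show that the sum $S_r=\sum_{j=0}^{T_r}\mathbf 1_{B_r}\circ f^j$ converges in distribution to a Poisson$(t)$ random variable under $\mu$. The first step is to discard short returns: using the recurrence-rate estimate quoted in the introduction, namely that the first self-return time to $B_r$ is $\approx r^{-\dim_H\mu}$ for $\mu$-a.e.\ $x$, one shows that the contribution of pairs of visits $j<j'$ with $j'-j$ small (say $\le g_r$ for a suitable gap $g_r\to\infty$ that is still $o(1/p_r)$) is negligible. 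This is precisely the simplification emphasized in the introduction: clustering cannot occur because orbits that return to a small ball do so only after a time comparable to $p_r^{-1}$, which is much larger than the block length we will use.

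The second step is to block the time interval $\{0,\dots,T_r\}$ into roughly $t/(p_r\ell_r)$ blocks of length $\ell_r$, with $1\ll \ell_r \ll g_r \ll 1/p_r$, and to approximate $S_r$ by the number $W_r$ of blocks that contain at least one visit. By the short-return control, within a single block there is $\mu$-a.s.\ at most one visit up to negligible error, so $S_r$ and $W_r$ agree with high probability, and $W_r$ is a sum of $\{0,1\}$-valued random variables $\xi_i=\mathbf 1\{\text{block }i\text{ is hit}\}$. Applying the Chen--Stein bound for sums of dependent indicators, the total variation distance between $\mathrm{Law}(W_r)$ and Poisson$(\lambda_r)$, with $\lambda_r=\sum_i\mu(\xi_i=1)$, is controlled by three quantities: $b_1=\sum_i \mu(\xi_i=1)^2$, which is $O(\ell_r p_r)\to0$; $b_2=\sum_{i\ne i',\,i'\text{ near }i}\mu(\xi_i=1,\xi_{i'}=1)$, which is handled again by the short-return estimate; and $b_3$, a long-range mixing term $\sum_i \mathbb E|\mathbb E(\xi_i\mid \mathcal F_{\text{far}})-\mu(\xi_i=1)|$ measuring the dependence of a block on the distant past. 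The term $\lambda_r\to t$ because $\lambda_r=\sum_i\mu(\xi_i=1)$ differs from $p_r T_r\to t$ by the short-return correction.

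The third step — and the main obstacle — is the decorrelation estimate needed to bound $b_3$. Indicator functions of balls are not regular observables in the tower's natural Banach space, so one cannot apply the polynomial decay of correlations directly; instead one must approximate $\mathbf 1_{B_r}$ from inside and outside by functions that are constant on the stable foliation and H\"older along unstables, using the Markov structure (P1)--(P5). The approximation error is exactly the measure of a thin annulus around $\partial B_r$, which is where hypothesis \eqref{couronne} enters: the annulus of width $r^\delta$ has measure $o(p_r)$, and with $\delta\in(1,\alpha\dim_H\mu)$ one can afford to take the approximation scale $r^\delta$ while keeping the induced dynamical error (governed by the polynomial contraction rates in (P2)--(P3) and the polynomial tail \eqref{poly}) summable; the condition \eqref{eq:deltak} that $\alpha>1/\dim_H\mu$ is what makes the window $(1,\alpha\dim_H\mu)$ nonempty and compatible with the recurrence scale $r^{-\dim_H\mu}$. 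Carrying out this approximation carefully, feeding the approximants into the polynomial decay-of-correlations estimate for the tower, and checking that the resulting error, summed over the $\approx t/(p_r\ell_r)$ blocks, still tends to zero after optimizing $\ell_r$ and $g_r$ against $r$, is the technical heart of the argument. Once $b_1,b_2,b_3\to0$ and $\lambda_r\to t$ are established, the Chen--Stein bound gives $W_r\Rightarrow\mathrm{Poisson}(t)$, and hence $S_r\Rightarrow\mathrm{Poisson}(t)$, which is \eqref{eq:cvpoisson}.
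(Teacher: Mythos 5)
Your proposal is essentially the paper's proof: both rest on a Chen--Stein type Poisson approximation (the paper uses the Chazottes--Collet version, Theorem~\ref{thm:poisson}, whose error terms $R_1,R_2,R_3$ correspond in spirit to your $b_3,b_2,b_1$), eliminate short returns via the recurrence-rate Theorem~\ref{thm:recrate}, and control the long-range mixing term by approximating the ball indicator with unions of Markov cylinders $\mathcal Q_k$, the corona condition~\eqref{couronne} controlling the approximation error. One subtlety you gloss over: the recurrence rate $\rec(x)=\dim_H\mu$ concerns returns of the center $x$, and upgrading this to a bound on returns of typical $y\in B(x,r)$ --- which is what $R_2$ requires --- uses the Lebesgue density-point argument of Proposition~\ref{pro:R2}, a genuine (if standard) intermediate step.
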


The condition \eqref{couronne} on the coronas is always satisfied for a subsequence:
(see the appendix for details).
\begin{proposition}\label{pro:subseq}
Let $\theta<1$. For every $x\in\mathcal M$, there exists a sequence $r_n(x)\in(\theta^{n+1},\theta^{n})$ such that for any $\delta\in(1,\alpha \dim_H\mu)$, \eqref{couronne} holds for the sequence $r=r_n(x)$.
\end{proposition}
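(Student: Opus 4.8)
The plan is to select, in each octave $I_n:=(\theta^{n+1},\theta^n)$, one radius $r_n(x)$ by a pigeonhole argument that forces the corresponding corona to carry a vanishing fraction of $\mu(B(x,r_n(x)))$, and to arrange that a \emph{single} radius serves all admissible $\delta$ at once by letting the exponent used in the pigeonhole tend to $1$. We may assume $x\in\supp\mu$, since otherwise both sides of \eqref{couronne} vanish for all small $r$ and any $r_n(x)\in I_n$ works. Fix once and for all a sequence $\delta_n\downarrow1$ with $n(\delta_n-1)\to\infty$ (for instance $\delta_n=1+n^{-1/2}$), so that eventually $\delta_n<\alpha\dim_H\mu$. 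Choose a maximal finite family $\theta^{n+1}<s_1<\dots<s_{m_n}<\theta^n$ with $s_{i+1}\ge s_i+s_i^{\delta_n}$. Since $r^{\delta_n}\le\theta^{n\delta_n}=\theta^n\,\theta^{n(\delta_n-1)}=o(\theta^n)$ uniformly for $r\in I_n$, the coronas $B(x,s_i+s_i^{\delta_n})\setminus B(x,s_i)$, $1\le i<m_n$, are pairwise disjoint; comparing the gaps (all $O(\theta^{n\delta_n})$, by maximality) with $|I_n|\approx\theta^n$ gives $\theta^n\precsim m_n\,\theta^{n\delta_n}$, i.e. $\theta^{-n(\delta_n-1)}\precsim m_n$, hence $m_n\to\infty$.

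Write $b_i:=\mu(B(x,s_i))>0$; this is nondecreasing in $i$, so the product telescopes:
\[
\prod_{i=1}^{m_n-1}\frac{b_{i+1}}{b_i}=\frac{b_{m_n}}{b_1}\le\frac{\mu(B(x,\theta^n))}{\mu(B(x,\theta^{n+1}))}.
\]
Taking logarithms, there is an index $i=i(n)$ with $\log(b_{i+1}/b_i)\le\eta_n$, where $\eta_n:=\frac{1}{m_n-1}\log\frac{\mu(B(x,\theta^n))}{\mu(B(x,\theta^{n+1}))}\ge0$. Set $r_n(x):=s_{i(n)}\in I_n$. Since $r_n(x)+r_n(x)^{\delta_n}=s_{i(n)}+s_{i(n)}^{\delta_n}\le s_{i(n)+1}$, the corona attached to the exponent $\delta_n$ obeys
\[
\mu\big(B(x,r_n(x)+r_n(x)^{\delta_n})\setminus B(x,r_n(x))\big)\le b_{i(n)+1}-b_{i(n)}\le\big(e^{\eta_n}-1\big)\,\mu\big(B(x,r_n(x))\big).
\]

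For a fixed $\delta\in(1,\alpha\dim_H\mu)$ one has $\delta_n<\delta$ for all large $n$, hence $r_n(x)^\delta\le r_n(x)^{\delta_n}$ and the corona for $\delta$ is contained in the one for $\delta_n$; thus the displayed estimate shows that \eqref{couronne} holds along $(r_n(x))_n$ for every admissible $\delta$ simultaneously, \emph{provided} $\eta_n\to0$. This last point — equivalently $\log\mu(B(x,\theta^n))-\log\mu(B(x,\theta^{n+1}))=o(m_n)$ — is the main obstacle: it is a very weak lower regularity of $\mu$ at $x$, and it can fail only if $\mu(B(x,\theta^n))/\mu(B(x,\theta^{n+1}))$ grows faster than $e^{m_n}$, i.e. if $-\log\mu(B(x,\rho))$ grows too fast as $\rho\to0$. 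For the SRB measure under consideration this is excluded by its dimensional regularity (finite lower pointwise dimension, ultimately a consequence of the Gibbs--Markov--Young structure and of Ledrappier--Young theory for hyperbolic measures); one may moreover let $\delta_n\downarrow1$ more slowly so that $m_n$ grows at any prescribed sub-exponential rate, giving room to spare. We carry this out, together with the way \eqref{eq:deltak} intervenes, in the appendix; note finally that, all the bounds above being monotone in the exponent, the single sequence $(r_n(x))_n$ produced this way indeed validates \eqref{couronne} for every $\delta\in(1,\alpha\dim_H\mu)$.
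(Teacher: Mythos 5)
Your approach is genuinely different from the paper's and has a real gap at the key step.

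The paper's argument (Proposition \ref{pro:subseq2} in the appendix) constructs $r_n(x)$ by a dichotomy: at stage $k$ it looks at two candidate sub-intervals of the current interval and keeps the one carrying less $\mu$-mass, so that the measure decays at least like $2^{-k}$ purely by construction. This yields, \emph{for every $x\in\M$ and with no regularity hypothesis on $\mu$}, the explicit polynomial estimate \eqref{couronne2}, i.e. $\mu(B(x,r_n+s)\setminus B(x,r_n))\precsim (s/r_n)^a\,\mu(B(x,2r_n))$ for all $s<r_n$ simultaneously; the regularity of $\mu$ (the pointwise dimension, via \eqref{eq:dim}, hence for $\mu$-a.e. $x$ only) is used only afterwards, to pass from \eqref{couronne2} to \eqref{couronne}.

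Your pigeonhole argument is attractive but relies, in the middle of the construction, on $\eta_n=\frac{1}{m_n-1}\log\frac{\mu(B(x,\theta^n))}{\mu(B(x,\theta^{n+1}))}\to0$. You acknowledge this as ``the main obstacle'' and defer it to an unwritten appendix, so the proof as written is incomplete. Moreover, the missing step cannot be filled for \emph{every} $x$, which is what the proposition asserts: if $\mu(B(x,r))$ decays superpolynomially at some $x\in\supp\mu$ (say $\sim e^{-1/r}$), then $\log\frac{\mu(B(x,\theta^n))}{\mu(B(x,\theta^{n+1}))}\approx(\theta^{-1}-1)\theta^{-n}$, while $m_n\precsim\theta^{-n(\delta_n-1)}$ with $\delta_n\to1$, so $\eta_n\to\infty$ and the bound $(e^{\eta_n}-1)\mu(B(x,r_n))$ is useless. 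The dichotomy sidesteps exactly this: its decay factor $2^{-k}$ is combinatorial, not tied to the smallness of $\log\mu(B(x,\theta^n))-\log\mu(B(x,\theta^{n+1}))$.

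If you content yourself with $\mu$-a.e. $x$ and invoke \eqref{eq:dim} to get $\log\mu(B(x,\theta^n))-\log\mu(B(x,\theta^{n+1}))=o(n)$, then with $\delta_n\downarrow1$ slowly enough (so that $m_n\approx\theta^{-n(\delta_n-1)}$ grows superpolynomially) your construction does close, and it is a nice, more elementary alternative. But be explicit that this proves only a $\mu$-a.e. version and gives only a qualitative $o(\mu(B(x,r_n)))$, whereas the dichotomy gives the quantitative bound \eqref{couronne2} with uniform constants for every $x$.
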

A quick look at the proof of Theorem \ref{THM} (especially Proposition~\ref{propR1}) shows that the convergence \eqref{eq:cvpoisson} along the subsequence $r_n(x)$ still holds in this case, provided the other assumptions are satisfied.

\subsection{An abstract Poisson approximation result}

We use the method developed by Chazottes and Collet, Theorem 2.1 in \cite{cc}.
We recall that, for any probability measures $P$ and $Q$ on a same measurable space $(E,\mathcal E)$, the total variation distance
$d_{TV}(P,Q)$ between $P$ and $Q$ is
$$d_{TV}(P,Q)=\sup_{A\in\mathcal E}P(A)-Q(A). $$
If $Y$ and $Z$ are random variables taking integer values, the total variation distance between their law 
is written $d_{TV}(Y,Z)$ (with a small abuse of notation) and is given by 
\[
d_{TV}(Y,Z) = \frac12 \sum_{k=0}^\infty |\PP(Y=k)-\PP(Z=k)|.
\]
By $\mathrm{Poisson}(\lambda)$  we denote a Poisson random variable with mean $\lambda>0$, namely
\[
\PP(\mathrm{Poisson}(\lambda)=k) = \frac{\lambda^k}{k!}e^{-k}.
\]

\begin{theorem}[\cite{cc}]\label{thm:poisson}
Let $(X_n)_{n\in\NN}$ be a stationary $\{0,1\}$-valued process and $\eps:=\PP(X_1=1)$.
Set $S_i^j=X_i+\cdots+X_j$ for any $0\le i\le j$.
Then, for all positive integers $p,M,N$ such that $M\le N-1$, and $2\le p<N$, one has
\[
d_{TV}(X_1+\cdots+X_N,\mathrm{Poisson}(N\eps)) \le R(\eps,N,p,M)
\]
with
\[
R(\eps,N,p,M) = 2NM[ R_1(\eps,N,p)+R_2(\eps,p)]+R_3(\eps,N,p,M)
\]
where
\[
\begin{split}
R_1(\eps,N,p) &:= \sup_{{}^{0\le j\le N-p}_{0\le q\le N-j-p}}\left|\PP({X_1=1}, {S_{p+1}^{N-j}=q})-\eps\PP({S_{p+1}^{N-j}=q})\right| \\
R_2(\eps,p) &:= \PP(X_1=1, S_{2}^{p}\ge 1) \\
R_3(\eps,N,p,M) &:= 4\left( Mp\eps(1+N\eps)+\frac{(\eps N)^M}{M!}e^{-N\eps}+N\eps^2\right).
\end{split}
\]
\end{theorem}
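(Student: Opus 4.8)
The plan is to run a Poisson approximation in the spirit of the Chen--Stein method, working directly with the weights $q_k:=\PP(W=k)$, where $W:=X_1+\cdots+X_N=S_1^N$. Since
\[
d_{TV}(W,\mathrm{Poisson}(N\eps))=\tfrac12\sum_{k\ge0}\bigl|q_k-\pi_k\bigr|,\qquad \pi_k:=\PP(\mathrm{Poisson}(N\eps)=k),
\]
and the Poisson weights obey the exact recursion $k\pi_k=N\eps\,\pi_{k-1}$, it suffices to establish an \emph{approximate} recursion $k q_k=N\eps\,q_{k-1}+\mathcal{E}_k$ with $\sum_{1\le k\le M-1}|\mathcal{E}_k|$ dominated by the right-hand side of the theorem, and then to transfer this into a bound on $\sum_k|q_k-\pi_k|$ by induction on $k\le M$; the range $k\ge M$ is absorbed using the Poisson tail (controlled in the relevant regime by the term $\frac{(N\eps)^M}{M!}e^{-N\eps}$ of $R_3$) together with a Markov bound $\sum_{k\ge M}q_k\le N\eps/M$.

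The heart of the matter is the approximate recursion. As each $X_j$ is $\{0,1\}$-valued, $k q_k=\sum_{j=1}^N\PP\bigl(X_j=1,\ W-X_j=k-1\bigr)$ exactly. For each $j$ one splits $W-X_j$ into a window $M_j:=S_{(j-p)\vee1}^{j-1}+S_{j+1}^{(j+p)\wedge N}$ of radius $p$ about $j$ and a ``far'' part $F_j:=W-X_j-M_j$. Two approximations are made in the term $\PP(X_j=1,\,W-X_j=k-1)$: first, replacing $W-X_j$ by $F_j$, which on $\{X_j=1\}$ costs at most $\PP(X_j=1,\,M_j\ge1)\le 2\,\PP(X_1=1,\,S_2^p\ge1)=2R_2$ by stationarity (one copy of $R_2$ for each side of the window); second, decorrelating $X_j$ from $F_j$ across the gap of length $p$, which --- after reducing to the one-sided quantities $\PP(X_1=1,\,S_{p+1}^{N-j'}=q)$ by stationarity --- costs at most $R_1(\eps,N,p)$, the two suprema in $R_1$ absorbing the varying block length $N-j'$ and the varying level $q$ of the partial sum as $j$ and $k$ range. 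Reversing these approximations on the target $\eps\,q_{k-1}$ produces $O(R_2)$ and $O(\eps)$ more per position. Summing over $j=1,\dots,N$ and retaining only the $O(M)$ relevant values $k\le M$ gives the leading error $2NM[R_1+R_2]$, while the lower-order leftovers --- swapping $\PP(W-X_j=m)$ for $\PP(W=m)$ (the $N\eps^2$ term), the two boundary windows with $j\le p$ or $j>N-p$ (the $Mp\eps(1+N\eps)$ term), and the diagonal corrections --- add up to $R_3(\eps,N,p,M)$.

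I expect the main obstacle to be not a single hard estimate but the disciplined bookkeeping of the error terms. The delicate point is to route the clustering error through the \emph{probability} $\PP(X_1=1,\,S_2^p\ge1)$ rather than through the larger expectation $\sum_{i=2}^p\PP(X_1=1,X_i=1)$: this forces one to carry the window correction at the level of the events $\{M_j\ge1\}$, and to perform the truncation at $M$ visits \emph{before} the decorrelation step, so that only $O(M)$ values of $k$ enter the count and no spurious extra factor of $N$ appears. A secondary, purely technical, nuisance is the alignment of indices after the stationarity shifts (including the two boundary windows), which must be handled carefully enough that the final bound is uniform over $0\le j\le N-p$ and $0\le q\le N-j-p$, exactly as in the definition of $R_1$. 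Once the approximate recursion is available, the passage to $\sum_k|q_k-\pi_k|$ is the classical Poisson-approximation induction: writing $a_k:=q_k-\pi_k$ one has $k a_k=N\eps\,a_{k-1}+\mathcal{E}_k$, and summing this --- using $N\eps/k\le1$ when $k>N\eps$ and $\sum_{k\le N\eps}(N\eps)^k/k!\le e^{N\eps}$ otherwise --- yields $\tfrac12\sum_k|a_k|\le R(\eps,N,p,M)$.
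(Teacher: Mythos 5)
The paper does not prove this statement; it is quoted as Theorem~2.1 of Chazottes--Collet \cite{cc}, with the single modification that $R_2$ is a probability $\PP(X_1=1,\,S_2^p\ge1)$ rather than the expectation $\sum_{i=2}^p\PP(X_1=1,X_i=1)$, a change that the authors note goes through by inspection of the proof in \cite{cc}. That proof does \emph{not} use a Chen--Stein identity. Instead it compares the law of $S_1^N$, truncated at $M$ visits, to that of a sum of $N$ i.i.d.\ Bernoulli$(\eps)$ variables by an inductive decomposition over blocks of length $p$ (removing one ``visit'' at a time, incurring an $R_1$ error per mixing step and an $R_2$ error per short-return window, hence the $2NM[R_1+R_2]$), and then invokes the elementary Le Cam bound $d_{TV}(\mathrm{Bin}(N,\eps),\mathrm{Poisson}(N\eps))\le N\eps^2$, which is where the $N\eps^2$ in $R_3$ comes from; the Poisson tail term $\frac{(N\eps)^M}{M!}e^{-N\eps}$ accounts for the truncation. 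Your route through the approximate Stein recursion $kq_k=N\eps\,q_{k-1}+\mathcal E_k$ is thus genuinely different in its organizing identity.

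The decomposition in your steps (split $W-X_j$ into a near window $M_j$ and a far part $F_j$, bound the window correction by $\PP(X_j=1,\,M_j\ge1)\le 2R_2$, decorrelate across the gap to get $R_1$) is plausible and does realize the ``probability, not expectation'' form of $R_2$ that you correctly flag as the delicate point. The genuine gap is the last step, the transfer from the approximate recursion to the total variation bound, which you describe as ``the classical Poisson-approximation induction.'' No such induction gives the stated bound. Iterating $a_k=\frac{N\eps}{k}a_{k-1}+\frac{\mathcal E_k}{k}$ yields
\[
a_k=\frac{(N\eps)^k}{k!}a_0+\sum_{m=1}^{k}\frac{(N\eps)^{k-m}(m-1)!}{k!}\,\mathcal E_m ,
\]
and the boundary term $a_0=q_0-e^{-N\eps}$ is exactly the quantity one is trying to estimate; bounding it by $1$ produces $\sum_k\frac{(N\eps)^k}{k!}|a_0|\le e^{N\eps}$, an unacceptable factor. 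Using the normalization $\sum_k a_k=0$ instead leads to weights of size $\frac{1}{m\pi_m}$, which again blow up like $e^{N\eps}$ for small $m$. The actual Chen--Stein method avoids this by solving the Stein equation and exploiting the uniform bounds on the solution and its increments, but that produces an error of AGG type (no factor $M$, no Poisson-tail term, and $b_2$ as a sum rather than a probability), i.e.\ a bound of a different shape than the one you need to reproduce. Your phrase ``using $N\eps/k\le1$ when $k>N\eps$ and $\sum_{k\le N\eps}(N\eps)^k/k!\le e^{N\eps}$ otherwise'' does not resolve this: for $k\le N\eps$ the amplification factor $N\eps/k$ exceeds $1$ and the accumulated $(N\eps)^k/k!$ is precisely what must be controlled, not merely acknowledged. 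Likewise the Markov bound $\sum_{k\ge M}q_k\le N\eps/M$ is not dominated by any term of $R_3$; the $\frac{(N\eps)^M}{M!}e^{-N\eps}$ in $R_3$ bounds only the Poisson tail $\sum_{k\ge M}\pi_k$, not the tail of $q$. To obtain the correct tail bound on $q_k$ one needs the recursion for $k\ge M$ as well, or a comparison to the Binomial as in \cite{cc}. As written, the transfer step is a genuine missing piece and the proof does not go through.
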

The definition of $R_2$ given here differs slightly from the original statement, but
an eye to the proof shows that the arguments go trough with this tiny change.
However, this modification plays an essential role in the present paper since it allows a 
very efficient estimate of the term $R_2$.

\subsection{Proof of the main theorem}

For a fixed $x\in \M$, we will apply Theorem~\ref{thm:poisson} to the processes defined for $y\in \M$ by 
$X_{n+1}(y)=\mathbf{1}_{B(x,r)}(f^{n}y)$, $n\in\NN$, $r>0$. 
For any integers $p\le q$, we write $S_p^q(x,r)$ for the number of visits to $B(x,r)$ of the orbit of $f$ 
between times $p$ and $q$, i.e.
$$\forall y\in \M,\ \ S_p^q(x,r)(y):=\#\{\ell=p,...,q\ :\ f^\ell(y)\in B(x,r)\}. $$

We denote the error terms $R_1$ and $R_2$ for this process by 
$\R_1(x,r, N,p)$ and $\R_2(x,r,p)$. 
That is
\[
\begin{split}
\R_1(x,r,N,p)
&=\displaystyle\sup_{0\le j\le j+q\le N-p} \left|\Cov_{\mu}(\mathbf 1_{B(x,r)},\mathbf 1_{\{S_{p+1}^{N-j}(x,r)=q\}})\right|, \\
\R_2(x,r,p) 
&= \mu(\{y\in B(x,r)\colon S_2^p(x,r)\ge1\}).
\end{split}
\]

\begin{proof}[Proof of Theorem~\ref{THM}] Let $t>0$.
Let $\sigma<d:=\dim_H\mu$ be such that $\sigma\alpha > \delta$ (with $\delta$ as in (\ref{couronne})).
For $\mu$-a.e. $x$, due to \eqref{eq:deltak}, by Proposition~\ref{propR1},
we have with $p_r=\lfloor r^{-\sigma}\rfloor$ and $N_r=\lfloor t/\mu(B(x,r))\rfloor$,
\[
\begin{split}
\R_1(x,r,N_r,p_r) &\le C'_0
\mu(B(x,r+r^\delta))p_r^{-\zeta+1}+ \mu(\mathcal C_{r,r^\delta}(x)) (1+N_r\mu(B(x,r+r^\delta)))\\
&= O(\mu(B(x,r)))r^{\sigma(\zeta-1)}+ o(\mu(B(x,r)))(1+t+o(1))\\
&=o(\mu(B(x,r)),
\end{split}
\]
due to (\ref{couronne}) and since $\zeta>1$ (see \eqref{poly}).
By Proposition~\ref{pro:R2} we have
\[
\R_2(x,r,p_r) = o(\mu(B(x,r))).
\]
The conclusion follows from Theorem~\ref{thm:poisson} by taking $M_r\to\infty$ sufficiently slowly.
\end{proof}
\section{Correlation estimates on towers}
We define the tower $\Delta$ and the map $F:\Delta\rightarrow\Delta$ as follow 
$$\Delta:=\{(x,\ell)\in\Lambda\times\mathbb Z\ :\ 0\le \ell<R(x)\},$$
$F(x,\ell)=(x,\ell+1)$ if $\ell<R(x)-1$ and $F(x,R(x)-1)=(f^R(x),0)$.
We define a family of partitions $(\mathcal Q_k)_{k\ge 0}$ by
$$\mathcal Q_0:=\{\Lambda_i\times\{\ell\},\ \ i\ge 1,\ \ell<R_i\} \ \
\mbox{and}\ \ \forall k\ge 1,\ \mathcal Q_k:=\bigvee_{i=0}^kF^{-i}\mathcal Q_0.$$
Now we define the projection $\pi:\Delta\rightarrow \M$ by
$\pi(x,\ell):=f^\ell(x)$.
Observe that $f\circ\pi=\pi\circ F$.
Due to Lemma 3.1 in \cite{aa}, there exists $C_2>0$ such that, for every non negative integer $k$
and every $Q\in \mathcal Q_{2k}$, we have
\begin{equation}\label{diametre}
\diam(\pi F^k(Q))\le C_2k^{-\alpha}. 
\end{equation}
Now we consider the quotient tower $\bar\Delta:=\Delta/\sim$
with $(x,\ell)\sim(y,\ell')$ if $\ell=\ell'$ and if $x$ and $y$
are on a same $\gamma^s\in\Gamma^s$. We define $\bar\pi:\Delta\rightarrow
\bar\Delta$ as the canonical projection and the map $\bar F:\bar\Delta\rightarrow\bar\Delta$ such that $\bar F\circ\bar\pi=\bar\pi\circ F$.
We define $\bar{\mathcal Q}_k$ as the projection of the partition
$\mathcal Q_k$ on $\bar\Delta$.
It will be also useful to consider the separation time $\bar s:\bar\Delta\times\bar\Delta\rightarrow\mathbb Z_+\cup\{\infty\}$  as follows:
$$ \bar s((x,\ell),(y,\ell'))=s(x,y) \ \mbox{if}\ \ell=\ell';\ 
  \bar s((x,\ell),(y,\ell'))=0 \mbox{ if }\ell\ne\ell'.$$
We fix a $\hat\gamma\in\Gamma^u$ and consider the measure $\bar m$ on $\bar\Delta$
such that the measure on $\bar\Delta_\ell$ corresponds to $\Leb_{\hat\gamma}$.
Recall that there exist probability measures $\mu$, $\nu$ and $\bar\nu$
on $\M$, $\Delta$ and $\bar\Delta$ respectively such that
$$f_*\mu=\mu,\ F_*\nu=\nu,\ \bar F_*\bar\nu=\bar\nu,\ \bar\nu=\bar\pi_*\nu,\
   \mu=\pi_*\nu $$
and such that
\begin{itemize}
\item the measure $\bar\nu$ admits a density function $\bar\rho$ with respect
to $\bar m$;
\item For every $c'>0$, there exists $C'$ such that, 
for every probability measure $\bar\lambda$ absolutely continuous
with respect to $\bar m$, with density probability $\varphi:\bar\Delta\rightarrow\mathbb R_+$ satisfying
$$\forall \bar Q\in \bar{\mathcal Q}_0,\ 
\forall x,y\in\bar Q,\ \ |\varphi(x)-\varphi(y)|\le c'|\varphi(x)|\beta^{\bar s(x,y)},$$
we have
\begin{equation}\label{VariationTotale}
\forall n\ge 1,\ \ d_{TV}(\bar F_*^n\bar\lambda,\bar\nu)
\le C' n^{-\zeta+1}\end{equation}
(see Theorem 3.5 in \cite{aa}).
\end{itemize}
This leads to the following decorrelation result at the core of our estimates.
\begin{lemma}\label{LEM0}
There exists $C'>0$ such that, for every non negative integers $k,n$
with $n\ge 2k$, for every $A$ union of atoms of $\mathcal Q_k$ and every
$B$ in $\sigma(\cup_{m\ge0}\mathcal Q_m)$, we have
$$|\Cov_{\nu}(\mathbf 1_A,\mathbf 1_{B}\circ F^n)|\le C' n^{-\zeta+1}\nu(A) .$$
\end{lemma}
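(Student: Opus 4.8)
The plan is to deduce the covariance bound from the total variation estimate \eqref{VariationTotale} by passing from the tower $\Delta$ to the quotient tower $\bar\Delta$. First I would reduce to the case $\nu(A)>0$ (otherwise there is nothing to prove) and observe that since $A$ is a union of atoms of $\mathcal Q_k$, the conditional measure $\nu_A:=\nu(\cdot\mid A)$ is still, after projection by $\bar\pi$, a measure on $\bar\Delta$ that is absolutely continuous with respect to $\bar m$ with a density satisfying the required regularity across atoms of $\bar{\mathcal Q}_0$ with a uniform constant $c'$; this uses the bounded distortion hypotheses (P4) and the regularity of the stable foliation (P5), together with the fact that $\bar\rho$ itself has this regularity. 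The point is that on each atom of $\mathcal Q_k$ the density of $\nu$ with respect to $\bar m$ (after the natural identifications) varies by a factor controlled by $\beta^{\bar s}$, and conditioning on a union of such atoms and renormalising preserves this, possibly with a larger but still uniform constant.

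Next I would write the covariance in terms of the pushforward. Since $B\in\sigma(\cup_{m\ge0}\mathcal Q_m)$ and, crucially, such $B$ is a union of "vertical" fibres in the sense that $\mathbf 1_B$ descends to $\bar\Delta$ — more precisely, because the partitions $\mathcal Q_m$ refine the stable equivalence relation in the appropriate sense, $\mathbf 1_B\circ F^n$ composed with sets in the distant future can be read on the quotient — we have
\[
\Cov_\nu(\mathbf 1_A,\mathbf 1_B\circ F^n)=\nu(A)\big(\nu_A(F^{-n}B)-\nu(F^{-n}B)\big)
=\nu(A)\big((\bar F^n_*\bar\pi_*\nu_A)(\bar B)-\bar\nu(\bar B)\big),
\]
where $\bar B$ is the image of $B$ on $\bar\Delta$, at least after composing with a further iterate to absorb the dependence on stable coordinates; here one uses $\bar F\circ\bar\pi=\bar\pi\circ F$ and $\bar\nu=\bar\pi_*\nu$. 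The right-hand side is bounded in absolute value by $\nu(A)\, d_{TV}(\bar F^n_*\bar\pi_*\nu_A,\bar\nu)$, and applying \eqref{VariationTotale} to the probability measure $\bar\lambda:=\bar\pi_*\nu_A$ gives the bound $C'n^{-\zeta+1}\nu(A)$. The hypothesis $n\ge 2k$ is what guarantees that after $n$ steps the atoms of $\mathcal Q_k$ have been spread out enough (via \eqref{diametre}-type control) for the quotient dynamics to see a genuine density of the required regularity class — equivalently, it ensures the renormalised conditional density on $\bar\Delta$ has the regularity needed to invoke \eqref{VariationTotale}.

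The main obstacle I expect is the verification that $\bar\pi_*\nu_A$ has a density with respect to $\bar m$ lying in the admissible regularity class of \eqref{VariationTotale}, with a constant $c'$ that does \emph{not} depend on $A$, $k$ or $n$. This is where the condition $n\ge 2k$ and the diameter control \eqref{diametre} enter: conditioning on an atom of $\mathcal Q_k$ localises in the unstable direction, and one must check that pushing forward and using the bounded-distortion bounds (P4), (P5)(b) tames the density variation uniformly. A secondary technical point is the measurability bookkeeping: ensuring that $\mathbf 1_B\circ F^n$ really is constant on stable fibres so that it descends to $\bar\Delta$; this should follow from the structure of the partitions $\mathcal Q_m$ and the definition of $\sim$, but it needs to be spelled out carefully, possibly by first treating $B\in\mathcal Q_m$ for finite $m$ and then passing to the generated $\sigma$-algebra by a monotone class argument.
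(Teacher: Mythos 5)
Your overall strategy is the right one — project to the quotient tower $\bar\Delta$, condition on $A$, and invoke the total-variation estimate \eqref{VariationTotale} — but there is a genuine gap at the crucial regularity step, and the role of the hypothesis $n\ge 2k$ is misidentified.

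The gap: you assert upfront that $\bar\pi_*\nu_A$ is absolutely continuous with respect to $\bar m$ with a density in the admissible class of \eqref{VariationTotale}. This is false for $k\ge 1$. The density of $\bar\pi_*\nu_A$ is $\bar\rho\,\mathbf 1_{\bar A}/\bar\nu(\bar A)$, and the required regularity is a pointwise H\"older-type bound \emph{within each atom of $\bar{\mathcal Q}_0$}. Since $A$ is a union of atoms of $\mathcal Q_k$, which strictly refines $\mathcal Q_0$, the indicator $\mathbf 1_{\bar A}$ jumps from a positive value to $0$ inside atoms of $\bar{\mathcal Q}_0$; at such a jump the condition $|\varphi(x)-\varphi(y)|\le c'|\varphi(x)|\beta^{\bar s(x,y)}$ would force $1\le c'\beta^{\bar s(x,y)}$, which fails for all fixed $c'$ once $k$ is large enough. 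So you cannot apply \eqref{VariationTotale} to $\bar\lambda=\bar\pi_*\nu_A$ over all $n$ iterates.

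The fix (and the actual role of $n\ge 2k$): split $\bar F^n=\bar F^{n-k}\circ\bar F^k$. Because $\mathcal Q_k=\bigvee_{i=0}^k F^{-i}\mathcal Q_0$, the map $F^k$ sends each atom of $\mathcal Q_k$ into an atom of $\mathcal Q_0$, so the density $\phi$ of $\bar F_*^k\bar\lambda$ has its discontinuities aligned with $\bar{\mathcal Q}_0$-boundaries; the bounded-distortion estimates (this is precisely Lemma~4.5 in \cite{aa} / inequality (20) in \cite{AlvesPinheiro}) then show $\phi$ is in the admissible class with a \emph{uniform} constant $c'$. Only then can one apply \eqref{VariationTotale} to the remaining $n-k$ iterates, and $n\ge 2k$ guarantees $n-k\ge n/2$, so $(n-k)^{-\zeta+1}\precsim n^{-\zeta+1}$. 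Your reference to \eqref{diametre} as the mechanism is not the right one — \eqref{diametre} controls the diameter of $\pi F^k(Q)$ in the manifold and is used elsewhere; the regularization of the density is a purely tower-theoretic bounded-distortion fact about $F^k(\mathcal Q_k)\subset\mathcal Q_0$.

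Your secondary remark about $B$ descending to the quotient is fine but over-engineered: every atom of $\mathcal Q_m$ is saturated for $\sim$ (each $\Lambda_i$ is a union of stable leaves, and $F^{-1}$ preserves saturation since $F$ maps stable leaves into stable leaves), so $B=\bar\pi^{-1}\bar B$ directly; no monotone-class argument or ``further iterate to absorb the stable dependence'' is needed.
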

\begin{proof}
Notice that $A=\bar\pi^{-1}\bar A$ and $B=\bar\pi^{-1}\bar B$ with
$\bar A:=\bar\pi A$ and $\bar B:=\bar\pi B$.
Therefore, we have
\begin{eqnarray*}
\left|\Cov_{\nu}(\mathbf 1_{A},\mathbf 1_{B}\circ F^{n})\right|
&=&\left|\Cov_{\bar\nu}(\mathbf 1_{\bar A},\mathbf 1_{\bar B}\circ 
\bar F^{n}))\right|\\
&=&\left|\mathbb E_{\bar m}\left[\bar\rho(\mathbf 1_{\bar A}-\bar\nu(\bar A))\mathbf 1_{\bar B}\circ \bar F^{n}\right]\right|\\
&=&\bar\nu(\bar A)\left|\mathbb E_{\bar m}\left[(\phi-\bar \rho)\mathbf 1_{\bar B}\circ \bar F^{n-k}\right]\right|\\
&\le&\nu(A)d_{TV}(\bar F_*^{n-k}(\bar F_*^{k}\bar\lambda),\bar\nu),
\end{eqnarray*}
writing $\bar\lambda$
for the measure whose density with respect to $\bar m$ is $\bar\rho\mathbf 1_{\bar A}/\bar\nu(\bar A)$
and $\phi$ for the density function of $\bar F_*^{k}\bar\lambda$.
Now, due to (20) in \cite{AlvesPinheiro} (see also Lemma 4.5 in \cite{aa}), 
we observe that 
$$\forall \bar Q\in\bar {\mathcal Q}_0,\ \forall y,z\in \bar Q,\ \ 
|\phi(y)-\phi(z)|\le \phi(y) e^{C_{\bar\rho}+C_{\bar F}}(C_{\bar\rho}+C_{\bar F})\beta^{\bar s(y,z)}$$
and, due to (\ref{VariationTotale}) and to $n-k\ge n/2$, we obtain
\begin{equation}\label{EQ3}
\left|\Cov_{\nu}(\mathbf 1_{A},\mathbf 1_{B}\circ F^{n})\right|\le C' \nu(A) n^{-\zeta+1},
\end{equation}
for some $C'>0$.
\end{proof}
\section{Decorrelations between successive returns}\label{sec:R1}

The aim of this section is to estimate $\R_1$.

\begin{proposition}\label{propR1}
There exists $C'_0>0$ such that for any integer $p$  we have
 $$\R_1(x,r,N,p)
\le C'_0 \mu(B(x,r+s))p^{-\zeta+1}+\mu(\mathcal C_{r,s}(x))(1+N\mu(B(x,r+s)) ),$$
where $\mathcal C_{r,s}(x)$ is the corona $B(x,r+s)\setminus B(x,r)$ and $s=C_2{\lfloor p/4\rfloor}^{-\alpha}$.
\end{proposition}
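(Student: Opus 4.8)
The plan is to estimate $\R_1(x,r,N,p)$ by approximating the ball $B(x,r)$ from above and below by unions of atoms of the partition $\mathcal Q_k$ (pushed forward by $F^k$ and projected by $\pi$) so that Lemma~\ref{LEM0} becomes applicable, and then to control the error produced by this approximation using the corona estimate. Recall from \eqref{diametre} that atoms of $\mathcal Q_{2k}$ have $\pi F^k$-image of diameter at most $C_2 k^{-\alpha}$. So first I would fix $p$, set $k=\lfloor p/4\rfloor$ and $s=C_2 k^{-\alpha}$, and let $A^+$ be the union of all atoms $Q\in\mathcal Q_{2k}$ such that $\pi F^k(Q)\cap B(x,r)\ne\emptyset$, and $A^-$ the union of those $Q$ with $\pi F^k(Q)\subset B(x,r)$. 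By the diameter bound, $\pi F^k(A^-)\subset B(x,r)\subset \pi F^k(A^+)$ and $\pi F^k(A^+)\subset B(x,r+s)$. Lifting to the tower, $\mathbf 1_{B(x,r)}\circ f^{\,\cdot}$ composed with $\pi$ is sandwiched between lifts of indicators of these two sets; the point of the offset $F^k$ is exactly that the relevant set $A^\pm$ is then a union of atoms of $\mathcal Q_k$ (not $\mathcal Q_{2k}$), which is the hypothesis needed in Lemma~\ref{LEM0}.

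Next I would rewrite $\Cov_\mu(\mathbf 1_{B(x,r)},\mathbf 1_{\{S_{p+1}^{N-j}(x,r)=q\}})$ upstairs as a covariance with respect to $\nu$ via $\mu=\pi_*\nu$ and $f\circ\pi=\pi\circ F$. The first factor becomes (up to the sandwiching) $\mathbf 1_{\bar A^\pm}$ shifted by $F^k$, and the second factor is an event in $\sigma(\cup_{m\ge0}\mathcal Q_m)$ depending on coordinates from time $p+1$ onward; since $p+1 - k \ge k$ by the choice $k=\lfloor p/4\rfloor$, we are in the regime $n\ge 2k$ of Lemma~\ref{LEM0}, which yields a bound of the form $C'\nu(A^\pm)(p-k)^{-\zeta+1}\precsim \mu(B(x,r+s))p^{-\zeta+1}$ for the "decorrelated" part. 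The difference between the true indicator of $B(x,r)$ and either $A^+$ or $A^-$ is supported on the corona $\mathcal C_{r,s}(x)=B(x,r+s)\setminus B(x,r)$, and contributes two error terms: one of size $\mu(\mathcal C_{r,s}(x))$ coming from the first factor directly, and one of size $\mu(\mathcal C_{r,s}(x))\,\PP(\text{some further visit}) \le \mu(\mathcal C_{r,s}(x))\,N\mu(B(x,r+s))$ coming from replacing the event $\{S_{p+1}^{N-j}(x,r)=q\}$ correspondingly (or simply bounding $\Cov$ by the measure of the symmetric difference times one, plus a term accounting for the change in $q$). Collecting these gives precisely the claimed bound $C'_0\mu(B(x,r+s))p^{-\zeta+1}+\mu(\mathcal C_{r,s}(x))(1+N\mu(B(x,r+s)))$, uniformly over $0\le j\le j+q\le N-p$.

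The main obstacle, as I see it, is the bookkeeping around the inner event $\{S_{p+1}^{N-j}(x,r)=q\}$: it is not a union of atoms of any $\mathcal Q_m$ at a fixed scale, only a measurable set in $\sigma(\cup_m\mathcal Q_m)$, so one must be careful that Lemma~\ref{LEM0} (which allows an arbitrary $B\in\sigma(\cup_m\mathcal Q_m)$ for the second factor) is indeed applied with the ball-approximant in the first slot and the return-counting event in the second slot, and not the other way around. A related subtlety is that when we replace $B(x,r)$ by $B(x,r+s)$ inside the definition of $S_{p+1}^{N-j}$ the value $q$ may change; the cleanest fix is to bound the covariance of the true pair by the covariance of the approximating pair plus $\mu(\mathcal C_{r,s}(x))(1+N\mu(B(x,r+s)))$ via the elementary inequality $|\Cov(\mathbf 1_U,\mathbf 1_V)-\Cov(\mathbf 1_{U'},\mathbf 1_{V'})|\le \mu(U\triangle U')+\mathbb E|\mathbf 1_V-\mathbf 1_{V'}|$ and then estimate $\mathbb E|\mathbf 1_{\{S_{p+1}^{N-j}(x,r)=q\}}-\mathbf 1_{\{S_{p+1}^{N-j}(x,r+s)=q\}}|$ by the probability of at least one visit to the corona in $[p+1,N-j]$, which is at most $N\mu(\mathcal C_{r,s}(x))\le N\mu(B(x,r+s))\cdot\tfrac{\mu(\mathcal C_{r,s}(x))}{\mu(B(x,r+s))}$ — absorbed into the stated form. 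Everything else is routine given Lemma~\ref{LEM0} and \eqref{diametre}.
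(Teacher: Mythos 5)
Your overall strategy matches the paper's: approximate the lifted ball preimage by unions of partition atoms using \eqref{diametre}, apply Lemma~\ref{LEM0}, and control the approximation error through the corona. But there are two genuine gaps on the second factor.

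\textbf{The $S$-event is not $\mathcal Q$-measurable.} You assert that $F^{-K}\pi^{-1}\{S_{p+1}^{N-j}(x,r)=q\}$ belongs to $\sigma(\bigcup_m\mathcal Q_m)$, so that Lemma~\ref{LEM0} applies to it directly in the second slot. That is false. Sets in $\sigma(\bigcup_m\mathcal Q_m)$ are unions of $\bar\pi$-fibres, i.e.\ unions of stable leaves — this is exactly what lets the proof of Lemma~\ref{LEM0} write $B=\bar\pi^{-1}\bar B$. But $\pi^{-1}B(x,r)$ is not constant along stable leaves (two points on the same $\gamma^s$ get close under forward iteration, but need not enter $B(x,r)$ simultaneously), so neither is the $S$-event. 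Replacing $r$ by $r+s$ inside $S$ does not help, since $\{S_{p+1}^{N-j}(x,r+s)=q\}$ is also not $\bar\pi$-measurable. The step you are missing is the paper's construction of a genuinely $\mathcal Q$-measurable approximant: define $S_1^{k_0,(K)}(x,r)(y)=\#\{\ell\le k_0\colon y\in D_{x,r}^{(K+\ell)}\}$, where each $D_{x,r}^{(K+\ell)}$ is a union of atoms of $\mathcal Q_{2(K+\ell)}$, and use $B'=\{S_1^{N-j-p,(K)}(x,r)=q\}$ as the second slot. Then $B\triangle B'$ is contained in a set $B''$ (a union of corona preimages) of measure at most $N\mu(\mathcal C_{r,s}(x))$.

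\textbf{The elementary covariance inequality is too lossy.} Your ``cleanest fix'' $\bigl|\Cov(\mathbf 1_U,\mathbf 1_V)-\Cov(\mathbf 1_{U'},\mathbf 1_{V'})\bigr|\le \mu(U\triangle U')+\mathbb E|\mathbf 1_V-\mathbf 1_{V'}|$ gives a corona term of order $\mu(\mathcal C_{r,s}(x))\cdot(1+N)$, \emph{not} $\mu(\mathcal C_{r,s}(x))(1+N\mu(B(x,r+s)))$: your ``absorption'' at the end is an equality $N\mu(\mathcal C_{r,s}(x))=N\mu(B(x,r+s))\cdot\mu(\mathcal C_{r,s}(x))/\mu(B(x,r+s))$, which does not recover the missing factor $\mu(B(x,r+s))$. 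That factor is essential: in the proof of Theorem~\ref{THM} one has $N_r\approx t/\mu(B(x,r))$, so with $N$ alone the bound is useless, while with $N\mu(B(x,r+s))\approx t$ it is $o(\mu(B(x,r)))$. The paper gets the extra $\mu(B(x,r+s))$ by retaining the covariance structure: in \eqref{EQ0} the cross-term is $\nu(A')\nu(B'')$, where the factor $\nu(A')\le\mu(B(x,r+s))$ comes for free from writing $\mathbb E[\mathbf 1_{A'}\mathbf 1_{B''}\circ F^p]=\Cov_\nu(\mathbf 1_{A'},\mathbf 1_{B''}\circ F^p)+\nu(A')\nu(B'')$ and bounding the leftover covariance with Lemma~\ref{LEM0}.

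A small further remark: $A^\pm$ is a union of atoms of $\mathcal Q_{2K}$, not $\mathcal Q_K$ (a union of atoms of a finer partition need not be a union of atoms of a coarser one), so the constraint from Lemma~\ref{LEM0} is $p\ge 2\cdot 2K=4K$ — satisfied since $K=\lfloor p/4\rfloor$ — rather than your ``$p+1-k\ge k$''. This does not affect the conclusion, but the stated justification is incorrect.
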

\begin{proof}
Let $K:=\lfloor p/4\rfloor $.
We have to estimate the following quantity:
$$\R_1(x,r,N,p)=\sup_{0\le j\le j+q\le N-p}
\left|\Cov_{\mu}(\mathbf 1_{B(x,r)},\mathbf 1_{\{S_{p+1}^{N-j}(x,r)=q\}})\right|.$$
Using successively $f_*\mu=\mu$, $\mu=\pi_*\nu$
and $f\circ\pi=\pi\circ F$, we obtain

$\displaystyle \left|\Cov_{\mu}(\mathbf 1_{B(x,r)},\mathbf 1_{\{S_{p+1}^{N-j}(x,r)=q\}})\right|=$
\begin{eqnarray*}
&=&\left|\Cov_{\mu}\left(\mathbf 1_{B(x,r)}\circ f^K,\mathbf 1_{\{S_{1}^{N-j-p}(x,r)=q\}}
    \circ f^{K+p}\right)\right|\\
&=&\left|\Cov_{\nu}\left(\mathbf 1_{B(x,r)}\circ \pi\circ F^K,
\mathbf 1_{\{S_{1}^{N-j-p}(x,r)=q\}}
    \circ \pi \circ F^{K+p}\right)\right|\\
&=&\left|\Cov_{\nu}\left(\mathbf 1_{F^{-K}\pi^{-1}(B(x,r))},
\mathbf 1_{F^{-K}\pi^{-1}(\{S_{1}^{N-j-p}(x,r)=q\})}
   \circ F^{p}\right)\right|.
\end{eqnarray*}
First we approximate the sets appearing in this last formula
with union of elements of the $\mathcal Q_k$'s.
More precisely, we approximate ${F^{-K}\pi^{-1}(B(x,r))}$ by ${D_{x,r}^{(K)}}$ with
$$D_{x,r}^{(K)}:=\bigcup_{Q\in \mathcal Q_{2K}:Q\cap F^{-K}\pi^{-1}B(x,r)\ne\emptyset } Q 
=\bigcup_{Q\in \mathcal Q_{2K}:\pi F^K Q\cap B(x,r)\ne\emptyset } Q $$
and ${F^{-K}\pi^{-1}(\{S_{1}^{N-j-p}(x,r)=q\})}$
by ${\{S_{1}^{N-j-p,(K)}(x,r)=q\}}$ where
$$ S_{1}^{k_0,(K)}(x,r)(y):=\#\{\ell=1,...,k_0\ :\ y\in D_{x,r}^{(K+\ell)}\}.$$
Let us set
$$A':=D_{x,r}^{(K)},\ \ A:=  F^{-K}\pi^{-1}B(x,r),\ \ A'':= 
   \bigcup_{Q\in \mathcal Q_{2K}:Q\cap (D_{x,r}^{(K)}\setminus  F^{-K}\pi^{-1}B(x,r))\ne\emptyset} Q, $$
$$B':=\{S_{1}^{N-j-p,(K)}(x,r)(y)=q\},\ \ B:=
F^{-K}\pi^{-1}(\{S_{1}^{N-j-p}(x,r)=q\})$$
and 
$$B'':=\bigcup_{\ell=1}^{N-j-p}
  \bigcup_{Q\in \mathcal Q_{2(K+\ell)}:Q\cap (D_{x,r}^{(K+\ell)}\setminus  F^{-K-\ell}\pi^{-1}B(x,r))\ne\emptyset} Q.$$
Observe that $A\subset A'\subset A\cup A''$ and
$B\subset B'\subset  B\cup B''$ so that we obtain

$\displaystyle|\Cov_\nu(\mathbf 1_A,\mathbf 1_B\circ F^{p})-\Cov_\nu(\mathbf 1_{A'},\mathbf 1_{B'}\circ F^{p})| \le $
\begin{eqnarray}
&\le& |\mathbb E[(\mathbf 1_{A'}-\mathbf 1_A)\mathbf 1_{B'}\circ F^p]+
\mathbb E[\mathbf 1_A(\mathbf 1_{B'}-\mathbf 1_B)\circ F^p]\nonumber\\
&\ &\ \ \ \ \ \ \  -(\nu(A')-\nu(A))\nu(B')-\nu(A)(\nu(B')-\nu(B))|\nonumber\\
&\le&
|\Cov_\nu(\mathbf 1_{A''},\mathbf 1_{B'}\circ F^{p})|+|\Cov_\nu(\mathbf 1_{A'},\mathbf 1_{B''}\circ F^{p})|+\nonumber\\
&\ &\ \ \ \ +2\nu(A'')\nu(B')+2\nu(A')\nu(B'').\label{EQ0}
\end{eqnarray}
Now, due to (\ref{diametre}) and (\ref{couronne}), we observe that
\begin{eqnarray}
\nu(A'')&\le& \nu(F^{-k}\pi^{-1}(B(x,r+s)\setminus B(x,r)))\nonumber\\
&\le&\mu(\mathcal C_{r,s}(x))\label{EQ1}
\end{eqnarray}
and that
\begin{eqnarray}
\nu(B'')
&\le& \sum_{\ell=1}^{N-j-p}\nu(D_{x,r}^{(K+\ell)}\setminus F^{-(K+\ell)}\pi^{-1}(B(x,r)))\nonumber  \\
&\le& \sum_{\ell=1}^{N-j-p}\nu(
F^{-(K+\ell)}\pi^{-1}(B(x,r+s)\setminus B(x,r))\nonumber  \\
&\le& N \mu(\mathcal C_{r,s}(x)).\label{EQ2}
\end{eqnarray}
Proposition \ref{propR1} follows from (\ref{EQ0}), (\ref{EQ1}), (\ref{EQ2}), $\nu(A')\le \mu(B(x,r+s))$, $\nu(B')\le 1$ together with Lemma \ref{LEM0}
applied three times (with $(A',B')$, $(A',B'')$ and $(A'',B')$ respectively).
\end{proof}

\section{Recurrence rate for tower systems}\label{sec:recrate}

Recall the definition of the recurrence rate 
\[
\rec(x)=\lim_{r\to0}\frac{\log\tau_r(x)}{-\log r},
\]
when the limit exists, where 
\begin{equation}\label{eq:tur}
\tau_r(x)=\inf\{n\ge1\colon d(f^nx,x)<r\}.
\end{equation}
A large lower bound for $\rec$ will allow a more efficient estimate of the second error term $\R_2$ in the next section.
For systems modeled by a Young tower as described in Section~\ref{sec:tower}, 
we show below that the recurrence rate is as large as it can be.

Since the measure $\mu$ is hyperbolic, its pointwise dimension exists a.e. \cite{bps} and we have
\begin{equation}\label{eq:dim}
\lim_{r\to0} \frac{\log\mu(B(x,r))}{\log r}=\dim_H\mu\quad\mu\text{-a.e..}
\end{equation}

In the case of super-polynomial decay of correlation, \cite{rs} applies and gives 
$\rec=\dim_H\mu$ $\mu$-a.e..
In our setting this decay might be only polynomial hence the general result above may not apply.
However, thanks to the Markov-tower structure we can refine the argument in \cite{rs}.

\begin{theorem}\label{thm:recrate}
For a system $(f,\mu)$ modeled by a Young tower we have $\rec=\dim_H\mu$ $\mu$-a.e.
\end{theorem}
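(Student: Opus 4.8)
The recurrence rate $\rec(x)$ is a local dynamical quantity, and we want to show it equals $\dim_H\mu$ almost everywhere. The inequality $\rec(x)\le\dim_H\mu$ is general: for $\mu$-a.e. $x$ the pointwise dimension equals $\dim_H\mu$ by \eqref{eq:dim}, and a standard application of the Borel--Cantelli lemma (the ``easy'' direction, see \cite{bs}) combined with the fact that $\mu$ is $f$-invariant shows that $\tau_r(x)\le r^{-\dim_H\mu-\eps}$ infinitely often for a.e. $x$; indeed this direction holds for any invariant measure. So the real content is the lower bound $\rec(x)\ge\dim_H\mu$ $\mu$-a.e., i.e.\ for a.e.\ $x$ and all small $r$, $\tau_r(x)\ge r^{-\dim_H\mu+\eps}$.

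First I would transfer the problem to the tower. Since $\mu=\pi_*\nu$ and $f\circ\pi=\pi\circ F$, a near-return $d(f^n x,x)<r$ for $x=\pi(z)$ forces $z$ and $F^n z$ to be close in a suitable sense after projection, at least when $n$ is large enough that the tower coordinate has cycled. The key structural input is \eqref{diametre}: atoms of $\mathcal Q_{2k}$ have $\pi F^k$-image of diameter $\precsim k^{-\alpha}$. Following the refinement of \cite{rs} alluded to in the text, I would fix $\eps>0$ and, for each $n$, consider the ``bad set'' $E_n(r)$ of points $z\in\Delta$ that return $r$-close to themselves at some time $\le n$ with $r=n^{-1/(\dim_H\mu-\eps)}$ (so that $n\approx r^{-(\dim_H\mu-\eps)}$), and show $\sum_n \nu(E_n(r_n))<\infty$, then invoke Borel--Cantelli. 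To bound $\nu(E_n(r))$ one splits according to whether the return time $m\le n$ is ``short'' or ``long'' relative to the recurrence depth in the tower.

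The crucial estimate is for each fixed return time $m$: $\nu(\{z: d(f^m\pi z,\pi z)<r\})$ should be controlled by something like (number of $\mathcal Q$-atoms needed to cover a self-close orbit segment) times $\mu(B(\cdot,r))$, using \eqref{diametre} to say that if $m\ge Ck$ then the relevant atom of $\mathcal Q_{2k}$ has image of diameter comparable to $r$ when $k\approx r^{-1/\alpha}$, and using the SRB property / bounded distortion (P4) to get that $\nu$ of such an atom is comparable to $\mu$ of a ball of radius $\approx r$, hence $\precsim r^{\dim_H\mu-\eps}$ by \eqref{eq:dim}. Summing over $m\le n$ gives an extra factor $n$, and one must check that $n\cdot r^{\dim_H\mu-\eps}$ is summable along a geometric subsequence $r=r_n$; this is where the hypothesis \eqref{eq:deltak}, $\alpha>1/\dim_H\mu$, enters, to guarantee that the ``time spent climbing the tower'' ($\approx r^{-1/\alpha}$) is negligible compared with $n\approx r^{-(\dim_H\mu-\eps)}$, so that genuinely short returns (where $\pi$ does not see the expansion) are rare. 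Short returns with $m$ smaller than the tower-climbing time must be treated separately, but there the orbit $z,Fz,\dots,F^m z$ stays low in the tower and one can bound their contribution directly by the measure of the base levels of height $\ge$ something, using the polynomial tail \eqref{poly}.

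**Main obstacle.** The delicate point is the uniform-in-$m$ covering estimate: controlling, for a fixed return time $m$, the $\nu$-measure of points whose $f$-orbit comes back within $r$, \emph{without} paying an unaffordable price in the number of atoms or in distortion, and making the bound summable after multiplying by $m\le n$ and summing over a geometric scale sequence. This requires balancing three scales --- the geometric radius $r$, the number of tower steps $k\approx r^{-1/\alpha}$ needed for \eqref{diametre} to give diameter $\approx r$, and the target return time $n\approx r^{-(\dim_H\mu-\eps)}$ --- and it is precisely the standing assumption \eqref{eq:deltak} that makes the bookkeeping close. I expect the argument to parallel \cite{rs} closely once these scales are set up, with the tower's Markov structure supplying the missing uniformity that \cite{rs} obtained from fast mixing.
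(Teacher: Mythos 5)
Your upper bound $\rec\le\dim_H\mu$ via \cite{bs} and the overall strategy of a Borel--Cantelli argument at geometric scales are indeed the skeleton of the paper's proof. But the way you propose to estimate the bad sets has a gap that kills summability, and you do not identify the one estimate that actually makes the argument close.

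You propose to show $\sum_n \nu(E_n(r_n))<\infty$, where $E_n(r)$ is the set of $z$ that return $r$-close at \emph{some} time $m\le n$, with $r_n\approx n^{-1/(d-\eps)}$. Bounding term by term, you get
$\nu(E_n(r_n))\precsim \sum_{m\le n}\nu\{z: d(f^m\pi z,\pi z)<r_n\}\precsim n\cdot r_n^{d-\eps}\approx 1$, which is not summable (and indeed cannot be small, since Kac's lemma forces most points to return by time $\approx r^{-d}$). So Borel--Cantelli cannot be applied to the sets $E_n(r_n)$. The paper avoids this by not summing over all times $m\le n$: it decomposes the time axis into disjoint windows $(n_k,n_{k+1}]$ of polynomially growing length $\ell_k=k^q$ and defines $T_k$ to catch returns within radius $r_k/2$ only in the $k$-th window. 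Because each window contributes only $\ell_k\mu(B(x,r_k))^2$ (plus a mixing error), the individual $\mu(T_k\cap G)$ \emph{are} summable even though their union up to $k$ is not small; Borel--Cantelli then gives that a.e.\ $x$ is eventually outside $T_k$, and a short telescoping argument converts ``eventually outside $T_k$'' into a lower bound on $\tau_r(x)$.

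The second thing you are missing is the decorrelation Lemma~\ref{lem:decball}: the bound
\[
\mu\bigl(B(x,r)\cap f^{-n}\{S_1^\ell(x,r)\ge1\}\bigr)\le C\bigl[\mu(B(x,r+s))\,n^{-\zeta+1}+\ell\,\mu(B(x,r+s))^2\bigr],
\quad s=\lfloor n/4\rfloor^{-\alpha},
\]
obtained by approximating $\pi^{-1}B(x,r)$ with unions of $\mathcal Q_{2K}$-atoms and invoking Lemma~\ref{LEM0}. This is where \eqref{diametre}, the tower Markov structure, and the polynomial mixing rate $\zeta$ actually enter; the exponents $q=\lfloor 1/(\zeta-1-2\eps\beta)\rfloor$ and $\beta=1/(d-4\eps)$ are then chosen precisely to make both terms of the resulting bound $k^{(q+1)(1-\zeta+2\eps\beta)}+k^{q-(q+1)\beta(d-3\eps)}$ summable. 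Your sketch mentions \eqref{diametre} and \eqref{eq:deltak} in the right spirit, but the heuristic ``$\nu$ of such an atom $\precsim r^{\dim_H\mu-\eps}$, multiply by $n$, check summability'' does not survive the arithmetic, and the polynomial mixing rate $\zeta$ never appears in your bookkeeping, whereas in the paper it is the constraint that dictates the window lengths. You also suggest a separate treatment of very short returns using the tail \eqref{poly}; the paper avoids any such case split because the corona widening $s=\lfloor n/4\rfloor^{-\alpha}$ built into Lemma~\ref{lem:decball} already handles it uniformly, and the nonperiodicity of a.e.\ $x$ disposes of the finitely many initial windows.
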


First, we need the following decorrelation lemma (we use the notation $d:=\dim_H\mu$).
\begin{lemma}\label{lem:decball}
There exists $C>0$ such that for all $x$ and $r>0$ we have
\[
\mu(B(x,r)\cap f^{-n}\{S_1^\ell(x,r)\ge1\}) \le C [\mu(B(x,r+s)) n^{-\zeta+1}+\ell\mu(B(x,r+s))^2]
\]
where $s=\lfloor n/4\rfloor ^{-\alpha}$.
\end{lemma}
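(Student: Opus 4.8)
The plan is to run the same decorrelation machinery used in Proposition~\ref{propR1}, but now in a form tailored to a single return-time gap rather than to a sum of indicators. First I would pass to the tower: since $f_*\mu=\mu$, $\mu=\pi_*\nu$ and $f\circ\pi=\pi\circ F$, the quantity $\mu(B(x,r)\cap f^{-n}\{S_1^\ell(x,r)\ge1\})$ equals $\nu(\pi^{-1}B(x,r)\cap F^{-n}\pi^{-1}\{S_1^\ell(x,r)\ge1\})$. As in Proposition~\ref{propR1}, set $K=\lfloor n/4\rfloor$ and move the left factor forward by $F^K$ at the cost of a corona: write $A=F^{-K}\pi^{-1}B(x,r)$, approximate it by the $\mathcal Q_{2K}$-saturation $A'=D_{x,r}^{(K)}$, and let $A''$ be the surrounding annulus of atoms, so $A\subset A'\subset A\cup A''$ with $\nu(A'')\le\mu(\mathcal C_{r,s}(x))$ where $s=C_2K^{-\alpha}$. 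Likewise the event $\{S_1^\ell(x,r)\ge1\}$ pulled back is contained in the union over $\ell'=1,\dots,\ell$ of $F^{-\ell'}\pi^{-1}B(x,r)$; replacing each by its $\mathcal Q$-saturation as in the definition of $S_1^{k_0,(K)}$ produces a set $B'$ that is a union of atoms of the $\mathcal Q_m$'s, together with an error set $B''$ whose measure is at most $\ell\,\mu(\mathcal C_{r,s}(x))$ by exactly the estimate~\eqref{EQ2}.

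Next I would write $\mu(B(x,r)\cap f^{-n}\{S_1^\ell\ge1\}) = \mathbb E_\nu[\mathbf 1_A\cdot\mathbf 1_B\circ F^n]$, bound it by $\mathbb E_\nu[\mathbf 1_{A'}\cdot\mathbf 1_{B'}\circ F^{n-K}]$ up to the error terms $\nu(A'')$ and $\nu(B'')$ just controlled (here $B'$ is the $F^{-K}$-shifted saturated event, and one uses $A'$ being $\mathcal Q_{2K}$-measurable so that $\mathbf 1_{A'}\circ F^{-K}$ makes sense as a $\mathcal Q_0$-type function on which Lemma~\ref{LEM0} applies). Then decompose
\[
\mathbb E_\nu[\mathbf 1_{A'}\,\mathbf 1_{B'}\circ F^{n-K}] = \Cov_\nu(\mathbf 1_{A'},\mathbf 1_{B'}\circ F^{n-K}) + \nu(A')\,\nu(B').
\]
For the covariance, Lemma~\ref{LEM0} with $A'$ a union of atoms of $\mathcal Q_{2K}$ and $n-K\ge n/2\ge 2K$ gives a bound $C'\nu(A')\,(n-K)^{-\zeta+1}\precsim \mu(B(x,r+s))\,n^{-\zeta+1}$, using $\nu(A')\le\mu(B(x,r+s))$. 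For the product term, $\nu(B')\le \sum_{\ell'=1}^\ell\nu(F^{-\ell'}\pi^{-1}B(x,r+s))=\ell\,\mu(B(x,r+s))$ by invariance and the inclusion $\pi F^K Q\subset B(x,r+s)$ for atoms $Q$ meeting $F^{-K}\pi^{-1}B(x,r)$ (from~\eqref{diametre}), so $\nu(A')\nu(B')\le\ell\,\mu(B(x,r+s))^2$. Collecting all four contributions—the two annulus errors $\mu(\mathcal C_{r,s}(x))\le\mu(B(x,r+s))$ and $\ell\,\mu(\mathcal C_{r,s}(x))\le\ell\,\mu(B(x,r+s))$, the covariance term, and the product term—yields the claimed inequality, absorbing constants into a single $C$; note $\mu(\mathcal C_{r,s}(x))\le\mu(B(x,r+s))\le\mu(B(x,r+s))n^{-\zeta+1}\cdot n^{\zeta-1}$ is handled by simply keeping $\mu(B(x,r+s))$ terms, which are dominated by $\ell\mu(B(x,r+s))^2$ only after the a.e.\ argument in Section~\ref{sec:recrate}; here we just record the stated bound with both terms present.

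The main obstacle I anticipate is bookkeeping the approximation errors correctly: one must check that shifting the first factor by $F^K$ and the event by $F^{-K}$ simultaneously is consistent (the net shift inside the covariance becomes $F^{n-K}$, and one needs $n-K\ge 2\cdot 2K$, i.e.\ essentially $n\ge 4K$, which holds since $K=\lfloor n/4\rfloor$), and that the saturated event $B'$ really contains the pullback of $\{S_1^\ell(x,r)\ge1\}$ after the shift. The measure-of-corona estimates~\eqref{EQ1}–\eqref{EQ2} and the diameter bound~\eqref{diametre} are exactly what make these inclusions work, so the argument is a streamlined single-gap version of Proposition~\ref{propR1}; no genuinely new estimate is needed beyond Lemma~\ref{LEM0}. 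The only subtlety is that here we do not yet invoke~\eqref{couronne}—the corona term is kept as $\mu(B(x,r+s))$ and will be disposed of later using the recurrence-rate input—so the statement is deliberately phrased with $\mu(B(x,r+s))$ rather than $o(\mu(B(x,r)))$.
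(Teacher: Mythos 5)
Your overall direction is right (lift to the tower, take $K=\lfloor n/4\rfloor$, saturate $F^{-K}\pi^{-1}B(x,r)$ into $A'=D^{(K)}_{x,r}$, and invoke Lemma~\ref{LEM0}), but the execution has two concrete problems that prevent the proof from closing.

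First, you import the two-sided corona apparatus of Proposition~\ref{propR1} where it is both unnecessary and harmful. Here the quantity to bound is a \emph{measure} of an intersection, not a covariance, so monotonicity lets you enlarge: with $A:=F^{-K}\pi^{-1}B(x,r)\subset A'$ and $F^{-K}\pi^{-1}\{S_1^\ell(x,r)\ge1\}\subset B':=\bigcup_{j=1}^\ell F^{-j}A'$, one gets directly
\[
\mu\bigl(B(x,r)\cap f^{-n}\{S_1^\ell(x,r)\ge1\}\bigr)\le\nu\bigl(A'\cap F^{-n}B'\bigr)
=\Cov_\nu\bigl(\mathbf 1_{A'},\mathbf 1_{B'}\circ F^n\bigr)+\nu(A')\nu(B'),
\]
with no annulus error terms at all. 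That is the paper's proof. Your extra contributions $\mu(\mathcal C_{r,s}(x))$ and $\ell\,\mu(\mathcal C_{r,s}(x))$ are not of the form $\mu(B(x,r+s))n^{-\zeta+1}$ nor $\ell\mu(B(x,r+s))^2$, and — as you concede at the end — they cannot be absorbed into the stated bound. So the proposal, as written, proves a weaker inequality than the lemma claims; the corona is not a cosmetic extra, it is an obstruction.

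Second, your shift bookkeeping is off. Pushing both factors by $F^{-K}$ (which is what replaces $\pi^{-1}B(x,r)$ by $A$ and $\pi^{-1}\{S_1^\ell\ge1\}$ by its $F^{-K}$-preimage) leaves the time gap in the covariance equal to $n$, not $n-K$. Since $A'$ is a union of atoms of $\mathcal Q_{2K}$, Lemma~\ref{LEM0} requires the gap to be $\ge 2\cdot 2K=4K$; with gap $n$ and $K=\lfloor n/4\rfloor$ this holds, whereas with your gap $n-K$ the requirement becomes $n\ge 5K$, which fails. The check "$n-K\ge n/2\ge 2K$" that you perform is not the hypothesis of Lemma~\ref{LEM0} (the lemma needs $\ge 2k$ with $k=2K$, i.e.\ $\ge 4K$). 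Once these two points are corrected the remaining estimates $\nu(A')\le\mu(B(x,r+s))$ (via~\eqref{diametre}) and $\nu(B')\le\ell\nu(A')$ (by $F$-invariance of $\nu$) give exactly the stated bound.
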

\begin{proof}
We define the set $A':=D_{x,r}^{(K)}$ as in the proof in Section~\ref{sec:R1}, but here we take $K=\lfloor n/4\rfloor$.
We have $F^{-K}\pi^{-1} B(x,r) \subset A'$.
In the same way, the set $F^{-K}\pi^{-1}\{S_1^\ell(x,r)\ge1\}$ is contained in $B':=\cup_{j=1}^\ell F^{-j}A'$.
Hence, due to Lemma \ref{LEM0},
\[
\begin{split}
\mu(B(x,r)\cap f^{-n}\{S_1^\ell(x,r)\ge1\}) 
&\le
\nu(A'\cap F^{-n}B') \\
&\le 
C'\nu(A') n^{-\zeta+1} + \nu(A')\nu(B').
\end{split}
\]
We remark that $\nu(A')\le \mu(B(x,r+K^{-\alpha}))$ since $\pi(F^K A')\subset B(x,r+K^{-\alpha})$
due to (\ref{eq:deltak}).
Moreover, by invariance of $\nu$, $\nu(B')\le \ell\nu(A')$, which finishes the proof.
\end{proof}

The proof of Theorem~\ref{thm:recrate} follows the lines of Lemma~16 in \cite{rs} but takes advantage of the (Markov) tower structure, 
which allows to use the much more efficient decorrelation Lemma~\ref{lem:decball} above
instead of an approximation by Lipschitz functions.

\begin{proof}[Proof of Theorem~\ref{thm:recrate}]

Let $\eps>0$ and take $\beta=\frac{1}{d-4\eps}$ with $\eps>0$ so small that $\beta\le\alpha$
and $1+2\eps\beta<\zeta$. 
Set
\[
G(\eps,\tilde r) = \{x\in \M\colon \forall r<\tilde r, r^{d+\eps}\le \mu(B(x,r)) \le r^{d-\eps}\}
\]
with $\tilde r$ so small that that $\mu(G(\eps,\tilde r))>1-\eps$ (see~\eqref{eq:dim}).

Choose $q=\lfloor 1/(\zeta-1-2\eps\beta) \rfloor$. Let $\ell_k= k^q $ and set $n_{k+1}=\ell_1+\cdots+\ell_k$.
We have $n_k\approx k^{q+1}$. Set $r_k=n_k^{-\beta}$. We have $n_k^{-\alpha}\le r_k$ since $\beta\le\alpha$.
By Lemma~\ref{lem:decball} we have for $x\in G(\eps,\tilde r)$ and $k$ sufficiently large so that $r_k+n_k^{-\alpha}<\tilde r$ 
\[
\mu(B(x,r_{k})\cap f^{-n_k}\{S_1^{\ell_{k}}(x,r_k)\ge1\}) 
\le 2^{2d}C  [r_k^{d-\eps}n_k^{-\zeta+1}+\ell_k r_k^{2d-2\eps}].
\]
Conditioning on $B(x,r_k)$ we get
\[
\mu(f^{-n_k}\{S_1^{\ell_{k}}(x,r_k)\ge1\}|B(x,r_{k})) 
\le 2^{2d}C  [r_k^{-2\eps}n_k^{-\zeta+1}+\ell_k r_k^{d-3\eps}].
\]
Let $T_k = \{y\in \M\colon \exists j=n_k+1,\ldots,n_{k+1}, d(f^j(y),y)<r_k/2\}$.
Take a maximal $r_k/4$-separated subset $G_k\subset G(\eps,\tilde r)$.
The balls $B(x,r_k/2)$, $x\in G_k$, cover $G(\eps,\tilde r)$.
Moreover the balls $B(x,r_k)$ cover it with a multiplicity bounded by some constant $C''$ 
depending only on the finite dimensional manifold $\M$.
Summing up over $x\in G_k$ we get
\[
\begin{split}
\mu(G(\eps,\tilde r) \cap T_k )
&\le
\sum_{x\in G_k} \mu(B(x,r_k/2) \cap T_k) \\
&\le \sum_{x\in G_k} \mu(B(x,r_k) \cap f^{-n_k}\{S_1^{\ell_{k}}(x,r_k)\ge1\}) \\
&\le
2^{2d}C C''[r_k^{-2\eps}n_k^{-\zeta+1}+\ell_k r_k^{d-3\eps}]\\
&\le
C''' (k^{(q+1)(1-\zeta+2\eps\beta)}+k^{q-(q+1)\beta(d-3\eps)}).
\end{split}
\]
This upper bound is summable in $k$ by construction.
An application of Borel-Cantelli lemma shows that for $\mu$-a.e. $x\in G(\eps,\tilde r)$ there exists $k(x)$
such that $x\not\in T_k$ for $k\ge k(x)$. For $x$ non periodic we have $\tau_r(x)\to\infty$. 
Hence there exists $r(x)$ such that $\tau_{r(x)}(x)>n_{k(x)}$. 
Let $r<\min(r(x),r_{k(x)})/2$. Take $k\ge k(x)$ such that $r_{k+1}<2r\le r_k$. 
Since $x$ does not belong to any of the $T_j$'s, $k(x)\le j\le k$, we get $\tau_r(x)\ge n_k$.
Hence
\[
\frac{\log \tau_r(x)}{-\log r} \ge \frac{\log n_k}{-\log r_{k+1}/2} \to\frac{1}{\beta} .
\]
Since $\eps$ is arbitrary this proves that for $\mu$-a.e. $x$
\[
\liminf_{r\to0}\frac{\log \tau_r(x)}{-\log r}\ge\dim_H\mu.
\]
On the other hand the limsup is always bounded by the dimension~\cite{bs}, 
whence the convergence of the recurrence rate and its equality with the dimension of $\mu$.
\end{proof}

\section{Estimate of short returns : $\R_2$}\label{sec:R2}

We now provide an optimal estimate of $\R_2$ in the following sense:
By Kac's Lemma the mean return time into $B(x,r)$ cannot be larger than $1/\mu(B(x,r))$.
Thus it is clear that any $\sigma>\dim_H\mu$ would contradict \eqref{eq:R2sigma}.

\begin{proposition}\label{pro:R2}
Let $\sigma<\dim_H\mu$.
For $\mu$-a.e. $x\in \M$ we have
\begin{equation}\label{eq:R2sigma}
\R_2(B(x,r),\lfloor r^{-\sigma}\rfloor ) =o(\mu(B(x,r))).
\end{equation}
\end{proposition}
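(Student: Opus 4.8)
The quantity to control is $\R_2(x,r,p_r)=\mu(\{y\in B(x,r):S_2^{p_r}(x,r)(y)\ge 1\})$ with $p_r=\lfloor r^{-\sigma}\rfloor$, i.e.\ the measure of points in $B(x,r)$ that return to $B(x,r)$ within $p_r$ iterates. The key idea is that Theorem~\ref{thm:recrate} forbids such short returns for $\mu$-a.e.\ $x$, once $r$ is small. First I would fix $\sigma<d=\dim_H\mu$ and pick $\sigma'$ with $\sigma<\sigma'<d$. By Theorem~\ref{thm:recrate}, for $\mu$-a.e.\ $x$ one has $\tau_r(x)\ge r^{-\sigma'}$ for all $r$ small enough (say $r<r_0(x)$); equivalently, for such $x$ there is no $n$ with $1\le n\le r^{-\sigma'}$ and $d(f^nx,x)<r$. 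Since $r^{-\sigma}=p_r<r^{-\sigma'}$ eventually, the recurrence bound applies at the scale $p_r$.

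\textbf{From a single center to a neighborhood of the center.} The subtlety is that $\R_2$ concerns the orbits of \emph{all} $y\in B(x,r)$ returning near $x$, not the orbit of $x$ itself, so a pointwise statement about $\tau_r(x)$ is not literally enough. The standard fix is a doubling/triangle-inequality argument: if $y\in B(x,r)$ and $f^n y\in B(x,r)$ for some $1\le n\le p_r$, I want to deduce a short return for a nearby \emph{reference point}. Concretely I would cover the set of centers by balls at a slightly larger scale; working with a center $x$ satisfying the $\mu$-a.e.\ recurrence bound at scale $2r$ (or $3r$), one gets that $d(f^n z,z)<2r$ for some/every $z$ in a neighbourhood forces $n\ge (2r)^{-\sigma'}$ again by Theorem~\ref{thm:recrate} applied at the scale $2r$. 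Thus for $\mu$-a.e.\ $x$ and all $r$ small enough the set $\{y\in B(x,r):S_2^{p_r}(x,r)(y)\ge1\}$ is already empty, giving $\R_2(x,r,p_r)=0$, which is certainly $o(\mu(B(x,r)))$.

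\textbf{Alternative via the decorrelation Lemma~\ref{lem:decball}.} If the purely pointwise argument above has a gap at the borderline (for instance because the relevant recurrence bound must hold simultaneously for a whole corona of centers), a fallback is to run a Borel--Cantelli argument directly, mirroring the proof of Theorem~\ref{thm:recrate}. Along a geometric subsequence $r_k=\theta^k$, Lemma~\ref{lem:decball} bounds $\mu(B(x,r_k)\cap f^{-n}\{S_1^{\ell}(x,r_k)\ge1\})$ by $C[\mu(B(x,r_k+s))n^{-\zeta+1}+\ell\mu(B(x,r_k+s))^2]$; conditioning on $B(x,r_k)$ and using $\mu(B(x,r))\approx r^{d}$ on a large set $G(\eps,\tilde r)$, one covers $G(\eps,\tilde r)$ by $\approx r_k^{-d}$ balls of radius $r_k$, sums, and checks that with $\ell=p_{r_k}=\lfloor r_k^{-\sigma}\rfloor$ and $\sigma<d$, the resulting series $\sum_k \mu(G(\eps,\tilde r)\cap\{\exists\,y\in B(\cdot,r_k),\ S_2^{p_{r_k}}\ge1\})/\mu(B(\cdot,r_k))$ is summable — this uses $\sigma\alpha>\delta$ only through the earlier estimates, while here the decisive inequalities are $\sigma<d$ (so $\ell\mu(B)\to0$ after conditioning: $r_k^{-\sigma}r_k^{d-\eps}=r_k^{d-\sigma-\eps}\to0$) and $\zeta>1$ (so the first term is a negative power of $r_k$). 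Borel--Cantelli then yields $\R_2(x,r_k,p_{r_k})=o(\mu(B(x,r_k)))$ along the subsequence, and a monotonicity/interpolation step passes from the subsequence to all $r$, using $\R_2$'s mild monotonicity in $r$ and the comparability $\mu(B(x,r_{k+1}))\approx\mu(B(x,r_k))$.

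\textbf{Main obstacle.} The crux is precisely the passage from a statement about the orbit of the center $x$ (which is what Theorem~\ref{thm:recrate} directly gives) to a statement about all orbits starting in $B(x,r)$ and returning to $B(x,r)$: making the doubling-of-radius argument airtight, and ensuring the recurrence/Borel--Cantelli estimate holds \emph{uniformly} over the centers in a full-measure set, not just for one fixed $x$. The quantitative budget is comfortable — losing a factor $2$ in the radius costs only a constant in the dimension estimates, and the gap $\sigma<d$ gives a genuine power saving — so I expect no difficulty there; the care is entirely in the uniformity and in interchanging "a.e.\ $x$" with the covering argument.
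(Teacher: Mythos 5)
Your primary argument has a genuine gap, and it lies exactly where you flag the ``main obstacle.'' The triangle-inequality step is fine: if $y\in B(x,r)$ returns to $B(x,r)$ at some time $n\le p_r$ then $d(f^ny,y)<2r$, i.e.\ $\tau_{2r}(y)\le r^{-\sigma}$. But Theorem~\ref{thm:recrate} gives $\tau_{2r}(y)>r^{-\sigma'}$ only for $\mu$-a.e.\ $y$, and only for $r<r_0(y)$ with $r_0$ depending on $y$; invoking it ``at the center $x$'' or at a nearby reference point $z$ tells you nothing about a given $y\in B(x,r)$, since closeness at time $0$ does not propagate under a hyperbolic map. Exceptional points $y$ with short $\tau_{2r}(y)$ genuinely do exist inside $B(x,r)$, so your conclusion that $\{y\in B(x,r)\colon S_2^{p_r}(x,r)(y)\ge1\}$ is \emph{empty} for small $r$ is false. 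The content of the proposition is strictly weaker and is a statement about relative measure, not about emptiness.

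The missing ingredient is the Lebesgue density theorem, which the paper's proof uses and your proposal never mentions. The paper fixes $\iota>0$, uses Theorem~\ref{thm:recrate} to produce a uniformized set $K_{\sigma,r_1}=\{z\colon\forall r<r_1,\ \tau_{2r}(z)>r^{-\sigma}\}$ with $\mu(\M\setminus K_{\sigma,r_1})<\iota$, notes via the triangle inequality that $\{y\in B(x,r)\colon S_2^{\lfloor r^{-\sigma}\rfloor}(x,r)(y)\ge1\}\subset B(x,r)\setminus K_{\sigma,r_1}$, and then exploits that $\mu$-a.e.\ $x$ is a $\mu$-density point of $K_{\sigma,r_1}$, so that $\mu(B(x,r)\setminus K_{\sigma,r_1})\le\eta\,\mu(B(x,r))$ for $r$ small. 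Letting $\eta,\iota\to0$ gives the claim for a.e.\ $x$. Your fallback through Lemma~\ref{lem:decball} does not repair this: that lemma is applied with a time gap $n$, and for $\R_2$ one needs $n$ as small as $2$, where $s=\lfloor n/4\rfloor^{-\alpha}$ degenerates and $\mu(B(x,r+s))\gg\mu(B(x,r))$; the multi-scale telescoping that overcomes this is precisely the proof of Theorem~\ref{thm:recrate}, after which one still needs the density-point step to convert an a.e.\ pointwise bound on $\tau_{2r}$ into a bound on the conditional measure inside $B(x,r)$.
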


\begin{proof}
Let $\eta>0$.
Let $r_1>0$ and define 
\[
K_{\sigma,r_1} = \{x\in \M\colon \forall r<r_1,\ \tau_{2r}(x)>r^{-\sigma}\}.
\]
Let $\iota>0$. By Theorem~\ref{thm:recrate} there exists $r_1>0$ such that $\mu(\M\setminus K_{\sigma,r_1})<\iota$.
Let $\tilde K_{\sigma,r_1}$ be the set of the Lebesgue density points of $K_{\sigma,r_1}$
with respect to the measure $\mu$. For $x\in\tilde K_{\sigma,r_1}$ we have
\[
\lim_{r\to0} \frac{\mu(B(x,r)\cap K_{\sigma,r_1})}{\mu(B(x,r))} = 1.
\]
Let $r_2<r_1$ and set 
\[
\tilde K_{\sigma,r_1,r_2} =\left\{x\in \M\colon \forall r<r_2, \frac{\mu(B(x,r)\cap K_{\sigma,r_1})}{\mu(B(x,r))} >1-\eta\right\}.
\]
Take $r_2>0$  so small that $\mu(K_{\sigma,r_1}\setminus K_{\sigma,r_1,r_2})<\iota$.
If $x\in \tilde K_{\sigma,r_1,r_2}$ and $r<r_2$ we get
\[
\begin{split}
\R_2(B(x,r),\lfloor r^{-\sigma}\rfloor)
&=
\mu(\{y\in B(x,r)\colon \tau_{B(x,r)}(y)\le r^{-\sigma}\}) \\
&\le 
\mu(\{y\in B(x,r)\colon \tau_{2r}(y)\le r^{-\sigma}\})\\
&\le 
\mu(B(x,r)\setminus K_{\sigma,r_1}) \\
&\le
\eta \mu(B(x,r)).
\end{split}
\]
This concerns all points in $\M$ except a subset of measure $2\iota$ arbitrary small, which proves the proposition. 
\end{proof}

\section{Application to solenoid with intermittency}

We can apply our main theorem to the example considered by \cite{AlvesPinheiro}:
Let $g\colon \TT^1\to\TT^1$ (with $\mathbb T^1=\mathbb R/\mathbb Z$) be a continuous map of degree $d\ge2$ such that 
\begin{itemize}
\item $g$ is $C^2$ on $\TT^1\setminus\{0\}$ and $g'>1$ on $\TT^1\setminus\{0\}$,
\item $g(0)=0$, $g'(0+)=1$ (right derivative at $0$) and there is $\gamma>0$ such that $-x f''(x)\sim|x|^{\gamma}$ in the right vicinity of $0$, 
\item $g'(0-)>1$ (left derivative at $0$).
\end{itemize} 
Let $J_0=(0,x_1)$, $J_1=(x_1,x_2)$,...,$J_{d-1}=(x_{d-1},0)$ be the successive intervals (from the left to the right)
on which $g$ defines a bijection onto $\mathbb T^1\setminus\{0\}$ (the $x_i$ are the non zero preimages of $0$ by $g$).
Let $\DD\subset \CC$ be the unit disk and consider the solid torus $\M = \TT^1\times\DD$.
We define the map 
\[f(x,z)=(g(x),\theta z+ \frac12e^{2i\pi x}),
\]
where $\theta\in(0,1)$ is such that $\theta\|g'\|_\infty<1-\theta$ (i.e. $0<\theta<1/(1+\Vert g'\Vert_\infty)$).

It was already proven by Alves and Pinheiro that this map fits into the general scheme described in Section~\ref{sec:tower} with the parameters $\zeta=1/\gamma>1$ and $\alpha=1+1/\gamma$. In particular the map $f$ admits an SRB measure $\mu$ if and only if $\gamma<1$.
 Let
\[
d((x,z),(x',z')) = \max( |x-x'|,|z-z'| ).
\]
\begin{theorem}\label{thm:solenoid}
For any $\gamma<\sqrt{2}/2$, the conclusion of Theorem \ref{THM} holds for $\mu$-almost every $x\in\mathcal M$,
i.e. for $\mu$-almost every $x\in\mathcal M$, the number of visits to $B(x,r)$ up to time $\lfloor t/\mu(B(x,r))\rfloor$
converges in distribution (with respect to $\mu$) to a Poisson random variable of mean $t$.
\end{theorem}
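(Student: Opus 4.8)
The plan is to check that the solenoid map $f$ fulfils all the hypotheses of Theorem~\ref{THM}. By \cite{AlvesPinheiro} the map fits the Gibbs--Markov--Young scheme of Section~\ref{sec:tower} with $\zeta=1/\gamma$ and $\alpha=1+1/\gamma$: the stable leaves are (pieces of) the fibres $\{x\}\times\DD$, on which $f$ contracts at rate $\theta^{n}$ (a fortiori polynomially, so (P2) holds), and the unstable leaves are nearly horizontal graphs over $\TT^1$ along which $f^{-1}$ contracts at polynomial rate because of the intermittency of $g$, which is where the value $\alpha=1+1/\gamma$ in (P3) comes from. Since $\gamma<\sqrt2/2<1$ we get $\zeta>1$, so \eqref{poly} is satisfied, $\mu$ is the SRB measure, and $\gcd(R_i,i\ge1)=1$ holds because the induced map is full-branched. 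The standing assumption \eqref{eq:deltak} is then immediate: the coordinate projection $\TT^1\times\DD\to\TT^1$ is Lipschitz and pushes $\mu$ onto the absolutely continuous $g$-invariant probability $\mu_g$, which has $\dim_H\mu_g=1$; since a Lipschitz image cannot increase Hausdorff dimension, every full-measure set for $\mu$ projects to a full-measure set for $\mu_g$ of dimension $\ge1$, whence $\dim_H\mu\ge1$ and $1/\dim_H\mu\le1<\alpha$.

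The heart of the matter is the corona condition \eqref{couronne}, and I would establish it using the local product structure of $\mu$. Along unstable (nearly horizontal) leaves $\mu$ is absolutely continuous, while its conditional measures $\mu^s_x$ on the stable fibres are Gibbs-type measures most conveniently described symbolically through the inverse branches of $g$: a point of the fibre over $x$ has $\DD$-coordinate $\frac12\sum_{j\ge1}\theta^{j-1}e^{2\pi i x_{-j}}$, where $(x_{-j})_{j\ge1}$ runs over the $g$-preimage orbits of $x$. Since $d$ is the maximum metric, $B((x,z),r)=B_{\TT^1}(x,r)\times B_\DD(z,r)$, so the corona $B((x,z),r+r^\delta)\setminus B((x,z),r)$ is contained in the union of $[B_{\TT^1}(x,r+r^\delta)\setminus B_{\TT^1}(x,r)]\times B_\DD(z,r+r^\delta)$ and $B_{\TT^1}(x,r+r^\delta)\times[B_\DD(z,r+r^\delta)\setminus B_\DD(z,r)]$. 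The first piece is handled by disintegrating over $\mu_g$: the density of $\mu_g$ is bounded and bounded away from zero outside a neighbourhood of the neutral fixed point (and near it $\mu_g(B_{\TT^1}(0,\rho))\approx\rho^{1-\gamma}$, which still yields an annulus-to-ball ratio $\precsim r^{\delta-1}$), and the distortion estimates (P4)--(P5) make the fibre measures uniformly comparable, so this piece contributes $\precsim r^{\delta-1}\mu(B((x,z),r))=o(\mu(B((x,z),r)))$ for $\mu$-a.e. center, for any $\delta>1$.

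The hard part --- and the only place where the restriction $\gamma<\sqrt2/2$ is needed --- is the second piece, i.e. showing that for $\mu$-a.e. $(x,z)$ and all small $r$ one has $\mu^s_x\bigl(B_\DD(z,r+r^\delta)\setminus B_\DD(z,r)\bigr)=o\bigl(\mu^s_x(B_\DD(z,r))\bigr)$. A $d$-adic-type Cantor structure appears in $\DD$, so such an estimate at every scale cannot be expected from soft arguments; one must exploit that the intermittent warping of the branches disperses the ``resonant'' radii. I would combine the symbolic description of $\mu^s_x$ with the backward polynomial contraction (P3) --- used through the diameter bound \eqref{diametre}, which lets one cover a $\DD$-ball of radius $\rho$ by cylinders of $\DD$-diameter $\precsim\rho$ whose number and $\mu^s_x$-mass are controlled by the return-time tail $\zeta=1/\gamma$ --- together with a Borel--Cantelli argument in the spirit of the proof of Theorem~\ref{thm:recrate} to bound the annulus mass by $r^{c}\mu^s_x(B_\DD(z,r))$ for some $c>0$ and a suitable $\delta\in(1,\alpha\dim_H\mu)$. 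Running the exponent bookkeeping with $\zeta=1/\gamma$, $\alpha=1+1/\gamma$ and $\dim_H\mu\ge1$ shows that an admissible $\delta$ exists precisely when $\gamma<\sqrt2/2$. Once \eqref{couronne} is secured, Theorem~\ref{THM} applies verbatim and yields the Poisson law \eqref{eq:cvpoisson} for $\mu$-a.e. $x$; the fibre corona estimate at all scales is the expected obstacle, everything else being a routine verification of the hypotheses.
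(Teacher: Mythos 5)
Your verification of the tower-model hypotheses and the split of the corona into a $\TT^1$-direction piece and a $\DD$-direction piece is accurate and matches the paper's framing: the paper likewise cites Alves--Pinheiro for $\zeta=1/\gamma$, $\alpha=1+1/\gamma$, and reduces the proof to Proposition~\ref{propSolenoide} (the corona condition for $d$). Your treatment of the $\TT^1$-marginal piece is also essentially the paper's.

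The genuine gap is the fibre piece, which is exactly the part you flag as the heart of the matter and then defer. You propose a Borel--Cantelli scheme based on the tower diameter bound~\eqref{diametre} and the return-time tail $\zeta$, but this does not engage the actual geometric obstruction. The cylinders $\Z_{-n-k}^0$ are not vertical sets in $\{x\}\times\DD$ on which the tail $\zeta$ controls number and mass; they are thin tubes around nearly-horizontal curves with slopes $u_{Z'}$ bounded away from $0$ and $\infty$. As a result, deciding how much a cylinder $Z'$ contributes to the corona $(x-r,x+r)\times\mathcal C_{r,s}(z)$ depends on whether its tube crosses the corona transversally or tangentially, not on a purely one-dimensional covering count. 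The paper's argument is built entirely around this dichotomy: one introduces an auxiliary angle scale $\beta=(s/r)n^\nu$, shows transversal cylinders each meet $W$ in an interval of $\TT^1$-length $\lesssim s/\beta$, while non-transversal ones (which must cross near-tangentially) are trapped in two small boxes $NT$; the measures of both families are then controlled using the precise cylinder-measure estimates~\eqref{Ok1}--\eqref{Ok2}, which are sensitive to the run of leading zeros in the code. Controlling that zero-run for a typical centre requires Lemma~\ref{elln} (an almost-sure maximal inequality from Dedecker--Gou\"ezel--Merlev\`ede giving $\ell_n=o(n^{1/p})$ for any $p<1/\gamma$), which is absent from your outline. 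The constraint $\gamma<\sqrt2/2$ does not come out of a soft covering/Borel--Cantelli scheme; it comes out of the final balance among the four terms $(I)$--$(IV)$ that this tube/transversality decomposition produces, where the admissibility of an exponent $\nu\in(1/p,1/\gamma)$ with the extra requirement from $(III)$ forces precisely $\gamma<1/\sqrt2$. Your proposal asserts that "exponent bookkeeping" yields this threshold but supplies neither the decomposition that produces the exponents nor the zero-run control that makes them finite, so as written it is not a proof of the corona condition and hence not a proof of Theorem~\ref{thm:solenoid}.
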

The only condition that is left to verify to prove Theorem \ref{thm:solenoid} is that \eqref{couronne} about coronas.
This condition is in principle highly dependent on the choice of the metric we put on $\M$.
We will prove it for $d$, which is the most natural metric on $\TT^1\times\CC$, but the proof could be adapted 
to the Euclidean metric as well.
\begin{proposition}\label{propSolenoide}
If $\gamma<\sqrt{2}/2$, for $\mu$-almost every $x\in\mathcal M$,
the assumption \eqref{couronne} on the coronas is satisfied.
\end{proposition}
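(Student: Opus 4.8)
The plan is to disintegrate the SRB measure $\mu$ of the solenoid along the fibers $\{x\}\times\DD$ and show that the conditional measures put mass $o(\mu(B((x_0,z_0),r)))$ on the relevant corona, uniformly enough to integrate. First I would describe the geometry: for the product metric $d((x,z),(x',z'))=\max(|x-x'|,|z-z'|)$, the ball $B((x_0,z_0),r)$ is the ``cylinder'' $\{|x-x_0|<r\}\times\{|z-z_0|<r\}$, and the corona $B((x_0,z_0),r+r^\delta)\setminus B((x_0,z_0),r)$ splits into two pieces: a ``horizontal'' piece where $r<|x-x_0|<r+r^\delta$ (and $|z-z_0|<r+r^\delta$) and a ``vertical'' piece where $|x-x_0|<r+r^\delta$ and $r<|z-z_0|<r+r^\delta$.

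The horizontal piece is controlled by the base dynamics: the invariant density of $g$ on $\TT^1$ (the Manneville--Pomeau-type map) behaves like $|x|^{-\gamma}$ near the neutral fixed point and is bounded and bounded away from $0$ elsewhere, so for Lebesgue-a.e. $x_0$ the $x$-marginal of $\mu$ gives an annulus of width $r^\delta$ at radius $r$ a mass $\approx r^\delta$ (times a local density factor), while $\mu(B((x_0,z_0),r))\approx r\cdot(\text{fiber mass at scale }r)$; since $\delta>1$ this piece is $o(\mu(B))$ once the fiber direction is understood. For the vertical piece, the point is that on each fiber $\{x\}\times\DD$ the conditional measure $\mu_x$ of $\mu$ is a self-similar-like measure: the fiber map $z\mapsto\theta z+\frac12 e^{2i\pi x}$ is a contraction by $\theta$, and pushing forward along backward branches of $g$ expresses $\mu_x$ as an infinite convex combination of rescaled copies of itself at contraction ratios $\theta^k$. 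I would use this to bound $\mu_x(\{r<|z-z_0|<r+r^\delta\})$ by comparing with a measure of positive Hausdorff dimension $\dim_z$ in the $z$-direction; the key quantitative input is that the lower pointwise dimension of $\mu_x$ in the $z$-variable is strictly positive and that an annulus estimate $\mu_x(\{r<|z-z_0|<r+r^\delta\})=o(\mu_x(B_\DD(z_0,r)))$ holds for $\mu$-a.e. $(x,z_0)$. This is exactly where the constraint on $\delta\in(1,\alpha\dim_H\mu)$ enters — one needs $\delta$ small enough relative to the product of the contraction exponent and the total dimension so that the Hölder modulus of the measure in the $z$-direction beats $r^\delta$; the arithmetic relating $\theta$, $\gamma$ (through $\alpha=1+1/\gamma$ and $\zeta=1/\gamma$), and $\dim_H\mu$ is what forces $\gamma<\sqrt2/2$.

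Concretely the steps are: (i) compute $\dim_H\mu=1+\dim_z$ where $\dim_z$ is the a.e. pointwise dimension of the fiber conditionals, and identify $\dim_z$ via the Lyapunov exponents — the $z$-contraction rate is $-\log\theta$ and the relevant expansion of $g$ gives $\dim_z=h_\mu/(-\log\theta)$ type formula, combined with a pressure/transfer-operator computation for the intermittent base to see that $\dim_H\mu>\frac{2/\gamma}{1+1/\gamma}=\frac{2}{\gamma+1}$, which together with $\alpha=1+1/\gamma$ makes the interval $(1,\alpha\dim_H\mu)$ contain a $\delta$ with $\delta>1$ large enough; (ii) establish the a.e. annulus bound $\mu_x(\text{annulus of width }r^\delta\text{ at radius }r)=o(\mu_x(B_\DD(z_0,r)))$ using the self-affine structure of $\mu_x$ and a Borel--Cantelli / Frostman-type argument along a geometric scale, exactly analogous to Proposition \ref{pro:subseq} but upgraded to all $r$ by the extra smoothness available here; (iii) combine horizontal and vertical estimates via Fubini. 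The main obstacle is step (ii): controlling an annulus (rather than a ball) for the fiber measure $\mu_x$, which is not absolutely continuous, and doing so for a genuine sequence $r\to0$ rather than a subsequence. I expect to handle this by exploiting that the fiber measure, while singular, is the distribution of a rapidly converging random series $\sum_k \theta^k \tfrac12 e^{2i\pi x_k}$ whose tail beyond scale $\theta^m$ is uniformly $O(\theta^m)$, so the ``boundary layer'' of a ball has measure that decays like a positive power of its width — precisely the estimate needed, provided $-\log\theta$ is large enough compared to the base expansion, i.e. provided $\gamma<\sqrt2/2$.
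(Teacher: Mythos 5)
Your horizontal/vertical decomposition of the corona is exactly what the paper does, and you correctly identify the main obstacle: controlling an annulus (not a ball) for a singular measure, for \emph{every} $r\to0$ rather than a subsequence. But your proposed resolution of that obstacle does not work, and the paper's actual argument is of a different nature. You want to reduce the vertical piece to a fiber-wise estimate $\mu_{x'}(\C_{r,r^\delta}(z))=o(\mu_{x'}(B_\DD(z,r)))$, and you claim this follows because the fiber measure is the law of a rapidly convergent series whose tail at scale $\theta^m$ is $O(\theta^m)$, so ``the boundary layer of a ball decays like a positive power of its width.'' That last step is essentially asking for Ahlfors-type regularity of the fiber conditional, and it fails here: because of the intermittency the weights that $\mu_x$ assigns to the Cantor cylinders are governed by $|(g^n)'|$ along orbits that may linger near the neutral fixed point, hence are wildly non-uniform, and the tail bound $O(\theta^m)$ on the random series only controls the \emph{support} at scale $\theta^m$, not the mass near a circle of radius $r\gg\theta^m$. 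This is precisely why a corona estimate valid for all radii is delicate and why Proposition~\ref{pro:subseq} in the paper only gives it along subsequences in general.

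The paper's proof for the vertical piece does not try to establish a fiber-wise annulus bound at all. It uses the two-dimensional Markov structure: cylinders $Z'\in\Z_{-n-k}^0$ are thin tubes around curves $\gamma_{\xi,n}$ whose complex slope $u_n$ is shown to be bounded away from $0$ and $\infty$, so the tubes sweep across the vertical direction as $x'$ ranges over $(x-r,x+r)$. The crucial point you are missing is the resulting transversality dichotomy: a cylinder intersecting the corona either crosses the circle of radius $r$ at an angle $\ge\beta$ (so the $x'$-interval where it lies in the corona has length $\lesssim s/\beta$) or is nearly tangent (non-transversal), and those few tangential cylinders are confined to two small boxes, with their number bounded by $\approx r/s$ and their measures controlled via the cylinder estimate~\eqref{Ok2} and the almost-sure bound on $\ell_n$ from Lemma~\ref{elln}. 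Averaging in $x'$ over the sweep is what replaces the fiber-wise annulus estimate you were hoping for, and it is from the final optimization over the free exponent $\nu$ in $\beta=\frac{s}{r}n^\nu$ and the moment parameter $p$ of Lemma~\ref{elln} that the constraint $\gamma<1/\sqrt2$ actually emerges; your dimension-counting sketch (e.g.\ ``$\dim_H\mu>2/(\gamma+1)$'') neither establishes that bound nor derives the threshold, and as written it is an assertion rather than a computation.
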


The measure $\mu$ is supported on the attractor $\Lambda=\bigcap_{n\ge0} f^n\M$.
Let $\Z$ be the essential partition of $\M$ into $K_j=J_j\times\DD$ ($j=0,...,d-1$). 
This is a Markov partition for $f$. We denote by $\Z_{m}^n=\bigvee_{j=m}^{n-1}f^{-j}\Z$.
Its elements $Z\in\Z_m^n$ will be denoted indifferently by their code $[a_m,\ldots,a_{n-1}]$ meaning that $f^jZ\in K_{a_j}$ for any $j=m,\ldots,n-1$.

Let $\bar\mu$ be the marginal of $\mu$ on $\TT^1$. It is indeed the SRB measure of $g$, we denote its density $\bar h$. 
The partition (mod $\bar\mu$) $\bar\Z=\{J_0,\cdots,J_{d-1}\}$ is again a Markov partition for $g$.

Let $\bar\Z^n=\bar\Z\vee g^{-1}\bar\Z\vee\cdots\vee g^{-n+1}\bar\Z$. We denote indifferently by $[a_0\ldots a_{n-1}]\in \bar\Z^n$ the element $Z\in\bar\Z^n$ such that $g^jZ\subset J_{a_j}$ for any $j=0,\ldots,n$.

We collect below some elementary facts on the interval map $g$ needed to study its statistical properties.
\begin{proposition}[e.g. \cite{lsv,young99}]
(i) The density is uniformly controlled on cylinders:
\begin{equation}\label{H}
H:= \sup_{n} \sup_{ [0^n]\neq \bar Z\in\bar Z^{n}}\frac{\max_{\bar Z} \bar h}{\min_{\bar Z} \bar h}<\infty.
\end{equation}
(ii)
For any $n_0$ there exists a constant $D(n_0)>0$ such that:
for any integer $n\ge n_0$, for any $y,y' \in Z=[a_0\ldots a_{n-1}]\in \bar\Z^n$  such that $a_{n-n_0}\neq0$ we have
\begin{equation} \label{dis1}
\frac{(g^n)'(y')}{(g^n)'(y)} \le D(n_0) e^{D(n_0)|g^n(y')-g^n(y)|}.
\end{equation}
In particular for any subintervals $I,J\subset \bar Z$
\begin{equation}\label{dis2}
\frac{|g^nI|}{|g^nJ|} \le D(n_0) e^{D(n_0)} \frac{|I|}{|J|}.
\end{equation}

(iii) There exists a constant $D_1>0$ such that, for every nonnegative integers $m$ and $k$ and,
for every cylinder $ \bar Z=[0^ka_{k+1}\cdots a_{k+m}]\in\bar{\mathcal Z}^{m+k}$ with $a_{k+1}\ne 0$,
and any $z\in \bar Z$ we have
\begin{equation}\label{Ok1} 
\frac{k+1}{D_1}\le \bar h(z)\le D_1 (k+1),\ \ \frac{(k+1)^{-1-{1/\gamma}}}{D_1}\le\frac{| \bar Z|}{|[a_{k+1}\cdots a_{k+m}]|}\le D_1 (k+1)^{-1-{1/\gamma}}
\end{equation}
and so
\begin{equation}\label{Ok2}
\bar\mu( \bar Z)\approx (k+1)^{-{1/\gamma}}|[a_{k+1}\cdots a_{k+m}]|.
\end{equation}
\end{proposition}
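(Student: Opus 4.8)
The plan is to split the corona into a \emph{horizontal} and a \emph{vertical} part, estimate each by a different argument, and fix $\delta\in(1,\alpha\dim_H\mu)$ only at the very end; the threshold $\gamma<\sqrt2/2$ will surface precisely as the condition making the resulting range of admissible $\delta$ nonempty. Fix $p_0=(x_0,z_0)\in\Lambda$ in the full-measure set where $x_0\neq0$, where the pointwise dimension relation \eqref{eq:dim} holds, and where the backward orbit $(x_{-j})_{j\ge1}$ underlying $z_0$ equidistributes. Since $d((x,z),(x',z'))=\max(|x-x'|,|z-z'|)$, the ball $B(p_0,r)$ is a box, so with $\rho:=r^{\delta}$ we have
\[
B(p_0,r+\rho)\setminus B(p_0,r)\ \subset\ \mathcal C^x_r\cup\mathcal C^z_r,
\]
where (intersecting with $\Lambda$) $\mathcal C^x_r=\{r\le|x-x_0|\le r+\rho\}\cap\{|z-z_0|\le r+\rho\}$ and $\mathcal C^z_r=\{|x-x_0|\le r+\rho\}\cap\{r\le|z-z_0|\le r+\rho\}$.

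The horizontal corona is routine. Projecting onto $\TT^1$, $\mu(\mathcal C^x_r)\le\bar\mu(\{x\in\TT^1:r\le|x-x_0|\le r+\rho\})$, the $\bar\mu$-mass of two intervals of length $\rho=r^{\delta}$ lying at distance $\ge|x_0|$ from the neutral point $0$ as soon as $r<|x_0|/2$; by local boundedness of the density $\bar h$ away from $0$ (a consequence of \eqref{H}, see also \eqref{Ok1}) this is $\precsim r^{\delta}$. Since \eqref{eq:dim} gives $\mu(B(p_0,r))=r^{\dim_H\mu+o(1)}$, we obtain $\mu(\mathcal C^x_r)=o(\mu(B(p_0,r)))$ as soon as $\delta>\dim_H\mu$.

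The vertical corona is the crux. On the attractor $z$ is a function of the past, $z=\sum_{j\ge1}\theta^{j-1}\tfrac12 e^{2i\pi x_{-j}}$, so freezing the first $N$ backward branch choices confines $z$ to a disk of radius $\precsim\theta^{N}$, and a.e.\ fibre $\{x\}\times\DD$ meets $\Lambda$ in a Cantor set carrying the stable conditional $\mu^s$ of $\mu$. I would take $N$ with $\theta^{N}\approx r^{\delta}$, cover $\mathcal C^z_r$ by the resulting vertical cylinders $W$, and keep only those whose $z$-range meets the annulus $\{r\le|z-z_0|\le r+r^{\delta}\}$. By the local product structure of $\mu$ together with $f$-invariance, $\mu(W)$ is comparable to $\bar\mu$ of a forward $g$-cylinder of depth $\approx N$ for ``good'' itineraries and of larger depth for itineraries with long $0^k$ blocks, the relevant sizes and $\bar\mu$-masses — in particular the inflation produced by excursions near the attracting value $z^\ast=\tfrac1{2(1-\theta)}$, which in the symbolic picture is exactly an excursion of $g$ near $0$ — being what \eqref{Ok1}--\eqref{Ok2} quantify. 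Summing over the retained cylinders and carrying out the intermittent bookkeeping through \eqref{Ok2} should yield, up to subpolynomial factors, a bound of the shape $\mu(\mathcal C^z_r)\precsim r^{\,1+\delta(\zeta-1)}$ with $\zeta=1/\gamma$; hence $\mu(\mathcal C^z_r)=o(\mu(B(p_0,r)))$ provided $1+\delta(\zeta-1)>\dim_H\mu$.

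It remains to choose $\delta$: we need $\delta>\dim_H\mu$, $\ \delta(\zeta-1)>\dim_H\mu-1$, and $\delta<\alpha\dim_H\mu=(1+\zeta)\dim_H\mu$. If $\zeta\ge2$ the middle inequality follows from the first; if $1<\zeta<\sqrt2$ combining the middle and last inequalities forces $(2-\zeta^2)\dim_H\mu<1$, which fails once $\dim_H\mu$ is large. Thus the three conditions are simultaneously solvable for \emph{every} value of $\dim_H\mu>1$ exactly in the range $\zeta>\sqrt2$, i.e.\ $\gamma<\sqrt2/2$, and then any $\delta$ slightly above $\max(\dim_H\mu,(\dim_H\mu-1)/(\zeta-1))$ works and lies automatically in $(1,\alpha\dim_H\mu)$. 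The main obstacle is the vertical estimate of the previous paragraph: the contracting fibres of the solenoid need not satisfy an open set condition, and near the neutral fixed point the preimage branches cluster and the vertical cylinders contract only polynomially, so a worst-case \emph{count} of cylinders meeting the thin annulus is too lossy; the substance of the proof is therefore a quantitative transversality estimate for the family of fibre Cantor sets, obtained by weighting cylinders by their $\bar\mu$-mass rather than counting them and summing the intermittent contributions through \eqref{H} and \eqref{Ok1}--\eqref{Ok2}. Once this is in place, a routine check that the sets of centres $p_0$ discarded along the way are $\mu$-null finishes the proof.
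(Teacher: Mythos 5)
Your proposal does not prove the statement at hand. The proposition you were asked to prove is a statement purely about the one--dimensional intermittent map $g$ on $\TT^1$ and its invariant density $\bar h$: the uniform bound \eqref{H} on the density ratio over cylinders $\bar Z\neq[0^n]$, the distortion estimates \eqref{dis1}--\eqref{dis2} on cylinders whose code has a nonzero symbol near the end, and the asymptotics \eqref{Ok1}--\eqref{Ok2} for the density and the length/measure of cylinders $[0^ka_{k+1}\cdots a_{k+m}]$ with a neutral prefix of length $k$ (this is exactly the standard Liverani--Saussol--Vaienti/Young analysis of the indifferent fixed point, which the paper simply quotes from \cite{lsv,young99}). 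What you wrote instead is a sketch of a proof of the corona condition \eqref{couronne} for the solenoid map, i.e.\ of Proposition~\ref{propSolenoide}, and in doing so you explicitly \emph{invoke} \eqref{H} and \eqref{Ok1}--\eqref{Ok2} as known facts. Relative to the stated goal this is circular: none of the three assertions (i)--(iii) is established anywhere in your text, and no argument is given for the key quantitative inputs they require (bounded distortion of $g^n$ away from the neutral fixed point, the growth $\bar h\approx k+1$ on the cylinders $[0^k\cdots]$ coming from the local behaviour $g(x)\approx x+x^{1+\gamma}$, and the length estimate $|\bar Z|/|[a_{k+1}\cdots a_{k+m}]|\approx(k+1)^{-1-1/\gamma}$ coming from the polynomial escape rate from the neutral fixed point).

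Even judged as a proof of Proposition~\ref{propSolenoide}, the sketch is incomplete where it matters: you yourself flag the vertical-corona estimate $\mu(\mathcal C^z_r)\precsim r^{1+\delta(\zeta-1)}$ as the ``main obstacle'' and only describe in words the transversality/weighting argument that would be needed, whereas in the paper this is precisely the content of the decomposition into transversal and non-transversal cylinder intersections, with the intermittent bookkeeping done through \eqref{Ok1}--\eqref{Ok2} and Lemma~\ref{elln}. So to answer the actual question you should return to the map $g$ itself: prove bounded distortion of $\log(g^n)'$ on cylinders with a nonzero symbol at position $n-n_0$ (summing the variations along the orbit, using that the orbit leaves a neighbourhood of $0$ before time $n_0$ from the end), deduce \eqref{H} from the usual representation of $\bar h$ as a limit of normalized push-forwards of Lebesgue (or from the explicit cone condition in \cite{lsv}), and obtain \eqref{Ok1}--\eqref{Ok2} from the asymptotics $x_k\approx k^{-1/\gamma}$ of the preimages $x_k$ of a reference point under the left branch, which give both the length of $[0^k\cdots]$ and the blow-up $\bar h(z)\approx k+1$ near $0$.
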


\begin{lemma}\label{elln}
Let $p<\min(2,{1/\gamma})$. For $\mu$-a.e. $\xi\in \M$ we have 
\[
\sum_{j=0}^\ell \mathbf 1_{K_0} (f^{-n+j}(\xi)) \le \mu(K_0)\ell+o(n^{1/p}).
\]
\end{lemma}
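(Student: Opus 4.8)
The plan is to centre the counting function and reduce matters to an almost sure growth bound for the Birkhoff sums of a single bounded observable, which will come from the polynomial mixing of the intermittent factor. Set $\phi:=\mathbf 1_{K_0}-\mu(K_0)$ and $S_N:=\sum_{m=0}^{N}\phi\circ f^{-m}$, with $S_{-1}:=0$. For $0\le\ell\le n$, the substitution $m=n-j$ gives
\[
\sum_{j=0}^{\ell}\mathbf 1_{K_0}(f^{-n+j}\xi)=(\ell+1)\mu(K_0)+S_n(\xi)-S_{n-\ell-1}(\xi)\le\mu(K_0)\,\ell+\mu(K_0)+2\max_{0\le M\le n}|S_M(\xi)|.
\]
Since $S_N(\xi)=o(N^{1/p})$ forces $\max_{0\le M\le n}|S_M(\xi)|=o(n^{1/p})$ (split the maximum over $M<M_0$ and $M_0\le M\le n$), the lemma follows once we show that $S_N(\xi)=o(N^{1/p})$ for $\mu$-a.e.\ $\xi$. (For $\ell=O(n)$ larger than $n$ one argues symmetrically, adding the forward Birkhoff sums $\sum_{i=1}^{\ell-n}\mathbf 1_{K_0}\circ f^i$, to which the same estimate applies.)

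Let $\varpi\colon\M\to\TT^1$, $\varpi(x,z)=x$; then $g\circ\varpi=\varpi\circ f$, $\varpi_*\mu=\bar\mu$ and $\mathbf 1_{K_0}=\mathbf 1_{J_0}\circ\varpi$, so $\phi=\bar\phi\circ\varpi$ with $\bar\phi:=\mathbf 1_{J_0}-\bar\mu(J_0)$. Write $\tilde S_a:=\sum_{i=0}^{a-1}\bar\phi\circ g^{i}$ for the forward Birkhoff sums of $g$. Then $S_m(f^N\xi)=\sum_{i=N-m}^{N}\phi(f^{i}\xi)=(\tilde S_{N+1}-\tilde S_{N-m})(\varpi\xi)$, and using that $f$ preserves $\mu$ and $\varpi_*\mu=\bar\mu$ we obtain, for every $q\ge1$,
\[
\Bigl\|\max_{0\le m\le N}|S_m|\Bigr\|_{L^q(\mu)}=\Bigl\|\max_{0\le a\le N}\bigl|\tilde S_{N+1}-\tilde S_{a}\bigr|\Bigr\|_{L^q(\bar\mu)}\le 2\Bigl\|\max_{0\le a\le N+1}|\tilde S_{a}|\Bigr\|_{L^q(\bar\mu)}.
\]
So everything is reduced to a maximal moment bound for the Birkhoff sums of $\mathbf 1_{J_0}$ over the interval map $g$.

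The bound I would prove is: for each $q\in[1,\min(2,1/\gamma))$,
\[
\Bigl\|\max_{1\le a\le M}|\tilde S_{a}|\Bigr\|_{L^q(\bar\mu)}\precsim M^{1/q}\qquad(M\ge1).
\]
This is where the structure of $g$ enters. The set $J_1\cup\cdots\cup J_{d-1}=\TT^1\setminus\overline{J_0}$ is the base of a Gibbs--Markov inducing scheme for $g$, whose return time $\rho$ has polynomial tail of exponent $\zeta=1/\gamma$ (this is \eqref{poly}, and follows also from \eqref{Ok2}), so $\mathbb E_{\bar\mu}[\rho^q]<\infty$ exactly for $q<1/\gamma$. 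Modulo $\bar\mu$ one has $\bar\phi=-\bigl(\mathbf 1_{\TT^1\setminus\overline{J_0}}-\bar\mu(\TT^1\setminus\overline{J_0})\bigr)$, hence $\tilde S_a=-\bigl(N^{\mathrm{base}}_a-a\,\bar\mu(\TT^1\setminus\overline{J_0})\bigr)$ is, up to sign, the fluctuation of the number $N^{\mathrm{base}}_a$ of visits to the base. The maximal moment estimate then follows from a von Bahr--Esseen / maximal Marcinkiewicz--Zygmund inequality for the partial sums of $\rho$ along the (exponentially mixing) induced Gibbs--Markov map, combined with the usual renewal (Anscombe) estimate; for $\gamma\le1/2$ the correlations of $\bar\phi$ are summable (cf.\ \eqref{VariationTotale} and Lemma~\ref{LEM0}) and it reduces to the Rademacher--Menshov/M\'oricz maximal inequality (see also \cite{young99,lsv}).

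Finally, given $p<\min(2,1/\gamma)$, pick $q$ with $\max(1,p)<q<\min(2,1/\gamma)$, which is possible since $1/\gamma>1$. By Chebyshev's inequality together with the last two displays, for every $\varepsilon>0$,
\[
\sum_{k\ge1}\mu\Bigl(\max_{0\le m\le 2^{k}}|S_m|>\varepsilon\,2^{k/p}\Bigr)\precsim\varepsilon^{-q}\sum_{k\ge1}2^{k(1-q/p)}<\infty
\]
since $q>p$. By Borel--Cantelli, $\max_{m\le 2^{k}}|S_m|=o(2^{k/p})$ for $\mu$-a.e.\ $\xi$; as $k\mapsto\max_{m\le 2^{k}}|S_m|$ is nondecreasing, for $2^{k}\le N<2^{k+1}$ this gives $|S_N|\le\max_{m\le 2^{k+1}}|S_m|=o(2^{(k+1)/p})=o(N^{1/p})$, which is the required estimate. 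The one genuinely delicate ingredient is the maximal moment bound for $\tilde S_a$ in the range $1/2<\gamma<1$, i.e.\ the non-summable, heavy-tailed regime where $\tilde S_a$, suitably normalized, converges to a stable law of index $1/\gamma$; the algebraic reduction, the passage from $f$ to the factor $g$ and from backward to forward sums, and the dyadic Borel--Cantelli argument are all routine.
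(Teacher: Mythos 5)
Your argument is essentially the same as the paper's: both reduce the claim, via $f$-invariance of $\mu$ and projection to the intermittent factor $g$, to a maximal almost-sure estimate for the centered Birkhoff sums of $\mathbf 1_{J_0}$ under $g$, which is precisely what Theorem~1.6 of \cite{dgm} supplies. The paper cites that result directly in its $\sum_n \frac1n\bar\mu(\cdot)<\infty$ form, whereas you unpack it as a maximal $L^q$-moment bound plus Chebyshev and dyadic Borel--Cantelli — a correct and standard repackaging of the same input, with the same delicate ingredient (the maximal inequality in the heavy-tailed regime $1/2<\gamma<1$) ultimately coming from \cite{dgm}.
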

\begin{proof}
By Theorem~1.6 in \cite{dgm} we have 
\[
\sum_{n=0}^\infty \frac1n \bar\mu\left(\left\{ x\colon \max_{1\le k\le n}\left|\sum_{j=0}^k \mathbf 1_{J_0}(g^jx)-k\bar\mu(J_0)\right|>n^{1/p}
\right\}\right)<\infty
\]
and using the invariance of $\mu$ we get that 
\[
\sum_{n=0}^\infty \frac1n \mu\left(\left\{ \xi\colon \max_{1\le k\le n}\left|\sum_{j=0}^k \mathbf 1_{K_0}(f^{-n+j}\xi)-k\mu(K_0)\right|>n^{1/p}\right\}\right)<\infty.
\]
We now conclude accordingly that for $\mu$-a.e. $\xi$
\[
\limsup_{n}  \frac{|\sum_{j=0}^k \mathbf 1_{K_0}(f^{-n+j}\xi)-k\mu(K_0)|}{n^{1/p}}<\infty.
\]
The conclusion follows since $p<{1/\gamma}$ was arbitrary.
\end{proof}

{} From now on we fix $p<{1/\gamma}$ and $\xi=(x,z)\in \Lambda$ satisfying the conclusion of Lemma~\ref{elln}.
The code $\Z_{-n}^0(\xi)=[a_{-n},\ldots,a_{-1}]$ is such that $\ell_n=o(n^{1/p})$, where $\ell_n$ is the minimal integer $\ell\ge0$ such that $a_{-n+\ell}\neq0$. 

We also take $n_0\ge1$ the minimal integer such that $a_{-n_0}\neq0$.

An immediate computation gives
\begin{equation}\label{eq:fnxz}
f^n(x,z) = (g^n(x),\theta^n z + \frac12 \sum_{j=0}^{n-1} \theta^{n-1-j} e^{2i\pi g^j(x)}).
\end{equation}
We consider the natural projections $\pi_X: \mathcal M\rightarrow\mathbb T^1$ and 
$\pi_{\mathbb D}: \mathcal M\rightarrow\mathbb D$.
\begin{lemma}\label{cylindreout}
For any $\xi=(x,z)\in\M\setminus\partial\Z$ and any $r<|x-0|$ we have 
\[
f^{n+k_0}(\M) \cap B(\xi,r) \subset \Z_{-n}^0(\xi).
\]
with $n=\lfloor \log(r)/\log(\theta)\rfloor-k_0$, for some constant $k_0$ independent of $\xi,r$.
\end{lemma}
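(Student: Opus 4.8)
The plan is to exploit the explicit skew-product formula \eqref{eq:fnxz} together with the contraction in the $\DD$-fibre to show that a point in $B(\xi,r)$ which stays in $f^{n+k_0}(\M)$ must share the first $n$ symbols of the past code of $\xi$. First I would note that any point of $f^{n+k_0}(\M)\cap B(\xi,r)$ is of the form $f^{n+k_0}(\eta)$ with $\eta\in\M$, so by \eqref{eq:fnxz} its $\DD$-coordinate differs from $\frac12\sum_{j=0}^{n+k_0-1}\theta^{n+k_0-1-j}e^{2i\pi g^j(\pi_X\eta)}$ by at most $\theta^{n+k_0}$ (since $|z|\le1$ on $\DD$). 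The same holds for $\xi=f^{n+k_0}(\zeta)$ for a suitable preimage $\zeta$, and the whole point of the solenoid construction is that the $\DD$-coordinate of $f^m(w,\cdot)$ is, up to an error $O(\theta^m)$, a function $G_m(w)$ of the base point alone that is injective (even bi-Lipschitz) on $\TT^1$ when restricted to a single branch of $g^{-m}$; more precisely, $G_m$ encodes the itinerary of the base orbit. Thus the constraint $|\pi_\DD f^{n+k_0}(\eta)-\pi_\DD\xi|<r$ forces the base orbits $g^j(\pi_X\eta)$ and $g^j(x)$, for $j=0,\ldots,n-1$, to lie in the same element of $\bar\Z$, which is exactly the statement that $\eta\in\Z_{-n}^0(\xi)$ after translating back through $f^{n+k_0}$.

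The key steps, in order, would be: (1) write down $\pi_\DD f^m(w,z) = \theta^m z + \frac12\sum_{j=0}^{m-1}\theta^{m-1-j}e^{2i\pi g^j(\pi_X w)}$ and isolate the $\theta^m z$ term as an $O(\theta^m)$ error; (2) observe that the remaining sum is dominated by its last few terms $\frac12 e^{2i\pi g^{m-1}(\pi_X w)} + \frac\theta2 e^{2i\pi g^{m-2}(\pi_X w)}+\cdots$, and that a change in one of the symbols $a_{-k}$ of the past code moves $g^{m-k}(\pi_X\eta)$ into a different interval $J_i$, hence moves $e^{2i\pi g^{m-k}(\pi_X\eta)}$ by a definite amount away from $e^{2i\pi g^{m-k}(x)}$ unless the base points are close; (3) quantify this: since the branches $J_0,\ldots,J_{d-1}$ are separated and $g$ maps each onto $\TT^1\setminus\{0\}$, if $\pi_X\eta$ and $x$ have distinct codes at position $-k$ (with $k$ minimal) then $|\pi_\DD f^m(\eta)-\pi_\DD f^m(\xi)| \ge c\,\theta^{k-1}$ for some $c>0$, up to the $O(\theta^m)$ errors and the contribution of the yet-earlier symbols which is $O(\theta^{k})$; (4) choose $k_0$ so that $c\theta^{k-1}$ beats the combined error terms, and then the hypothesis $r<\theta^{n+k_0}\cdot(\text{const})$, i.e. $n = \lfloor\log r/\log\theta\rfloor-k_0$, guarantees that no such disagreement at position $-k$ with $k\le n$ is possible, which is the claim.

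The main obstacle I anticipate is step (3): making rigorous the claim that disagreement of the base codes at a given past position produces a \emph{quantitatively} non-negligible displacement in the $\DD$-coordinate, while controlling the accumulated contribution of all earlier (deeper past) symbols, whose combined effect is a geometric tail of size $O(\theta^{k})$. One has to verify that the single dominant term of size $\approx\theta^{k-1}$ is genuinely not cancelled — this uses that the points $\{e^{2i\pi y}: y\in \overline{J_i}\}$, $i=0,\ldots,d-1$, occupy disjoint closed arcs of the circle (which is where the condition $r<|x-0|$ enters, keeping us away from the cusp at $0$ where the branches meet), so two distinct branches give images separated by a fixed angular gap. The role of the hypothesis $\xi\in\M\setminus\partial\Z$ is precisely to ensure $x$ lies in the interior of some $J_{a_{-k}}$, so that for $\eta$ close enough to the relevant preimage the only way to have a different code is a genuine jump to another branch; and the role of $r<|x-0|$ is to keep $B(\xi,r)$ within a single element of $\Z_0^1$ so the argument bootstraps cleanly from $j=0$. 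Once this quantitative separation is in hand, assembling the geometric series and solving for the largest admissible $n$ is routine and yields the stated $n=\lfloor\log r/\log\theta\rfloor-k_0$.
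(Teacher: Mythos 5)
Your overall strategy is the same as the paper's—use the explicit skew-product formula \eqref{eq:fnxz}, locate the most recent position $-k$ at which the past codes disagree, and show that the associated term $\theta^{k-1}(e^{2i\pi\cdot}-e^{2i\pi\cdot})$ dominates everything else unless $k>n$. The paper, however, first inserts a crucial reduction (its Claim 1): it replaces the comparison point $\xi'$ by a point $\xi''$ in the same $n$-cylinder that has \emph{exactly the same} $\TT^1$-coordinate as $\xi$, at the cost of an extra $O(r)$ in the $\DD$-coordinate. This projection makes the two $g$-orbits of the $n$-preimages coalesce at some time $k+1\le n$; for $j>k$ the terms cancel identically, and at $j=k$ the two points are two \emph{distinct preimages of the same point} under $g$, hence genuinely separated by $\ge 1/\|g'\|_\infty$ (this is an honest uniform bound because $|J_i|\ge 1/\|g'\|_\infty$). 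You skip this step, and that is where your step (3) breaks.

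Two concrete problems with your step (3) as written. First, you invoke ``the points $\{e^{2i\pi y}:y\in\overline{J_i}\}$ occupy disjoint closed arcs of the circle'' — this is false: the arcs tile the whole circle and adjacent ones share an endpoint, so two points in different $J_i$'s (hence with distinct code at $-k$) can have arbitrarily close exponentials. The condition $r<|x-0|$ does not help here because the relevant points are the deep preimages $g^{m-k}(\cdot)$, not $x$ itself. What does rescue you is that the codes agree at positions $-1,\dots,-(k-1)$, so by $1$-Lipschitz inverse branches the images $g^{m-k+1}(\pi_X\eta)$ and $g^{m-k+1}(\pi_X\zeta)$ are within $r$; then the two points at level $-k$ are, up to an $O(r)$ correction, two distinct preimages of a \emph{common} point, and the paper's separation bound applies. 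You need to spell this out. Second, you omit an error term: without the paper's projection the exponentials at the \emph{agreeing} positions $-1,\dots,-(k-1)$ do not cancel exactly — they contribute $\sum_{k'<k}\theta^{k'-1}O(r)=O(r)$. You only mention the $O(\theta^m)$ and $O(\theta^k)$ tails. The $O(r)$ term turns out to be harmless (since $r\approx\theta^m$), but it must be in the budget. Once these are fixed your route and the paper's are essentially equivalent; the paper's Claim 1 is just a slick device that makes the cancellation exact rather than up to $O(r)$.
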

Observe that, with the notations of this lemma, we have $\theta^{n+k_0+1}<r\le\theta^{n+k_0}$.
\begin{proof}[Proof of Lemma \ref{cylindreout}]
\begin{claim}
Let $\xi'\in f^n(\M)$ such that $d(\xi,\xi')<r$.
There exists $\xi''\in\Z_{-n}^0(\xi')$ such that $\pi_X\xi=\pi_X\xi''$ and $|\pi_\DD\xi-\pi_\DD\xi''|<4r$.
\end{claim}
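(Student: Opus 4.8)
The plan is to build the point $\xi''$ by keeping the $\mathbb T^1$-coordinate fixed and adjusting the $\mathbb D$-coordinate so that the result lands in the same cylinder $\mathcal Z_{-n}^0(\xi')$ as $\xi'$. First I would use the explicit formula \eqref{eq:fnxz}: a point in $f^n(\M)$ is of the form $f^n(\eta)$ for some $\eta=(u,w)\in\M$, so its $\mathbb D$-component is $\theta^n w + \frac12\sum_{j=0}^{n-1}\theta^{n-1-j}e^{2i\pi g^j(u)}$, which depends on $w$ only through the term $\theta^n w$, of modulus at most $\theta^n$. Thus for \emph{any} prescribed $x$-value in the interval $g^n(\text{appropriate cylinder})$ there is a point of $f^n(\M)$ over it, and changing the $w$ we started from moves the $\mathbb D$-coordinate by at most $2\theta^n<2r$ (using $\theta^{n+k_0+1}<r$ and absorbing constants into the choice of $k_0$).

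Next I would exploit that $\xi'\in f^n(\M)$ with $d(\xi,\xi')<r<|x-0|$. Since $\pi_X\xi'$ is within $r$ of $x$ and $r<|x-0|$, the whole segment between $\pi_X\xi$ and $\pi_X\xi'$ avoids $0$, hence stays in the interior of the cylinder $[a_{-1}]$; iterating $g$ backwards (equivalently, using that $f^n(\M)$ restricted over a cylinder $[a_{-n},\ldots,a_{-1}]$ projects onto that cylinder in the $X$-coordinate), the point of $f^n(\M)$ lying exactly over $\pi_X\xi$ is still in $\mathcal Z_{-n}^0(\xi')$. Call that point $\xi''$. Then $\pi_X\xi''=\pi_X\xi$ by construction, and $\pi_\DD\xi''$ differs from $\pi_\DD\xi'$ only through the $\theta^n w$ term, so $|\pi_\DD\xi''-\pi_\DD\xi'|\le 2\theta^n$; combining with $|\pi_\DD\xi-\pi_\DD\xi'|<r$ and $2\theta^n< 2r$ (again by the relation $\theta^{n+k_0+1}<r$, with $k_0$ chosen large enough that $2\theta^{k_0}\le 1$, say) yields $|\pi_\DD\xi-\pi_\DD\xi''|<4r$ as claimed.

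Granting the Claim, the lemma follows quickly: given $\xi\in\M\setminus\partial\mathcal Z$ and $\xi'\in f^{n+k_0}(\M)\cap B(\xi,r)$, the Claim produces $\xi''\in\mathcal Z_{-n}^0(\xi')$ with $\pi_X\xi''=\pi_X\xi$; since $\xi\notin\partial\mathcal Z$, the cylinder $\mathcal Z_{-n}^0$ containing a point is determined by its $X$-coordinate alone, so $\mathcal Z_{-n}^0(\xi')=\mathcal Z_{-n}^0(\xi'')=\mathcal Z_{-n}^0(\xi)$, whence $\xi'\in\mathcal Z_{-n}^0(\xi)$.

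The main obstacle I expect is the bookkeeping around the boundary of the partition and the constant $k_0$: one must ensure that moving the $\mathbb D$-coordinate by $2\theta^n$ never pushes a point across $\partial\mathcal Z$ (this is fine because the partition $\mathcal Z=\{J_j\times\DD\}$ is a product and its atoms are cylinders in the $X$-direction only, so $\mathbb D$-perturbations are harmless), and that the off-by-$k_0$ shift between $n$ and $n+k_0$ in the statement is exactly what is needed so that $\theta^{n}$ is comparable to $r$ with the right constants. Once the product structure of $\mathcal Z$ is used to reduce everything to the one-dimensional map $g$, the argument is essentially the observation that $f$ contracts the fibers uniformly by $\theta$.
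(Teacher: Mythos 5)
The argument breaks at the key estimate. You assert that $\pi_\DD\xi''$ differs from $\pi_\DD\xi'$ ``only through the $\theta^n w$ term,'' but $\xi''$ and $\xi'$ do \emph{not} sit over the same base point. Writing $\xi'=f^n(u,w)$ with $g^n(u)=\pi_X\xi'$, and $\xi''=f^n(y,w)$ with $g^n(y)=\pi_X\xi$ (same $w$, $y$ the preimage of $\pi_X\xi$ in the branch of $u$), formula~\eqref{eq:fnxz} gives
\[
\pi_\DD\xi''-\pi_\DD\xi' \;=\;\tfrac12\sum_{j=0}^{n-1}\theta^{n-1-j}\bigl(e^{2i\pi g^j(y)}-e^{2i\pi g^j(u)}\bigr),
\]
which has nothing to do with $\theta^n w$; the $\theta^n w$ term is identical for both points. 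Controlling this sum is the real content of the Claim: one needs the (weak) expansion of $g$ within the common branch to get $|g^j(y)-g^j(u)|\le|g^n(y)-g^n(u)|=|\pi_X\xi-\pi_X\xi'|<r$ for all $j<n$, so that each summand is at most $2\pi r$, and then the geometric series gives $|\pi_\DD\xi''-\pi_\DD\xi'|\le \pi r/(1-\theta)$, which combined with $|\pi_\DD\xi-\pi_\DD\xi'|<r$ yields the stated bound. Your proposal omits this step entirely, so the $4r$ bound is unjustified.

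There is a second, independent problem: the inequality $2\theta^n<2r$ runs the wrong way. With the Lemma's normalization $n=\lfloor\log r/\log\theta\rfloor - k_0$ one has $r\le\theta^{n+k_0}$, hence $\theta^n\ge r\,\theta^{-k_0}\ge r$, so $\theta^n$ is at \emph{least} $r$. (This is also why the concluding remark that ``the cylinder $\Z_{-n}^0$ containing a point is determined by its $X$-coordinate alone'' cannot be right: over a fixed $x\ne0$, the set $f^n(\M)\cap\pi_X^{-1}(x)$ consists of $d^n$ pairwise disjoint discs of radius $\theta^n$ in $\DD$, one per branch of $g^n$, and points in different discs lie in different atoms of $\Z_{-n}^0$. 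Separating these discs by a $4r$-margin is precisely what the second Claim does, and is the reason the constant $k_0$ must appear.)
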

\begin{proof}
Taking $y$ the preimage by $g^n$ of $\pi_X\xi$ lying in the same branch of $g^n$ than $\pi_Xf^{-n}(\xi')$ gives
a point $\xi''=f^n(y,\pi_\DD(f^{-n}(\xi)))\in\Z_{-n}^0(\xi')$ such that $\pi_X\xi=\pi_X\xi''$ and 
$|\pi_\DD\xi-\pi_\DD\xi''|<\frac{\pi}{1-\theta}r<4r$. 
\end{proof}

\begin{claim}
There exists a constant $k_0$ such that $\pi_Xf^{-n+k_0}\xi=\pi_Xf^{-n+k_0}\xi''$, in particular $\xi''\in\Z_{-n+k_0}^0(\xi)$, where $n$ is the smallest integer such that $r<\theta^n$.
\end{claim}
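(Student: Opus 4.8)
The plan is to exploit the fact that in the $X$-direction the dynamics is exactly the interval map $g$, which is uniformly expanding away from the neutral fixed point $0$, together with the a priori choice of $\xi$ made just before Lemma~\ref{cylindreout} (namely that $\xi=(x,z)\in\Lambda$ satisfies the conclusion of Lemma~\ref{elln}, so that the backward itinerary of $x$ spends only $\ell_n=o(n^{1/p})$ consecutive steps in $J_0=K_0$ before time $-n$). The two points $\xi$ and $\xi''$ already share the same $X$-coordinate \emph{after} time $n$: the first Claim produces $\xi''\in\Z_{-n}^0(\xi')$ with $\pi_X\xi=\pi_X\xi''$, hence $\pi_Xf^{-j}\xi$ and $\pi_Xf^{-j}\xi''$ lie in the same branch $J_{a_{-j}}$ of $g$ for each $j=0,\dots,n$, and they have the same image under $g^n$. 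So the only thing to control is that, going backward from time $0$, the two preimages under the appropriate branch of $g^n$ actually coincide after a bounded number $k_0$ of steps — equivalently, that the distance $|\pi_Xf^{-j}\xi-\pi_Xf^{-j}\xi''|$ collapses to $0$ once $j\ge k_0$.

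First I would set $u_j=g^{-j}$-preimage of $\pi_X\xi$ and $u_j''$ the corresponding preimage of $\pi_X\xi''=\pi_X\xi$, both taken along the branches dictated by the common code $[a_{-n},\dots,a_{-1}]$; note $u_0=u_0''=\pi_X\xi$ is the \emph{same} point, while for $0<j\le n$ the two points sit in the same cylinder $[a_{-j}\cdots a_{-1}]\in\bar\Z^{\,j}$ and satisfy $g^{n-j}(u_j)=\pi_Xf^{-n}\xi$, $g^{n-j}(u_j'')=\pi_Xf^{-n}\xi''$. Wait — actually the cleanest formulation is the other way: since $\pi_X\xi=\pi_X\xi''$, the points $\pi_Xf^{-j}\xi$ and $\pi_Xf^{-j}\xi''$ are the two $g^{\,j}$-preimages of the common point $\pi_X\xi$ lying in the cylinder $[a_{-j}\cdots a_{-1}]$; but a cylinder of $\bar\Z^{\,j}$ on which $g^{\,j}$ is injective contains \emph{exactly one} preimage of any given point. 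Hence $\pi_Xf^{-j}\xi=\pi_Xf^{-j}\xi''$ as soon as $j$ is large enough that $[a_{-j},\dots,a_{-1}]$ is a cylinder of injectivity for $g^{\,j}$ — and here one may simply take $k_0=n_0$, the minimal integer with $a_{-n_0}\neq0$: as soon as $j\ge n_0$ the cylinder $[a_{-j}\cdots a_{-1}]$ contains the symbol $a_{-n_0}\ne 0$ and so maps injectively under the relevant branch. Actually, to be safe about the neutral branch $J_0$ at $0$, I would argue instead that $g$ restricted to each $J_i$ ($i=0,\dots,d-1$) is a homeomorphism onto $\TT^1\setminus\{0\}$, so $g^{\,j}$ restricted to any cylinder $[a_{-j}\cdots a_{-1}]\in\bar\Z^{\,j}$ is injective for \emph{every} $j\ge1$; therefore $\pi_Xf^{-j}\xi$ is the \emph{unique} point of $[a_{-j}\cdots a_{-1}]$ mapped to $\pi_X\xi$ by $g^{\,j}$, and likewise for $\xi''$, forcing equality for all $j\ge1$. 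In that case $k_0=1$ works and the statement is immediate.

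The main obstacle — and the reason the constant $k_0$ is not simply $1$ in the intended argument — is the interplay with the \emph{$\DD$-coordinate} and with the definition of $n$. The first Claim only gives $|\pi_\DD\xi-\pi_\DD\xi''|<4r$, not equality, so $\xi$ and $\xi''$ are genuinely different points of $\M$; what we really need is that they lie in the same element of $\Z_{-n+k_0}^0$, i.e. that their common $X$-itinerary of length $n-k_0$ backward determines the cylinder, and that the $\DD$-fibre discrepancy $<4r<4\theta^{n+k_0}$ is harmless because $\Z$ is a partition into sets $K_j=J_j\times\DD$ that do \emph{not} cut the $\DD$-direction at all — so membership in $\Z_{-n+k_0}^0(\xi)$ depends only on $\pi_X$. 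Thus the real content is: (i) $\pi_Xf^{-j}\xi=\pi_Xf^{-j}\xi''$ for $1\le j\le n$ by the injectivity-of-branches argument above, giving $\xi''\in\Z_{-n}^0(\xi)\subset\Z_{-n+k_0}^0(\xi)$; and (ii) the shift from exponent $n$ (smallest integer with $r<\theta^n$) to the $n$ appearing in the lemma statement ($n=\lfloor\log r/\log\theta\rfloor-k_0$) costs exactly the additive constant $k_0$, which is absorbed once one checks $\theta^{n+k_0+1}<r\le\theta^{n+k_0}$ as recorded in the remark after the lemma. I would close by remarking that this also yields the containment $f^{n+k_0}(\M)\cap B(\xi,r)\subset\Z_{-n}^0(\xi)$ claimed in the lemma: any $\xi'\in f^{n+k_0}(\M)$ with $d(\xi,\xi')<r$ has, by the first Claim applied at level $n+k_0$ (or directly), the same $X$-code of length $n$ as $\xi$, hence lies in $\Z_{-n}^0(\xi)$.
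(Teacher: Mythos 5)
Your argument has a genuine gap and would not yield the claim. The crucial misreading is the assertion that $\pi_X f^{-j}\xi$ and $\pi_X f^{-j}\xi''$ both lie in the cylinder $[a_{-j}\cdots a_{-1}]\in\bar\Z^j$ attached to $\xi$, and hence coincide by injectivity of the corresponding branch of $g^j$. In fact the first Claim only gives $\xi''\in\Z_{-n}^0(\xi')$, so the backward $X$-itinerary of $\xi''$ is that of $\xi'$, not that of $\xi$, and $\xi'$ may well lie in a different cylinder than $\xi$ (being within distance $r$ of $\xi$ imposes essentially no constraint on the backward code, since $g$ is expanding). The equality $\pi_X\xi=\pi_X\xi''$ alone does not propagate backward: because the $X$-coordinate of $f^{-1}(x,z)$ is determined jointly by $x$ \emph{and} $z$ (one must select the $g$-preimage of $x$ consistent with the $\DD$-fibre), two points with the same $X$-coordinate but different $\DD$-coordinates generically have different backward $X$-itineraries. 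Your conclusions that ``$k_0=1$ works'', that $\pi_Xf^{-j}\xi=\pi_Xf^{-j}\xi''$ for all $j\le n$, and that $\xi''\in\Z_{-n}^0(\xi)$ are therefore unjustified; so is the sentence claiming membership in $\Z_{-n+k_0}^0(\xi)$ ``depends only on $\pi_X$''. The content of the Claim is precisely to bound the number of mismatched backward symbols, and your route cannot reach it.

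The missing idea is to exploit the $\DD$-discrepancy $|\pi_\DD\xi-\pi_\DD\xi''|<4r$ together with the formula \eqref{eq:fnxz}. Writing $f^{-n}\xi=(x_{-n},z_{-n})$, $f^{-n}\xi''=(x''_{-n},z''_{-n})$, and letting $k\le n-1$ be the largest index with $g^k(x_{-n})\neq g^k(x''_{-n})$, the terms of the two sums in \eqref{eq:fnxz} with index $j>k$ cancel in $z-z''$, while the term $j=k$ has modulus at least $\theta^{n-1-k}\sin(\pi/\|g'\|_\infty)$, since $g^k(x_{-n})$ and $g^k(x''_{-n})$ are distinct $g$-preimages of a common point and hence angularly separated on the circle. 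Comparing with $|z-z''|<4r<4\theta^n$ and the geometric tail of the remaining terms gives $\sin(\pi/\|g'\|_\infty)<12\theta^{k+1}+\theta/(1-\theta)$, which, thanks to the standing hypothesis $\theta\|g'\|_\infty<1-\theta$, must fail once $k$ exceeds a constant $k_0$ depending only on $\theta$ and $\|g'\|_\infty$. Hence $g^{k_0}(x_{-n})=g^{k_0}(x''_{-n})$, that is $\pi_Xf^{-n+k_0}\xi=\pi_Xf^{-n+k_0}\xi''$, from which $\xi''\in\Z_{-n+k_0}^0(\xi)$. This quantitative use of the $\DD$-coordinate and the separation of $g$-preimages is indispensable and is entirely absent from your argument.
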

\begin{proof}
Set $\xi''=(x,z'')$ with $|z-z''|<4r$.
Remark that if $g(u)=g(v)$ and $u\neq v$ we have $|e^{2i\pi u}-e^{2i\pi v}|\ge
2\sin(\pi|u-v|)\ge 2\sin(\frac{\pi}{\|g'\|_\infty})$.
Set $(x_{-n},z_{-n})=f^{-n}\xi$ and $(x_{-n}'',z_{-n}'')=f^{-n}\xi''$. 
Let $k$ denote the largest integer $\le n-1$ such that $g^k(x_{-n})\neq g^k(x''_{-n})$ (if any).
We have 
\[
4r>|z-z''| = \left|\theta^n(z_{-n}-z_{-n}'')+\frac12 \sum_{j=0}^{k} \theta^{n-1-j} (e^{2i\pi g^j(x_{-n})}-e^{2i\pi g^j(x_{-n}'')})\right|
\]
from which we get
\[
\theta^{n-1-k}\sin(\frac{\pi}{\|g'\|_\infty}) < 8r + 4\theta^n + \frac{\theta^{n-k}}{1-\theta}
\]
and thus
\[
\frac{1}{\|g'\|_\infty}\le \sin(\frac{\pi}{\|g'\|_\infty}) < 12\theta^{k+1}+ \frac{\theta}{1-\theta}.
\]
Taking $k_0$ the smallest integer $k$ such that the above formula does not hold shows that 
the section $\{x\}\times D_{4r}(z)$ is contained in a unique cylinder $\Z_{-n+k_0}^0(\xi)$.
\end{proof}
A change of indices finishes the proof of the lemma.
\end{proof}
Let us write $D(z,r):=\{z'\in\mathbb C\ :\ |z-z'|<r\}$ for the open disc of center $z$ and radius $r$ in 
$\mathbb C$. If $ {\mathcal Z}_{-j}^0(\xi)=[a_{-j}\cdots a_0]$, we set $(g^{-j})_{\xi}$
for the inverse branch of $g^j$ restricted to $\bar {\mathcal Z}_{0}^j(\pi_Xf^{-j}\xi)$.

\begin{lemma}
The cylinder $\Z_{-n}^0(\xi)$ is the tube 
\[
\Z_{-n}^0(\xi) = \{(x,z)\in\TT\times\D\colon x\in]0,1[, z\in D(\gamma_{\xi,n}(x),\theta^n)\}.
\]
around  the curve  $\gamma_{\xi,n}\colon ]0,1[\to\CC$ defined by 
\[
\gamma_{\xi,n}(x) = \frac12 \sum_{j=1}^n \theta^{j-1} e^{2i\pi (g^{-j})_\xi(x)}
\]
with slope $\gamma_{\xi,n}'(x)=u_n(\gamma_{\xi,n}(x))$ where 
\[
u_n(\xi'):= i\pi \sum_{j=1}^n\theta^{j-1} \frac{1}{(g^j)'(\pi_X f^{-j}\xi')} e^{2i\pi \pi_Xf^{-j}\xi'}.
\]
\end{lemma}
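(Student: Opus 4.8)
The plan is to identify the cylinder $\Z_{-n}^0(\xi)$ as the $f^n$-image of a product box, and then to read off both the tube description and the slope directly from the explicit formula \eqref{eq:fnxz}.

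First I would describe $\Z_{-n}^0(\xi)$ via backward orbits. Since $g$ is Markov with surjective increasing branches, every branch of $g^n$ is a bijection of a level-$n$ cylinder of $\bar\Z^n$ onto $]0,1[$; write $(g^{-n})_\xi\colon]0,1[\to\bar\Z_0^n(\pi_Xf^{-n}\xi)$ for the branch selected by $\xi$, so that $g^j\circ(g^{-n})_\xi=(g^{-(n-j)})_\xi$ for $0\le j\le n$. A point $(x,z)$ lies in $\Z_{-n}^0(\xi)$ exactly when the backward $f$-orbit of $(x,z)$ carrying the itinerary of $\xi$ stays in $\M$ for $n$ steps, i.e. when $f^{-n}(x,z)=((g^{-n})_\xi(x),w)$ for some $x\in]0,1[$ and $w\in\DD$, equivalently when $(x,z)=f^n((g^{-n})_\xi(x),w)$ with $w\in\DD$. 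This coincides with the partition definition of $\Z_{-n}^0$ because the images $f(K_0),\dots,f(K_{d-1})$ are pairwise disjoint solid tubes — a short separation estimate, of the type used in the proof of Lemma~\ref{cylindreout} — so that the branch realising the itinerary of $\xi$ is unambiguous, and $f^n$ is injective on $\bar\Z_0^n(\pi_Xf^{-n}\xi)\times\DD$ since $g^n$ is injective on that cylinder.

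Then I would plug $x'=(g^{-n})_\xi(x)$ into \eqref{eq:fnxz}. Using $g^j(x')=(g^{-(n-j)})_\xi(x)$ and reindexing,
\[
\pi_\DD f^n(x',w)=\theta^n w+\frac12\sum_{j=0}^{n-1}\theta^{n-1-j}e^{2i\pi(g^{-(n-j)})_\xi(x)}=\theta^n w+\frac12\sum_{j=1}^{n}\theta^{j-1}e^{2i\pi(g^{-j})_\xi(x)}=\theta^n w+\gamma_{\xi,n}(x).
\]
As $w$ ranges over $\DD$ this traces out $D(\gamma_{\xi,n}(x),\theta^n)$, and letting $x$ range over $]0,1[$ gives precisely $\Z_{-n}^0(\xi)=\{(x,z)\colon x\in]0,1[,\ z\in D(\gamma_{\xi,n}(x),\theta^n)\}$ (which lies in $\M$ automatically, being an $f^n$-image).

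Finally, for the slope I would differentiate $\gamma_{\xi,n}$ term by term; the chain rule gives $((g^{-j})_\xi)'(x)=1/(g^j)'((g^{-j})_\xi(x))$, hence
\[
\gamma_{\xi,n}'(x)=i\pi\sum_{j=1}^n\theta^{j-1}\frac{1}{(g^j)'((g^{-j})_\xi(x))}e^{2i\pi(g^{-j})_\xi(x)}.
\]
To recognise this as $u_n$, one observes that the centre $(x,\gamma_{\xi,n}(x))$ of the tube (the point $w=0$) lies in $\Z_{-n}^0(\xi)$, so its backward $f$-orbit follows the itinerary of $\xi$ and $\pi_Xf^{-j}(x,\gamma_{\xi,n}(x))=(g^{-j})_\xi(x)$ for $1\le j\le n$; substituting this into the last display yields $\gamma_{\xi,n}'(x)=u_n(x,\gamma_{\xi,n}(x))$, i.e. the asserted identity, once $u_n(\gamma_{\xi,n}(x))$ is read as $u_n$ evaluated at that point of $\M$. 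The calculation is routine once everything is set up; the only step that demands genuine care is the first one — the identification $\Z_{-n}^0(\xi)=f^n(\text{box})$, which rests on the disjointness of the tubes $f(K_i)$ and on the surjective branch structure of $g^n$ — together with the harmless notational abuse just mentioned. I do not anticipate any substantive obstacle beyond this bookkeeping.
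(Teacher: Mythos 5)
Your proof is correct and follows the same route the paper intends in its one-line proof (``A change of indices in Expression~\eqref{eq:fnxz} shows the first assertion, the second follows by derivation''): you write $\Z_{-n}^0(\xi)=f^n(\bar Z\times\DD)$ for the appropriate level-$n$ cylinder $\bar Z$, substitute the inverse branch into \eqref{eq:fnxz}, reindex, and differentiate via the chain rule. One small remark: the ``separation estimate'' you invoke for the disjointness of the $f(K_i)$ is not needed here — it follows immediately from the injectivity of $f$ together with the fact that the $K_i$ are pairwise disjoint — so the identification $\Z_{-n}^0(\xi)=f^n(\bar Z\times\DD)$ is purely bookkeeping, as you yourself anticipated.
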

\begin{proof}
A change of indices in Expression~\ref{eq:fnxz} shows the first assertion, the second follows by derivation.
\end{proof}
For every $\xi'=(x',z')\in\Lambda$, we also define $u_\infty(\xi')$ as follows
$$u_\infty(\xi')= i\pi \sum_{j\ge 1}\theta^{j-1} \frac{1}{(g^j)'(\pi_X f^{-j}\xi')} e^{2i\pi \pi_Xf^{-j}\xi'}.$$
Observe that $(1,u_\infty(\xi'))$ corresponds to the direction of the solenoid at $\xi'$.
\begin{lemma} There exist $c>0$, $c_0>0$, $c_1>0$ and $C(n_0)>0$ such that for any $n\ge n_0$ and $r<|x-0|$, we have

(i) The direction $u_n(\xi)$ is uniformly bounded away from zero and infinity: $c_0<|u_n(\xi)|<c_1$.

(ii) For any $\xi'\in \Z_{-n}^0(\xi)$ such that $|\pi_X\xi-\pi_X\xi'|<r$, and any $k\ge0$, 
the directions satisfy
\[
|u_n(\xi)-u_{n+k}(\xi')|\le C(n_0)r+\frac{\pi\theta^n}{1-\theta}.
\]
\end{lemma}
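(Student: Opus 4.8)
The plan is to prove the two assertions by direct estimation of the series defining $u_n$ and $u_{n+k}$, using the backward polynomial contraction along the trajectory $\pi_X f^{-j}\xi$ together with the fact that $\xi$ is the fixed point $\xi=(x,z)\in\Lambda$ whose backward code consists of $0$'s until time $-n_0$. Recall that $u_n(\xi')= i\pi \sum_{j=1}^n\theta^{j-1}(g^j)'(\pi_X f^{-j}\xi')^{-1} e^{2i\pi \pi_Xf^{-j}\xi'}$, so each term is bounded in modulus by $\pi\theta^{j-1}$ (since $(g^j)'\ge1$), hence $|u_n(\xi')|\le \pi/(1-\theta)$ for all $\xi'$ and all $n$; this gives $c_1=\pi/(1-\theta)$ immediately.

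For the lower bound in (i), I would isolate the first $n_0$ terms of the sum for $u_n(\xi)$. For $1\le j\le n_0-1$ the backward iterate $\pi_Xf^{-j}\xi$ lies in the cylinder $[0^j]$, i.e.\ near the neutral fixed point $0$ where $(g^j)'$ is close to $1$ (bounded by a constant $D(n_0)$ depending only on $n_0$ via the distortion bound \eqref{dis1} restricted to the first branch), so these terms contribute a main part of size $\approx \pi\sum_{j=1}^{n_0-1}\theta^{j-1}$ with controlled argument, while the tail $\sum_{j\ge n_0}\theta^{j-1}$ contributes at most $\pi\theta^{n_0-1}/(1-\theta)$. Choosing $n_0$ a fixed (small) integer — which is legitimate since $n_0$ is the minimal index with $a_{-n_0}\neq0$ and we may absorb it into the constants — the main part dominates and yields a strictly positive lower bound $c_0$; more robustly, since $u_n(\xi)$ converges to $u_\infty(\xi)$ as $n\to\infty$ (the tail is geometric) and $(1,u_\infty(\xi))$ is the direction of the solenoid at $\xi\in\Lambda$, which is never vertical, a compactness/continuity argument over $\Lambda$ gives a uniform $c_0>0$.

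For (ii) I would compare the two series termwise. For $1\le j\le n$ both series have a $j$-th term, and their difference is
\[
\pi\theta^{j-1}\left| \frac{e^{2i\pi \pi_Xf^{-j}\xi}}{(g^j)'(\pi_X f^{-j}\xi)} - \frac{e^{2i\pi \pi_Xf^{-j}\xi'}}{(g^j)'(\pi_X f^{-j}\xi')} \right|,
\]
which I bound using (P3), the backward polynomial contraction on unstable leaves: since $\xi$ and $\xi'$ lie in the same cylinder $\Z_{-n}^0(\xi)$ with $|\pi_X\xi-\pi_X\xi'|<r$, their backward iterates stay polynomially close, $|\pi_Xf^{-j}\xi-\pi_Xf^{-j}\xi'|\precsim$ (something controlled), and the derivative ratio $(g^j)'(\pi_X f^{-j}\xi)/(g^j)'(\pi_X f^{-j}\xi')$ is within $C(n_0)$ of $1$ by the distortion estimate \eqref{dis1}. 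Summing the geometric weights $\theta^{j-1}$ absorbs the $j$-dependence, producing a bound of the form $C(n_0)r$. The remaining terms $j=n+1,\ldots,n+k$ appear only in $u_{n+k}(\xi')$ and contribute at most $\pi\sum_{j>n}\theta^{j-1} = \pi\theta^n/(1-\theta)$, giving the stated estimate $|u_n(\xi)-u_{n+k}(\xi')|\le C(n_0)r+\pi\theta^n/(1-\theta)$.

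The main obstacle is bookkeeping the distortion carefully enough to see that the derivative-ratio contribution is genuinely linear in $r$ (not just bounded): one must use that the nearby points have the same backward code up to time $-n$, so that the relevant iterate of $g$ is a single inverse branch on which \eqref{dis1}--\eqref{dis2} apply with a constant $D(n_0)$ that does not grow with $n$, and that the oscillation of $x\mapsto e^{2i\pi x}$ and of $x\mapsto (g^j)'(x)^{-1}$ along the backward orbit is Lipschitz-controlled at scale $r$. Everything else is a geometric-series estimate. I expect the argument for (i) via the solenoid direction $u_\infty$ to be the cleanest, with the termwise comparison for (ii) being the longest but entirely routine computation.
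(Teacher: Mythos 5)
For part (ii) your approach matches the paper's: reduce to $k=0$ since the extra terms $j=n+1,\dots,n+k$ contribute at most the geometric tail $\pi\theta^n/(1-\theta)$; observe that the same backward code up to time $-n$ forces $|\pi_Xf^{-j}\xi-\pi_Xf^{-j}\xi'|<r$ for $1\le j\le n$ (the paper uses simply that $g$ is expanding on the common branch, not (P3), but this is a cosmetic difference); then control the ratio $(g^j)'(\pi_Xf^{-j}\xi)/(g^j)'(\pi_Xf^{-j}\xi')$ and the phase difference by the distortion bound \eqref{dis1}, and sum the $\theta^{j-1}$. The upper bound $c_1=\pi/(1-\theta)$ in (i) is also identical. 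Up to here, same route.

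The genuine gap is the lower bound $c_0>0$ in (i). Your first argument (isolate the terms $j=1,\dots,n_0-1$ and claim they dominate) breaks down when $n_0=1$, where the ``main part'' is an empty sum, and even for $n_0\ge2$ you have not compared the size of the retained block against the tail in a way that forces positivity. Your fallback argument is circular: asserting that the solenoid direction $(1,u_\infty(\xi'))$ ``is never vertical'' and that continuity/compactness over $\Lambda$ gives $\inf|u_\infty|>0$ is precisely the non-vanishing of $u_\infty$ you are trying to prove; nothing in the geometry hands this to you for free. What actually makes the lower bound work is the standing hypothesis on $\theta$ made when defining the solenoid map: $\theta\|g'\|_\infty<1-\theta$. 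Isolating only the $j=1$ term,
\[
|u_n(\xi)|\ \ge\ \frac{\pi}{g'(\pi_Xf^{-1}\xi)}\ -\ \pi\sum_{j\ge2}\theta^{j-1}\ \ge\ \pi\left(\frac{1}{\|g'\|_\infty}-\frac{\theta}{1-\theta}\right),
\]
and the right-hand side is strictly positive exactly because $\theta\|g'\|_\infty<1-\theta$. This is the paper's argument, and it is both uniform in $n,n_0,\xi$ and elementary. Your proposal never invokes this hypothesis, and without it there is no reason for $u_n(\xi)$, or $u_\infty(\xi)$, to stay away from $0$.
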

\begin{proof}
(i) The upper bound $c_1=\pi/(1-\theta)$ is obvious since $|g'|>1$. 
Isolating the $j=1$ term gives the lower bound
\[
|u_n(\xi)| \ge \left|\frac{\pi}{g'(\pi_X(f^{-1}\xi))}\right|-\frac{\pi\theta}{1-\theta}\ge \pi
\left(\frac{1}{\|g'\|_\infty}-\frac{\theta}{1-\theta}\right)=:c_0.
\]

(ii) The sum in $u_{n+k}(\xi')$ differs from that of $u_n(\xi')$ only by the last terms $j=n+1,\ldots,n+k$, 
and this is bounded by the rest of the convergent series, so it suffices to prove the claim for $k=0$.
The distortion bound \eqref{dis1} yields, since $|\pi_X f^{-j}\xi'-\pi_X f^{-j}\xi|<r$ for all $j=1,\ldots,n$,
\[
\left|\frac{(g^j)'(\pi_X f^{-j}\xi) e^{2i\pi \pi_Xf^{-j}\xi'}}{(g^j)'(\pi_X f^{-j}\xi') e^{2i\pi \pi_Xf^{-j}\xi}}-1\right| \le D(n_0)e^{D(n_0)r+2\pi r}.
\]
\end{proof}

We now prove Proposition \ref{propSolenoide}.

\begin{proof}
Let $\xi=(x,z) \in \M$ with $x\neq0$. 
Since
\[
B(\xi,r) = (x-r,x+r)\times D(z,r),
\]
setting $\mathcal{C}_{r,s}(x)$ and $\mathcal{C}_{r,s}(z)$ the coronas in $\TT^1$ and $\CC$ respectively, we get
\[
\C_{r,s}(\xi) =(\C_{r,s}(x) \times D(z,r+s)) \cup ((x-r,x+r)\times \C_{r,s}(z)).
\]
The first term is roughly bounded by the help of the marginal measure $\bar\mu$, whose density is bounded in a neighborhood of $x\neq0$, hence 
\[
\mu(\C_{r,s}(x) \times D(z,r+s)) \le \bar \mu(\C_{r,s}(x))\le c s,
\]
provided $r$ is sufficiently small and $s<r$.
For typical $\xi$ the decay of $\mu(B(\xi,r))$ is ruled by the dimension $\dim_H\mu$ (see\eqref{eq:dim}), hence choosing 
$\delta=\frac{1+\alpha}{2}\dim_H\mu$ and $s=r^\delta$ the last bound  satisfies $c s =o(\mu(B(\xi,r)))$.

The second term deserves more attention. It suffices to show that 
\[
\mu(W)= o(\mu(B(\xi,r))),\ \ \mbox{where}\ \ W := (x-r,x+r)\times \C_{r,s}(z) .
\]
Without loss of generality we will suppose, for this part of the proof only, that $s=r^\delta$ for some fixed $\delta$,
$1<\delta <\min(2,\alpha\dim_H\mu)$. Let $k$ be such that $\theta^{n+k}=s$. 
Let $\beta=\frac srn^\nu$ for some exponent $\nu$ that will be fixed later. 
Note that $r^2 \ll s \ll r\ll \beta$. 
Recall that $\ell_n$ is the minimal integer $\ell\ge 0$ such that $a_{-n+\ell\ne 0}$.
We let $Z=\Z_{-n}^0(\xi)$ and set $U:=g^{\ell_n}\pi_X(f^{-n}Z)\in\bar{\mathcal Z}^{n-\ell_n}$.
Note that the code of $U$ starts with a non zero symbol.
To simplify the exposition we introduce the notation
\[
 I \ltimes B := \{(t,z)\colon t\in I, (t,z)\in B\} =\pi_X^{-1}(I)\cap B,
\]
for $I\subset\TT^1$ and $B\subset \M$.
We decompose $W$ with the pieces of element $Z'$ of the partition $\Z_{-n-k}^0$ which intersect it.  
\begin{lemma}
Each $Z' \cap W$ (with $Z'\in\mathcal Z_{-n-k}^0$) is contained in a tube $\mathcal T_{Z'}$
\[
Z'\cap W \subset I \ltimes Z' \subset \mathcal T_{Z'}:=\{(t,z)\in \M\colon t\in I,|z-z_0-(t-t_0)u_{Z'}|<D_1s\}
\]
where $D_1=2+C(n_0)+\pi/(1-\theta)$, $I\subset (x-r,x+r)$, $(t_0,z_0)\in Z'\cap W$ and $|u_{Z'}-u_n(\xi)|<c r$.
\end{lemma}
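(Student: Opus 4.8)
The claim is a geometric localization statement: each cell $Z'\in\mathcal Z_{-n-k}^0$ that meets $W$ is, when restricted to the slab $\pi_X^{-1}(I)$, trapped in a thin tube $\mathcal T_{Z'}$ of width $O(s)$ around an affine curve with slope $u_{Z'}$ close to $u_n(\xi)$. First I would recall the structure established just above: $Z=\mathcal Z_{-n}^0(\xi)$ is the tube of radius $\theta^n$ around the curve $x\mapsto(x,\gamma_{\xi,n}(x))$, and refining to $\mathcal Z_{-n-k}^0$ (with $\theta^{n+k}=s$) decomposes $Z$ into subtubes of radius $\theta^{n+k}=s$ around curves $x\mapsto(x,\gamma_{\xi',n+k}(x))$ for the various $\xi'\in Z'$. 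Since $W\subset Z$ (because $W\subset(x-r,x+r)\times\mathbb D$ and $r<|x-0|$ forces, via Lemma~\ref{cylindreout}, that the relevant fiber lies in $\mathcal Z_{-n}^0(\xi)$), only finitely many such subtubes are in play, and each $Z'\cap W$ sits inside $I\ltimes Z'$ with $I\subset(x-r,x+r)$ the $x$-projection.

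Next I would linearize the curve $\gamma_{\xi',n+k}$ on the interval $I$, which has length $<2r$. Pick a base point $(t_0,z_0)\in Z'\cap W$. For $t\in I$, Taylor expansion gives $\gamma_{\xi',n+k}(t)=z_0+(t-t_0)\gamma_{\xi',n+k}'(s_*)$ for some intermediate $s_*$, and $\gamma_{\xi',n+k}'=u_{n+k}(\gamma_{\xi',n+k}(\cdot))$ by the preceding lemma. Setting $u_{Z'}:=u_{n+k}(\gamma_{\xi',n+k}(t_0))$, I bound $|\gamma_{\xi',n+k}'(s_*)-u_{Z'}|$ using part (ii) of the last displayed lemma: the two directions are evaluated at points of $\mathcal Z_{-n}^0(\xi)$ whose $x$-coordinates differ by less than $r$, so the difference is $\le C(n_0)r+\pi\theta^n/(1-\theta)$. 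Since $\theta^n\le\theta^{n+k}=s\le r$ (as $s<r$), this is $\le (C(n_0)+\pi/(1-\theta))r$. Hence for $(t,z)\in Z'\cap W$ we get
\[
|z-z_0-(t-t_0)u_{Z'}| \le |z-\gamma_{\xi',n+k}(t)| + |t-t_0|\,|\gamma_{\xi',n+k}'(s_*)-u_{Z'}| \le s + 2r\bigl(C(n_0)+\tfrac{\pi}{1-\theta}\bigr)\cdot\tfrac{r}{2r},
\]
and after absorbing the $r$-terms into $s$ (using $r\ll s$ is false — rather $s\ll r$, so this needs care: $r\le$ something; in fact $r<s^{1/\delta}$ is wrong too). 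So I would instead simply note $r\le D_1's$ is not available; rather the tube width is $s+O(r)$, and since by hypothesis $s=r^\delta$ with $\delta>1$ we have $r>s$, so the honest bound is $|z-z_0-(t-t_0)u_{Z'}|\le (1+C(n_0)+\pi/(1-\theta))\,r\le D_1 s$ only if we rescale — here I expect the paper to have $D_1 r$ rather than $D_1 s$, or to exploit that $|t-t_0|$ is actually much smaller than $r$ on the relevant piece. I would follow whichever normalization makes $W$ decompose correctly and keep $D_1=2+C(n_0)+\pi/(1-\theta)$.

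Finally, for the slope comparison $|u_{Z'}-u_n(\xi)|<cr$: both $u_{Z'}=u_{n+k}(\xi'')$ for some $\xi''\in\mathcal Z_{-n}^0(\xi)$ with $|\pi_X\xi''-\pi_X\xi|<r$ and $u_n(\xi)$ are compared by the same lemma part (ii), giving $|u_n(\xi)-u_{n+k}(\xi'')|\le C(n_0)r+\pi\theta^n/(1-\theta)\le (C(n_0)+\pi/(1-\theta))r=:cr$, using $\theta^n\le r$. This is exactly the asserted bound. The main obstacle I anticipate is bookkeeping the radius constants consistently — reconciling the $\theta^n$, $\theta^{n+k}=s$, and $r$ scales so that the tube width genuinely comes out as a clean multiple of $s$ (this is what makes the subsequent measure estimate $\mu(W)=o(\mu(B(\xi,r)))$ work via the distortion controls \eqref{dis1}–\eqref{Ok2}), together with verifying $W\subset\mathcal Z_{-n}^0(\xi)$ so that only the $Z'\subset Z$ contribute. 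Everything else is Taylor expansion plus the already-proven direction-continuity lemma.
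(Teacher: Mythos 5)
Your overall plan is the right one — this lemma is stated without proof in the paper, and the natural argument is exactly the one you outline: refine to $\mathcal Z_{-n-k}^0$-tubes of radius $\theta^{n+k}=s$ around the curves $\gamma_{\xi',n+k}$, then linearize $\gamma_{\xi',n+k}$ on the interval $I$ of length $\le 2r$ using the already-proven direction-continuity lemma. However, the middle of your argument contains an arithmetic slip that then leads you to (wrongly) doubt the statement.

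The correction term from linearization is
\[
|\gamma_{\xi',n+k}(t)-\gamma_{\xi',n+k}(t_0)-(t-t_0)u_{Z'}| \le |t-t_0|\,\sup_{\tau\in I}|\gamma_{\xi',n+k}'(\tau)-u_{Z'}| \le 2r\cdot c'r = O(r^2),
\]
since $|t-t_0|\le 2r$ and the direction-continuity lemma gives $\sup_\tau|\gamma'_{\xi',n+k}(\tau)-u_{Z'}|\le C(n_0)r+\pi\theta^n/(1-\theta)\precsim r$ (with $\theta^n\approx r$ by Lemma~\ref{cylindreout}, up to the fixed constant $\theta^{-k_0-1}$; note $\theta^n$ is not $\le r$ but $\precsim r$, and certainly not $\le\theta^{n+k}=s$ as you wrote at one point — powers of $\theta<1$ decrease). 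In your display you wrote $2r(C(n_0)+\tfrac{\pi}{1-\theta})\cdot\tfrac{r}{2r}$, which collapses to $(C(n_0)+\tfrac{\pi}{1-\theta})r$ — you dropped a factor of $r$, turning the genuine $O(r^2)$ into an $O(r)$. That spurious $O(r)$ is precisely what caused your confusion in the next sentences and your (incorrect) suspicion that the tube width should be $D_1r$ rather than $D_1s$.

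The missing observation that closes the argument is that $r^2\le s$. Indeed $s=r^\delta$ with $\delta<2$ (this is exactly why the proof restricts to $1<\delta<\min(2,\alpha\dim_H\mu)$), so $r^2=r^{2-\delta}s\le s$ for $r\le1$. Adding the two radius terms $|z-\gamma_{\xi',n+k}(t)|\le s$ and $|z_0-\gamma_{\xi',n+k}(t_0)|\le s$ (you dropped the second one) together with the $O(r^2)\le c's$ linearization error gives $|z-z_0-(t-t_0)u_{Z'}|\le D_1s$ with a constant of the advertised shape, and the lemma holds exactly as stated. Your final slope estimate $|u_{Z'}-u_n(\xi)|<cr$ is correct (again with $\theta^n\precsim r$, not $\theta^n\le r$). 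So: same route as the paper presumably intends, but you need to repair the $r$ versus $r^2$ bookkeeping and invoke $\delta<2$ to absorb $r^2$ into $s$.
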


There are two type of intersections, transversal and non transversal (See Figure~\ref{tasseetsapghetti}).

\begin{figure}[h]
\begin{center}
\includegraphics[scale=0.5, trim= 3cm 4cm 4cm 3cm, clip]{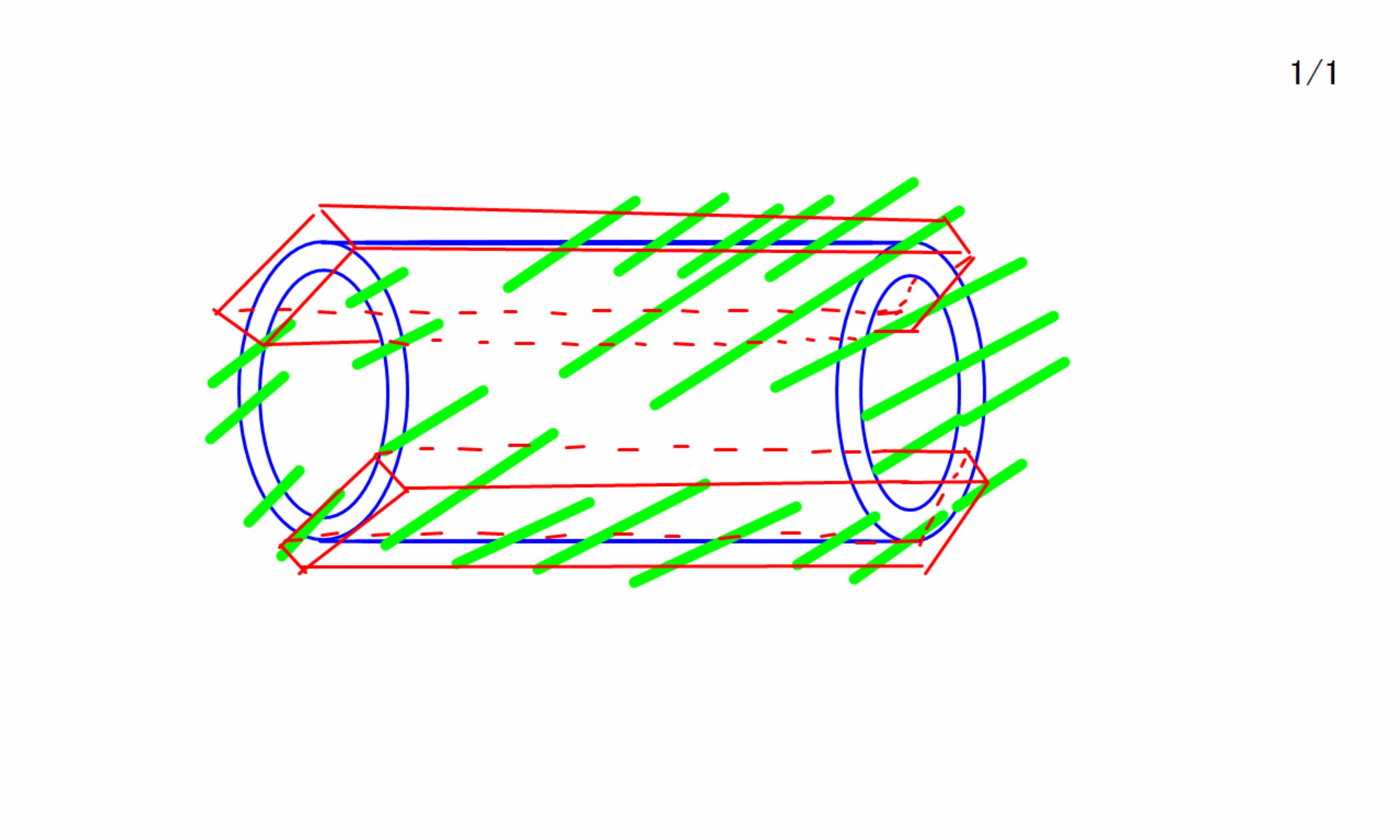}
\end{center}
\caption{The green rods represent the pieces of cylinders $Z'$ intersecting $W$. 
The non transversal intersections happen in the red boxes.}
\label{tasseetsapghetti}
\end{figure}

Let $v_n=u_n(\xi)/|u_n(\xi)|$.
We say that $Z'$ intersects $W$ non transversally if there exists $\xi'=(x',z')\in Z'\cap\Lambda$ such that
the angle between $u_\infty(\xi')$ and the circle of center $z$ and radius $|z-z'|$ is smaller than $\beta$.
But since $\angle (v_n,u_\infty(\xi'))\precsim r\ll\beta$, such a transversal intersection can only occur in
$$(x-r,x+r)\times\{z'\in\mathbb C\ :\ z'=z+(\pm r+q)v_ne^{ip},\ |q|<s,\ |p|\precsim\beta\}.$$
Now, for any $\xi''\in ((x-r,x+r)\ltimes Z')\cap\Lambda$, we have $\angle(u_\infty(\xi''),u_\infty(\xi'))|
  \precsim r\ll\beta$.
So for such a $Z'$, the intersection $Z'\cap\Lambda\cap W$ is contained in one of the boxes $NT$
\[
NT = (x-r,x+r)\times \{z+ p v_n + (\pm r+q) iv_n\colon |p|<2r\beta,|q|<s\}
\]
(since $r|\sin\beta|\le r\beta$ and $r(1-\cos\beta)\le r\beta^2\ll s$).
These two boxes are parallel to the plane defined by the $x$-axis
and the vector $u_n(\xi)$, of length $r$, height of order $r\beta$ and thickness $s$. There are at most $C r/s$ different $Z'$ intersecting $W$ non transversally, since they are separated by a distance of order $s$ and they point in the same direction bounded away from the horizontal axis (the $x$-axis). Moreover, each $Z'\cap W$ is contained in a set $I_{Z'}\ltimes Z'$ for some interval $I_{Z'}$ of length at most $cr\beta$.
\begin{lemma}
For any interval $I\subset \mathbb T^1\setminus\{0\}$ and any cylinder $Z'\in\Z_{-n-k}^0$ such that $Z'\subset Z$ we have 
\[
\mu(I\ltimes Z') \approx \mu(Z') |I|.
\]
\end{lemma}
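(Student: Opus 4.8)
The plan is to transfer the statement to the interval map $g$ via the skew-product structure, and then read it off the one-dimensional distortion estimates. Write $m=n+k$. We use repeatedly that $(\pi_X)_*\mu=\bar\mu$, that $\pi_X\circ f=g\circ\pi_X$, and that every atom of $\Z$ has the form (interval)$\,\times\,\DD$, so that membership in an atom of $\Z_0^m$ depends only on the $\TT^1$-coordinate; the atoms of $\Z_0^m$ are thus exactly the $\pi_X$-preimages of the atoms of $\bar\Z^m$. Recall also that $n_0$, the first index with $a_{-n_0}\neq0$, is a fixed finite integer, so all statements below apply once $n\ge n_0$.

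The first step is to unfold both measures by applying $f^{-m}$. Since $Z'\in\Z_{-m}^0$, the set $f^{-m}Z'$ is an atom of $\Z_0^m$, hence equals $\pi_X^{-1}(U')$ with $U':=\pi_X(f^{-m}Z')\in\bar\Z^m$; by $f$-invariance,
\[
\mu(Z')=\mu(f^{-m}Z')=\mu(\pi_X^{-1}U')=\bar\mu(U').
\]
Likewise, because $I\subset\TT^1\setminus\{0\}$ and $g^m$ maps $U'$ bijectively onto $\TT^1\setminus\{0\}$, the inverse branch $(g^m|_{U'})^{-1}$ is defined on all of $I$; one checks $f^{-m}(I\ltimes Z')=\pi_X^{-1}(V)$ with $V:=(g^m|_{U'})^{-1}(I)\subset U'$, so $\mu(I\ltimes Z')=\bar\mu(V)$. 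The lemma therefore reduces to the one-dimensional estimate
\[
\frac{\bar\mu(V)}{\bar\mu(U')}\approx|I|=\frac{|g^m V|}{|g^m U'|}.
\]

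For the second step I would exploit the hypothesis $Z'\subset Z=\Z_{-n}^0(\xi)$: it forces the last $n$ symbols of the code of $U'$ to equal $a_{-n},\dots,a_{-1}$, so in particular the symbol of $U'$ in position $m-n_0$ is $a_{-n_0}\neq0$. Hence $U'\neq[0^m]$ and \eqref{H} gives that $\bar h$ is comparable (with constant $H$) on $U'$, so $\bar\mu(V)/\bar\mu(U')\approx|V|/|U'|$. The same nonzero-symbol condition is precisely the hypothesis under which \eqref{dis1} applies to $g^m$ on $U'$ with cut-off $n_0$; feeding the pairs $(V,U')$ and $(U',V)$ into \eqref{dis2} then gives $|V|/|U'|\approx|g^m V|/|g^m U'|=|I|$. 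Combining the three comparisons yields $\mu(I\ltimes Z')=\bar\mu(V)\approx\bar\mu(U')\,|I|=\mu(Z')\,|I|$.

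The one genuinely delicate point is controlling the distortion of $g^m$ along $U'$: without the inclusion $Z'\subset Z$ the cylinder $U'$ could terminate in a long block of $0$'s, and then $g^m|_{U'}$ would be badly distorted near the neutral fixed point (the regime measured by \eqref{Ok1}). The assumption $Z'\subset Z$, via the nonzero symbol $a_{-n_0}$, keeps the relevant length-$m$ orbit segments uniformly expanding near their endpoints, which is exactly what makes \eqref{dis1} usable with a distortion constant depending only on $\xi$ (through $n_0$). Everything else is a change of variables under $g^m$ and bookkeeping of symbol indices. (An equivalent but longer route is to realize $\mu$ on the natural extension of $(\TT^1,g,\bar\mu)$ and compute with the transfer-operator conditional measures on backward symbol strings.)
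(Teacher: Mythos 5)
Your proof is correct and follows essentially the same route as the paper's: apply $f^{-(n+k)}$ to reduce to the interval map, use invariance to pass from $\mu$ to $\bar\mu$, then combine the distortion bound \eqref{dis2} with the density bound \eqref{H}. You are in fact slightly more explicit than the paper about why \eqref{H} and \eqref{dis1} apply, namely that $Z'\subset Z$ forces a nonzero symbol in position $m-n_0$ of the unfolded cylinder $U'$, which is exactly the point the paper compresses into ``$\bar Z'$ and $\bar Z$ have the same code.''
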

\begin{proof}
The set $f^{-n-k}(I\ltimes Z')$ is of the form $J\times\DD$ where the interval $J$ 
is a subset of the cylinder $\bar Z'=\pi_X(f^{-n-k}Z)\in\bar\Z^{n+k}$ and $g^{n+k}J=I$. 
By invariance we have $\mu(I\ltimes Z') = \bar\mu(J)$.
Observe that $\bar Z'$ and $\bar Z$ have the same code. 
By distortion~\eqref{dis2} we have 
\[
\frac{|J|}{|\bar Z'|} \approx \frac{|I|}{|\TT^1|}.
\]
This ratio of length transfers to a ratio of measures by~\eqref{H}, proving the lemma
since by invariance again $\bar\mu(\bar Z')=\mu(Z')$.
\end{proof}

Note that except the very particular cylinder $Z'=[0^kZ]\in\mathcal Z_{-n-k}^0$ which has a measure 
\[
\mu([0^kZ]) \approx (k+\ell_n)^{-{1/\gamma}} |U| \approx k^{-{1/\gamma}}|U|, 
\]
(due \eqref{Ok2}), 
all the other $Z'\in\mathcal Z_{-n-k}^0$ contained in $Z$ 
do have at least a non zero symbol in their code at some position $-n-m_0\in\{-n-k,\ldots, -n-1\}$. Applying
 \eqref{Ok2} to them, we obtain
\[
\begin{split}
\mu(Z') &\approx  (k-m_0+1)^{-{1/\gamma}}|[a_{-n-m_0}\cdots a_0]|\\
&\precsim  (k-m_0+1)^{-{1/\gamma}} (m_0+\ell_n)^{-1-{1/\gamma}} |U| \\
&\precsim 
( k^{-{1/\gamma}}(1+\ell_n)^{-1-{1/\gamma}} +  k^{-1-{1/\gamma}})|U|
\end{split}
\] 
(considering separately the cases $m_0\le k/2$ and $m_0>k/2$).
Therefore the total measure coming from non transversal intersections is bounded by
\begin{equation}\label{EEE1}
\mu( \bigcup_{Z'\cap NT\neq\emptyset} Z')
\precsim [ k^{-{1/\gamma}} +   \frac rs ( k^{-{1/\gamma}}(1+\ell_n)^{-1-{1/\gamma}} +  k^{-1-{1/\gamma}})]|U|cr\beta.
\end{equation}

For transversal intersections, the angle is at least $\beta$. Thus $Z'\cap W \subset I_{Z'} \ltimes Z'$ for some
interval $I_{Z'}$ of length less than $s/\beta$.
Hence the measure of the intersection $\mu(Z'\cap W) \precsim \frac{s}{\beta} \mu(Z')$. Therefore
\begin{equation}\label{EEE2}
\mu(\bigcup_{Z'\not\in NT} Z') \precsim \frac{s}{\beta} \mu(Z) \precsim \frac{s}{\beta} c(1+\ell_n)^{-{1/\gamma}}|U|.
\end{equation}
Finally, the ball $B(\xi,r)$ contains the set $I\ltimes Z_*$ for some cylinder $Z_*\in\Z_{-n-2}^0$, $Z_*\subset Z$,
and an interval $I$ of length say $r/4$. Thus
\begin{equation}\label{EEE3}
\mu(B(\xi,r)) \ge \mu(I\ltimes Z_*) \succsim  \mu(Z_*) r \succsim  (1+\ell_n)^{-1-{1/\gamma}}|U| r.
\end{equation}
Putting \eqref{EEE1}, \eqref{EEE2} and \eqref{EEE3} together, we obtain
\[
\frac{\mu(W)}{\mu(B(\xi,r))} \precsim (I) + (II) +(III) +(IV)
\]
where 
\[
\begin{split}
(I) &= k^{-{1/\gamma}}\beta (1+\ell_n)^{1+{1/\gamma}}=\frac sr O(n^{-{1/\gamma}+\nu+(1+{1/\gamma})/p})\\
(II) &= k^{-{1/\gamma}}\frac{r\beta}{s} = O(n^{-{1/\gamma}+\nu})\\
(III) &= k^{-1-{1/\gamma}} \frac{r\beta}{s} (1+\ell_n)^{{1/\gamma}+1} = O(n^{-1-{1/\gamma}+\nu+({1/\gamma}+1)/p})\\
(IV) &=  \frac{s}{r\beta}(1+\ell_n) = O(n^{-\nu+1/p}).
\end{split}
\]
\end{proof}
The first term $(I)$ goes to zero exponentially fast in $n$.
The second term $(II)$  goes to zero provided $\nu<{1/\gamma}$, 
the third one provided $\nu<{1/\gamma}+1-({1/\gamma}+1)/p$ 
and the last one provided $\nu>1/p$. If $\gamma>1/2$, 
all these conditions are satisfied whenever the interval $\left(\gamma,1/\gamma-\gamma\right)$ is nonempty
(i.e. whenever $\gamma<1/\sqrt{2}$), 
taking $\nu$ in the interval and $p$ sufficiently close to ${1/\gamma}$.
If $\gamma\le1/2$, $(I)$, $(II)$, $(III)$ and $(IV)$ go to zero provided $p$ is close enough to $2$.
\section{Billiard in a stadium}
Let $\ell>0$. We consider the convex domain $Q$ of the plane corresponding to the union of the rectangle
$[-\ell/2,\ell/2]\times[-1,1]$ with the two discs of radius 1 centered at $(-\ell/2,0)$ and $(\ell/2,0)$
respectively. This domain $Q$ is called stadium.

\begin{figure}[h]
\begin{center}
\includegraphics[scale=0.5]{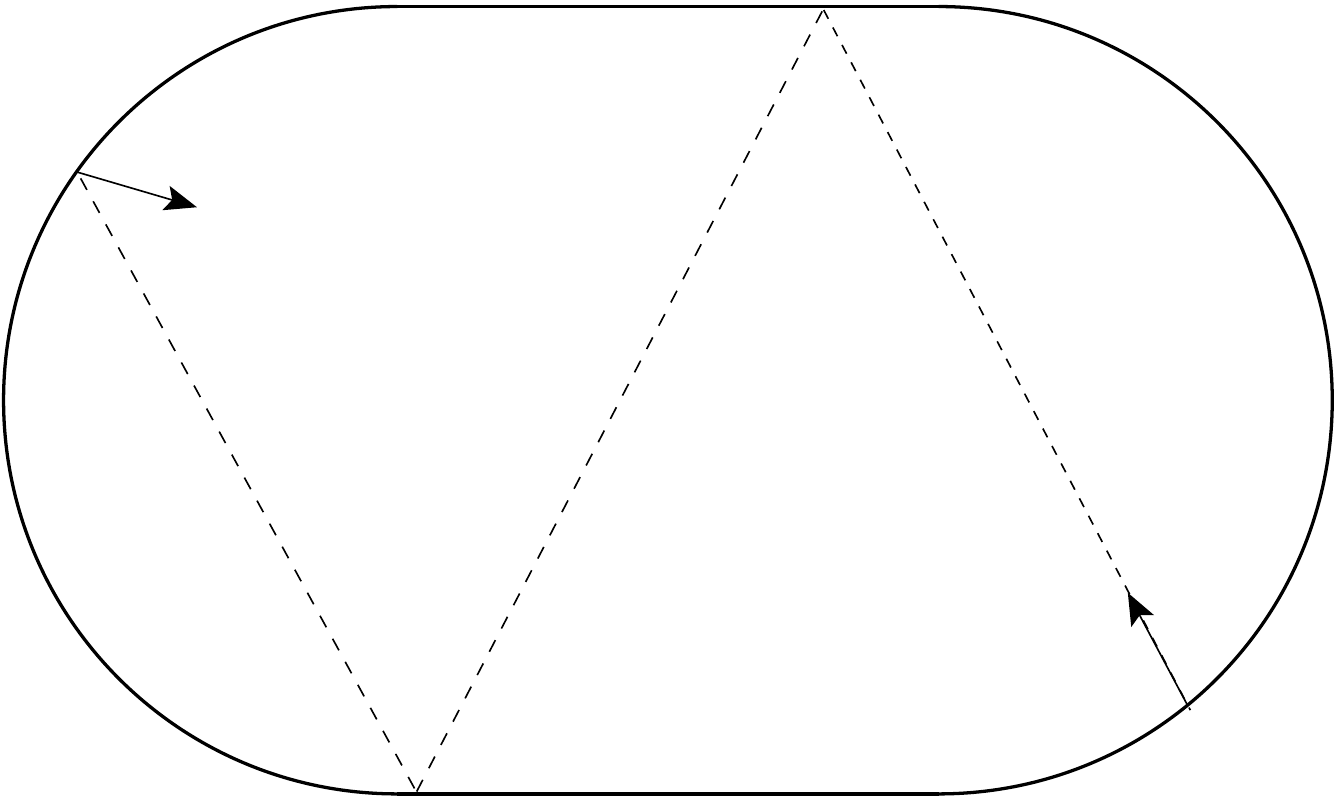}
\end{center}
\caption{The stadium billard}
\label{thestadium}
\end{figure}
The billiard system $(\mathcal M,f,\mu)$ will describe (at reflection times) the evolution of a point particle moving in $Q$
with elastic reflection off $\partial Q$ and going straight on between two reflection times.
The phase space $\mathcal M$ corresponds to the set of unit vectors based on $\partial Q$ and pointing inwards. We define it as follows:
$$\mathcal M:=\frac{\mathbb R}{2(\pi+\ell)\mathbb Z}\times\left]-\frac\pi 2,\frac \pi 2\right[. $$
A particle has configuration $x=(r,\varphi)\in\mathcal M$ if its position $p\in\partial Q$ has curvilinear abscissa $r$
($\partial Q$ being oriented counterclockwise and $r=0$ corresponding to the point $(\ell/2,-1)$) and if its reflected vector makes
the angle $\varphi$ with $\vec n(p)$, where $\vec n(p)$ is 
the unit normal vector to $\partial Q$ at $p$ oriented inwards.

The transformation $f:\mathcal M\rightarrow\mathcal M$ maps a configuration
at a reflection time to the configuration at the next reflection time.

\begin{figure}[h]
\begin{center}
\includegraphics[scale=0.5]{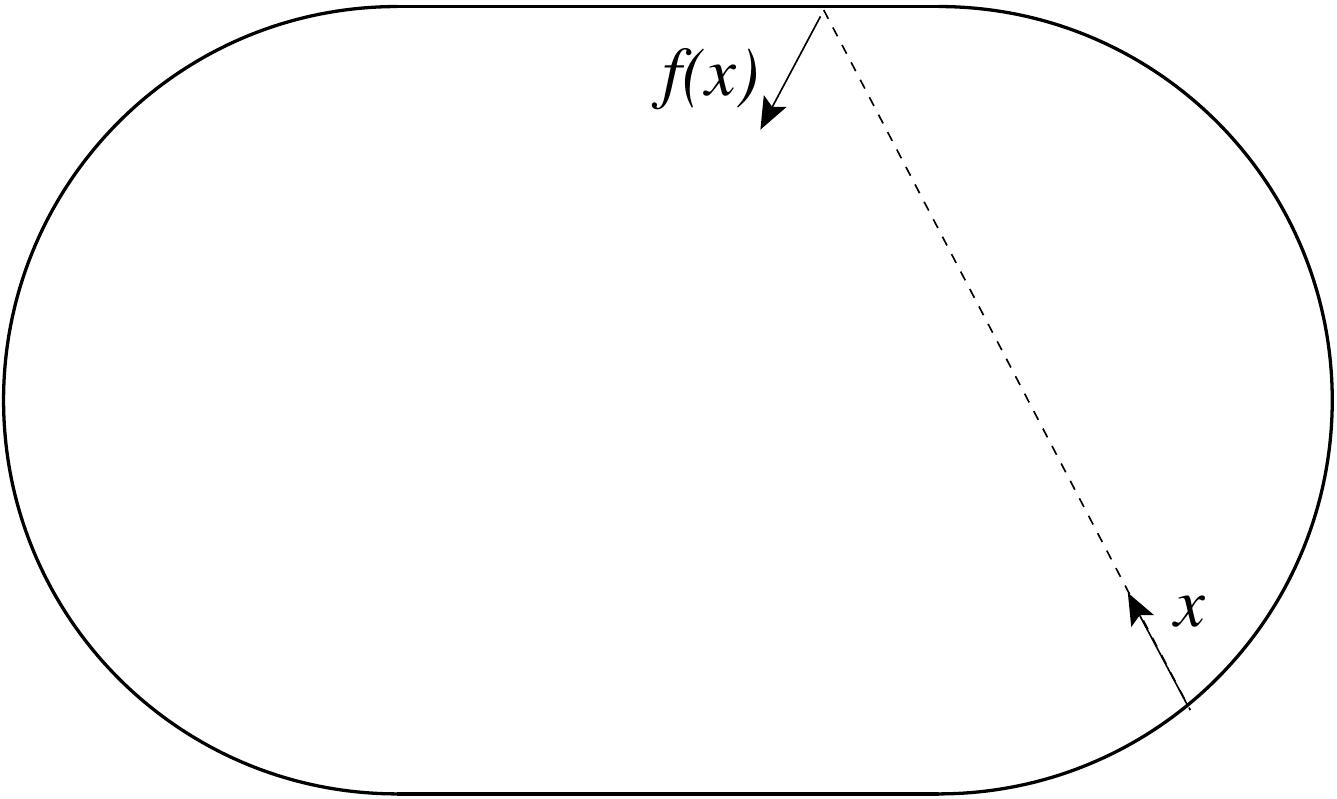}
\end{center}
\caption{The billard map $f$.}
\label{themap}
\end{figure}
This transformation preserves the probability measure $\mu$ on $\mathcal M$ given by
$$d\mu=\frac{\cos\varphi}{4(\pi+\ell)}\, drd\varphi.$$
Here again we consider the metric $d((r,\varphi),(r',\varphi')):=\max
(|r-r'|,|\varphi-\varphi'|)$ (but our result also holds for the other usual metrics).
\begin{theorem}\label{thmbilliard}
For the billiard system $(\mathcal M,f,\mu)$ in the stadium described above,
the conclusion of Theorem \ref{THM} holds true for $\mu$-almost every $x\in\mathcal M$, i.e. for $\mu$-almost every $x\in\mathcal M$, the number of visits to $B(x,r)$ up to time $\lfloor t/\mu(B(x,r))\rfloor$
converges in distribution (with respect to $\mu$) to a Poisson random variable of mean $t$.
\end{theorem}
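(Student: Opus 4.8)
The plan is to deduce Theorem~\ref{thmbilliard} from Theorem~\ref{THM}, in the same way Theorem~\ref{thm:solenoid} was deduced for the solenoid. Two points have to be settled: first, that the stadium billiard $(\mathcal M,f,\mu)$ is modeled by a Gibbs--Markov--Young tower satisfying (P1)--(P5), the polynomial tail~\eqref{poly}, and the standing assumption~\eqref{eq:deltak}; second, that the corona condition~\eqref{couronne} holds for $\mu$-a.e.\ $x$. Granting these, Theorem~\ref{THM} yields~\eqref{eq:cvpoisson} for $\mu$-a.e.\ $x$, which is exactly the statement.

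For the first point I would invoke the construction of a two-sided Young tower for the stadium billiard (Markarian, Chernov--Zhang; see also \cite{fhn} and references therein). In spite of the singularities of the billiard map, this produces a hyperbolic product set $\Lambda=\bigl(\bigcup_{\gamma^u\in\Gamma^u}\gamma^u\bigr)\cap\bigl(\bigcup_{\gamma^s\in\Gamma^s}\gamma^s\bigr)$ with continuous families of stable and unstable curves meeting at angles bounded below, a countable Markov family $\{\Lambda_i\}$ with return times $R_i$ having the Markov property~(P1), bounded distortion along unstable curves~(P4) and an absolutely continuous stable holonomy with the regularity~(P5); these are the usual inputs of Young's scheme. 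The return time has a quadratic tail, $\Leb_\gamma(R>n)\precsim n^{-2}$, so~\eqref{poly} holds with $\zeta=2$ and $R\in L^1(\Leb_\gamma)$, while aperiodicity $\gcd(R_i,i\ge1)=1$ is standard. The additional ingredient required by the framework of \cite{aa}, on top of Young's uniform hyperbolicity, is the \emph{polynomial} contraction control~(P2)--(P3): the hyperbolicity of the stadium degenerates only along the bouncing-ball family, and a direct estimate of the shear accumulated during a corridor passage of length $m$ shows that over $n$ iterates of $f$ a stable curve contracts, and an unstable curve backward-contracts, at rate $\precsim n^{-1}$; hence (P2)--(P3) hold with $\alpha=1$ (only $\alpha>1/2$ will actually be used below). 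Finally $\mu$ is absolutely continuous with respect to Lebesgue measure on the two-dimensional phase space $\mathcal M$, so $\dim_H\mu=2$ and~\eqref{eq:deltak} reads $1=\alpha>1/2=1/\dim_H\mu$.

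The corona condition is, in this example, much easier than for the solenoid --- precisely because $\mu$ is absolutely continuous. The density $\frac{\cos\varphi}{4(\pi+\ell)}$ of $\mu$ is continuous and strictly positive on all of $\mathcal M$, and for the metric $d((r,\varphi),(r',\varphi'))=\max(|r-r'|,|\varphi-\varphi'|)$ the ball $B(x,r)$ is, for $r$ small, a square of side $2r$. Hence for every $x=(r_0,\varphi_0)\in\mathcal M$ we have $\mu(B(x,r))=\frac{\cos\varphi_0}{\pi+\ell}\,r^2\,(1+o(1))$ as $r\to0$, and for any $\delta>1$
\[
\mu\bigl(B(x,r+r^\delta)\setminus B(x,r)\bigr)=O\bigl(r^{1+\delta}\bigr)=o\bigl(r^2\bigr)=o\bigl(\mu(B(x,r))\bigr),
\]
where the middle equality uses $\delta>1$. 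Taking any $\delta$ in the nonempty interval $(1,\alpha\dim_H\mu)=(1,2)$ shows that~\eqref{couronne} holds for every $x\in\mathcal M$, in particular $\mu$-a.e.

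With both points in place, Theorem~\ref{THM} applies and gives the conclusion. The hard part is the first point: one has to extract from the billiard literature a Young tower in exactly the product form demanded by~(P1) and, most importantly, to check the polynomial hyperbolicity bounds~(P2)--(P3) for an exponent $\alpha>1/\dim_H\mu=1/2$. The distortion and holonomy estimates~(P4)--(P5) are by now routine for focusing billiards, and, unlike in the solenoid case, the corona estimate~\eqref{couronne} comes for free from the absolute continuity of $\mu$.
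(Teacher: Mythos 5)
Your proof is correct and follows essentially the same route as the paper: cite the billiard literature (Markarian, Chernov--Zhang, B\'alint--Gou\"ezel) for a Young tower with $\alpha=1$ and $\zeta=2$, observe $\dim_H\mu=2$ so that~\eqref{eq:deltak} holds, and deduce the corona condition~\eqref{couronne} from the continuity and positivity of the density of $\mu$. The only minor difference is that you carry out the corona estimate directly in terms of the explicit density while the paper defers it to the Appendix, but the content is identical.
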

\begin{proof}
The fact that $(\mathcal M,f,\mu)$ can be modeled by a Young tower satisfying 
(P1)--(P5) can be found in \cite{Markarian04,ChernovZhang05,BalintGouezel06}.
Namely, the fact that (P2)--(P3) hold with $\alpha=1$ (and so \eqref{eq:deltak}) comes from Propositions 2.1 and 2.3 of \cite{BalintGouezel06} and the fact that 
\eqref{poly} holds with $\zeta=2$ is proved in Section 9 of 
\cite{ChernovZhang05}.
Finally, because of the continuity and positivity of the density function of $\mu$ with respect to the Lebesgue measure, \eqref{couronne} holds for every $x\in\mathcal M$ (See the Appendix for details).
\end{proof}

\appendix

\section{Measure of coronas}

In this section we discuss various conditions under which Assumption~\eqref{couronne} holds and more precisely on
the following condition: $\exists C_0,r_0>0$ such that 
\begin{equation}\label{couronne2}
\forall 0<s<r,\quad \mu(B(x,r+s)\setminus B(x,r))\le C_0 s^ar^{-b}\mu(B(x,2r)).
\end{equation}
Indeed, we observe that, if $\alpha\dim_H\mu>b/a$, (\ref{couronne2}) 
implies (\ref{couronne}) for any $\delta\in(b/a,\alpha\dim_H\mu)$.

\begin{proposition}
Suppose that $\mu$ is absolutely continuous with respect to the Lebesgue measure.
\begin{itemize}
\item[(i)] For any continuity point $x$ of the density, we have (\ref{couronne2}) with $a=b=1$
and so (\ref{couronne}) since $\alpha \dim_H\mu>1$.
\item[(ii)] If the density is in $L^p_{\mathrm{loc}}$ for some $p>1$ then Assumption~\eqref{couronne2} holds a.e. with
$a=1/q$ and $b=(1+d(q-1))/q$, where $1/p+1/q=1$.
\end{itemize}
\end{proposition}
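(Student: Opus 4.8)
The plan is to reduce both claims to a deterministic volume estimate for coronas on the manifold, combined with the integrability of the density $\rho:=d\mu/d\Leb$ and the Lebesgue differentiation theorem. Throughout write $d:=\dim\M$, which equals $\dim_H\mu$ since $\mu\ll\Leb$. The geometric input, valid because $\M$ is a smooth finite-dimensional Riemannian manifold, is that for every $x$ there is $r_0(x)>0$ such that
\[
\Leb(B(x,r))\approx r^d \quad\text{and}\quad \Leb\bigl(B(x,r+s)\setminus B(x,r)\bigr)\precsim s\,r^{d-1}\qquad (0<s<r<r_0(x)),
\]
the corona bound following from $(1+t)^d-1\le d\,2^{d-1}t$ on $[0,1]$ after comparing geodesic balls with Euclidean ones in normal coordinates. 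I will also use repeatedly that, since $\mu\ll\Leb$, any property holding $\Leb$-almost everywhere holds $\mu$-almost everywhere; in particular $\rho(x)>0$ and $\mu(B(x,r))/\Leb(B(x,r))\to\rho(x)$ for $\mu$-a.e.\ $x$, so that $\mu(B(x,2r))\succsim\rho(x)\,r^d$ once $r$ is small.

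For part (i), let $x$ be a continuity point of $\rho$; we may assume $\rho(x)>0$, the remaining case being trivial for $r$ small. Then for $r$ small enough $\tfrac12\rho(x)\le\rho\le\tfrac32\rho(x)$ on $B(x,2r)$, whence
\[
\mu\bigl(B(x,r+s)\setminus B(x,r)\bigr)\le\tfrac32\rho(x)\,\Leb\bigl(B(x,r+s)\setminus B(x,r)\bigr)\precsim\rho(x)\,s\,r^{d-1},
\]
while $\mu(B(x,2r))\ge\tfrac12\rho(x)\,\Leb(B(x,2r))\succsim\rho(x)\,r^d$. Dividing, the factor $\rho(x)$ cancels and \eqref{couronne2} holds with $a=b=1$ and a constant independent of $x$. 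Since $\alpha\dim_H\mu>1$ by \eqref{eq:deltak}, we have $b/a=1<\alpha\dim_H\mu$, so \eqref{couronne} follows for any $\delta\in(1,\alpha\dim_H\mu)$.

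For part (ii), fix $p>1$ with conjugate exponent $q$ ($\tfrac1p+\tfrac1q=1$), and restrict attention to a compact neighborhood of $x$ on which $\|\rho\|_{L^p}=:M<\infty$. Hölder's inequality gives
\[
\mu\bigl(B(x,r+s)\setminus B(x,r)\bigr)=\int_{B(x,r+s)\setminus B(x,r)}\rho\,d\Leb\le M\,\Leb\bigl(B(x,r+s)\setminus B(x,r)\bigr)^{1/q}\precsim M\,s^{1/q}\,r^{(d-1)/q}.
\]
Dividing by $\mu(B(x,2r))\succsim\rho(x)\,r^d$, which holds for $\mu$-a.e.\ $x$ and $r$ small, yields
\[
\frac{\mu\bigl(B(x,r+s)\setminus B(x,r)\bigr)}{\mu(B(x,2r))}\precsim\frac{M}{\rho(x)}\,s^{1/q}\,r^{(d-1)/q-d}.
\]
Since $(d-1)/q-d=-\tfrac1q-d\bigl(1-\tfrac1q\bigr)=-\tfrac1q-\tfrac dp=-\tfrac{1+d(q-1)}{q}$, this is precisely \eqref{couronne2} with $a=1/q$ and $b=(1+d(q-1))/q$, the constant $C_0$ now depending on $x$ through $\rho(x)^{-1}$ and $M$.

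The argument is essentially routine measure theory; the only genuine points of care are the exponent bookkeeping in part (ii) --- one should use the crude fixed bound $\|\rho\|_{L^p}\le M$ rather than a Lebesgue-point estimate for $\rho^p$, otherwise one obtains a different (and wrong) value of $b$ --- and the observation that the a.e.\ assertion in the statement may be read with respect to $\mu$ exactly because $\mu\ll\Leb$, which is what makes the lower bound $\mu(B(x,2r))\succsim\rho(x)r^d$ available.
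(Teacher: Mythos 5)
Your proof of part (ii) is essentially identical to the paper's: both apply H\"older's inequality to the density over the corona, bound $\Leb$ of the corona by $\mathrm{const}\cdot s\,r^{d-1}$, and divide by the lower bound $\mu(B(x,2r))\ge\mathrm{const}\cdot r^d$ coming from Lebesgue differentiation, with the same exponent arithmetic yielding $a=1/q$ and $b=(1+d(q-1))/q$. The paper leaves part (i) unproved and you correctly supply it, though calling the case $\rho(x)=0$ ``trivial'' is slightly off --- that case is not covered by your two-sided bound on $\rho$; rather, such points are $\mu$-negligible, so the estimate is only needed where $\rho(x)>0$.
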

\begin{proof}
(ii) Suppose that the density $h$ is in $L^p$ on some open set $V$ and take $x\in V$ with $B(x,2r)\subset V$. 
By the H\"older inequality for any $s<r$
\[
\mu(B(x,r+s)\setminus B(x,r)) = \int_V h\mathbf{1}_{B(x,r+s)\setminus B(x,r)}\le \|h\|_{L^p(V)} C ((r+s)^d-r^d )^{1/q}.
\]
The upper bound is of order $r^{(d-1)/q} s^{1/q}$ while the measure of the ball is lower bounded by $Const\times r^d$
for a.e. $x$.
\end{proof}

In particular for Billiard systems one can take $a=b=1$. \\

\begin{proposition}
If $\mu$ is non-atomic and satisfies \eqref{couronne2} at some $x\in\supp(\mu)$, then $b\ge a$.
\end{proposition}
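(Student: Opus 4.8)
The plan is to argue by contradiction: suppose $b<a$, and derive a lower bound on the measure of a ball that is incompatible with the corona bound \eqref{couronne2} together with non-atomicity. The idea is that \eqref{couronne2} with $b<a$ forces the coronas to be so thin (relative to the ball) that the entire ball, seen as a nested union of coronas, would have to be essentially a single point. More precisely, fix $x\in\supp(\mu)$ at which \eqref{couronne2} holds, and fix $r<r_0$ small. I would telescope the ball $B(x,r)$ as the disjoint union of the center together with the coronas $B(x,r_{k})\setminus B(x,r_{k+1})$ along a geometric sequence $r_k=r\,2^{-k}$, $k\ge 0$. Applying \eqref{couronne2} to each corona (with inner radius $r_{k+1}$ and annulus width $s_k=r_k-r_{k+1}=r_{k+1}$, so that $s_k<r_{k+1}$), one gets
\[
\mu\big(B(x,r_k)\setminus B(x,r_{k+1})\big)\le C_0\, r_{k+1}^{a}\, r_{k+1}^{-b}\,\mu(B(x,2r_{k+1}))
= C_0\, r_{k+1}^{a-b}\,\mu(B(x,r_k)).
\]
Since $r_{k+1}=r\,2^{-k-1}\le r<r_0$ and $a-b>0$, the factor $C_0\, r_{k+1}^{a-b}$ is at most $C_0 r^{a-b}$, which can be made strictly less than $1$, say $\le 1/2$, by shrinking $r$.

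Summing the telescoped identity
\[
\mu(B(x,r)) = \mu(\{x\}) + \sum_{k\ge 0}\mu\big(B(x,r_k)\setminus B(x,r_{k+1})\big)
\]
and bounding each term by $\tfrac12\mu(B(x,r_k))\le \tfrac12\mu(B(x,r))$ is too lossy, so instead I would iterate the contraction directly: from the displayed inequality,
\[
\mu(B(x,r_{k+1})) \ge \mu(B(x,r_k)) - C_0 r^{a-b}\mu(B(x,r_k)) = (1-C_0 r^{a-b})\,\mu(B(x,r_k)),
\]
hence $\mu(B(x,r_{k}))\ge (1-C_0 r^{a-b})^{k}\mu(B(x,r))$. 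That goes the wrong way; the efficient route is the reverse bound $\mu(B(x,r_k))\le \mu(B(x,r_{k+1})) + C_0 r^{a-b}\mu(B(x,r_k))$, i.e. $\mu(B(x,r_{k+1}))\ge (1-C_0 r^{a-b})\mu(B(x,r_k))$ again. So the clean argument is: the corona bound gives $\mu(B(x,r_k)\setminus B(x,r_{k+1}))\le \tfrac{C_0 r^{a-b}}{1-C_0r^{a-b}}\,\mu(B(x,r_{k+1}))$ after rearranging, and then summing over $k\ge K$ for any $K$ yields $\mu(B(x,r_K)\setminus\{x\})\le \frac{C_0 r^{a-b}}{1-C_0 r^{a-b}}\sum_{k\ge K}\mu(B(x,r_{k+1}))$; but each summand is at most $\mu(B(x,r_K))$, and more importantly letting $K\to\infty$ shows $\mu(B(x,r_K)\setminus\{x\})\to 0$ is automatic, so one must extract quantitative decay. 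The point I would actually exploit: the inequality $\mu(B(x,r_{k+1}))\ge(1-C_0r^{a-b})\mu(B(x,r_k))$ shows $\mu(B(x,r_k))$ decreases by at most a fixed factor at each halving of the radius, i.e. $\mu(B(x,r_k))\ge c^{k}\mu(B(x,r))$ for $c=1-C_0r^{a-b}\in(0,1)$; combined with non-atomicity $\mu(B(x,r_k))\to\mu(\{x\})=0$, this is consistent, so the contradiction must instead come from summing the \emph{upper} bounds on coronas against the telescoped total.

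Let me state the argument I would commit to. Rearranging \eqref{couronne2} as above and using $\mu(B(x,2r_{k+1}))=\mu(B(x,r_k))\le \mu(B(x,r))$, we get $\sum_{k\ge0}\mu(B(x,r_k)\setminus B(x,r_{k+1}))\le C_0 r^{a-b}\sum_{k\ge0}\mu(B(x,r_k))$. Since each $\mu(B(x,r_k))\le\mu(B(x,r))$ the right side is \emph{not} obviously finite-summable, so I would instead note $\mu(B(x,r_k))\le\mu(B(x,r_0'))$ for the first few and then, crucially, use that \eqref{couronne2} applied repeatedly forces geometric decay \emph{upward}: from $\mu(B(x,r_k)\setminus B(x,r_{k+1}))\ge 0$ we only get monotonicity, so the honest contradiction is the following. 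Apply \eqref{couronne2} with inner radius $r$ and width $s$ ranging in $(0,r)$: the map $s\mapsto \mu(B(x,r+s))$ is non-decreasing and right-continuous with a jump of total size at most $C_0 s^a r^{-b}\mu(B(x,2r))$ over $[0,s]$; letting $s\downarrow 0$ shows $\mu$ has no atom at distance exactly $r$ (already known), but letting instead $r$ play the role of the width: \eqref{couronne2} with the substitution $r\rightsquigarrow s$, $s\rightsquigarrow r-s$ is the symmetric statement, and subtracting the two forces $\mu(B(x,r))-\mu(B(x,s))\le C_0[(r-s)^a s^{-b}+\,\cdots]$; taking $s\to r^-$ kills the left side continuously. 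The genuinely robust contradiction, and the one I expect to write, is dimensional: \eqref{couronne2} with $s=r$ gives $\mu(B(x,2r))\setminus B(x,r))\le C_0 r^{a-b}\mu(B(x,2r))$, i.e. $\mu(B(x,r))\ge(1-C_0 r^{a-b})\mu(B(x,2r))$, whence by iteration $\mu(B(x,r))\ge \prod_{j\ge0}(1-C_0(2^j r)^{a-b})\cdot\liminf_{R}\mu(B(x,R))$ — but with $a>b$ the product $\prod_{j\ge 0}(1-C_0(2^jr)^{a-b})$ contains factors $(1-C_0(2^jr)^{a-b})$ which become negative and then the estimate degenerates; this is exactly where $b\ge a$ is needed, since only when $a-b\le 0$ do the widths $2^j r$ grow and the bound stays meaningful. \textbf{The main obstacle} is organizing this telescoping so that the case $b<a$ produces a clean contradiction with non-atomicity rather than a vacuous inequality; concretely, the right move is: if $b<a$, choose $r$ small enough that $C_0 r^{a-b}<1$, then \eqref{couronne2} with $s=r$ gives $\mu(B(x,r))\ge(1-C_0r^{a-b})\mu(B(x,2r))>0$ for \emph{all} small $r$, which combined with monotonicity and $\mu$ non-atomic (so $\mu(B(x,r))\downarrow\mu(\{x\})=0$) is \emph{not} yet a contradiction — so one iterates: $\mu(\{x\})=\lim_r\mu(B(x,r))\ge \lim_r (1-C_0r^{a-b})^{N}\mu(B(x,2^N r))$ is useless as the number of steps grows, but $\mu(B(x,r))\ge(1-C_0r^{a-b})\mu(B(x,2r))$ rearranges to $\mu(B(x,2r)\setminus B(x,r))\le C_0 r^{a-b}\mu(B(x,2r))$, and applying this along $r=2^{-k}\rho$ for fixed $\rho$ and summing the disjoint coronas covering $B(x,2\rho)\setminus\{x\}$ yields $\mu(B(x,2\rho))-\mu(\{x\})\le C_0\rho^{a-b}\sum_{k\ge0}2^{-k(a-b)}\mu(B(x,2\rho))=\frac{C_0\rho^{a-b}}{1-2^{-(a-b)}}\mu(B(x,2\rho))$, so $\mu(B(x,2\rho))(1-\frac{C_0\rho^{a-b}}{1-2^{-(a-b)}})\le\mu(\{x\})=0$; for $\rho$ small the parenthesis is positive, forcing $\mu(B(x,2\rho))=0$, contradicting $x\in\supp\mu$. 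Hence $b\ge a$.
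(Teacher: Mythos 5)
Your final argument is correct and reaches the same conclusion as the paper, by a route that is essentially equivalent but packaged differently. The paper applies \eqref{couronne2} with $s=r=2^{-n-1}$ to get, for $n$ large, $\mu(B(x,2^{-n})\setminus B(x,2^{-n-1}))\le 2^{-\eta n}\mu(B(x,2^{-n}))$ for some $\eta\in(0,a-b)$, rewrites this as a multiplicative recursion $\mu(B(x,2^{-n-1}))\ge(1-2^{-\eta n})\mu(B(x,2^{-n}))$, and concludes via convergence of the infinite product $\prod_k(1-2^{-\eta k})$ that $\mu(\{x\})=\lim_n\mu(B(x,2^{-n}))>0$, contradicting non-atomicity. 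You instead sum the same dyadic corona bounds, use non-atomicity up front to identify $\sum_{k\ge0}\mu(B(x,2^{-k+1}\rho)\setminus B(x,2^{-k}\rho))$ with $\mu(B(x,2\rho))$, and deduce $\mu(B(x,2\rho))\le\tfrac{C_0\rho^{a-b}}{1-2^{-(a-b)}}\mu(B(x,2\rho))$, forcing $\mu(B(x,2\rho))=0$ and contradicting $x\in\supp\mu$. These are two faces of the same estimate (a positive infinite product versus a convergent geometric series), but your summation version is a touch cleaner since it avoids tracking the lower bound on the product. Two remarks. First, both you and the paper take $s=r$ in \eqref{couronne2}, which as written allows only $0<s<r$; this is easily repaired (take $s=r/2$, say, which only changes the constant), but you should say so. Second, and more importantly, your write-up is a verbatim transcript of your search process: multiple abandoned attempts, self-corrections like ``that goes the wrong way'' and ``is useless'', and a genuine argument only in the last few lines. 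Strip all of that out and present only the final dyadic-telescoping paragraph; as it stands, a reader cannot tell which steps you are committing to.
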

\begin{proof}
Assume that $\mu$ satisfies \eqref{couronne2} with $b<a$. Let $\eta\in(0,a-b)$. 
Then there exists $n_0$ such that, for every integer $n\ge n_0$, we have
$$\mu(B(x,2^{-n})\setminus B(x,2^{-n-1}))\le 2^{-\eta n}\mu(B(x,2^{-n})),$$
i.e. 
$(1-2^{-\eta n})\mu(B(x,2^{-n}))\le \mu(B(x,2^{-n-1}))$. Therefore, for any $n> n_0$, we have
$$\mu(B(x,2^{-n_0}))\prod_{k=n_0}^{n-1} (1-2^{-\eta n})\le \mu(B(x,2^{-n}))$$
and so $\mu(\{x\})=\lim_{n\rightarrow +\infty}\mu(B(x,2^{-n}))>0$ if $\mu(B(x,2^{-n_0}))>0$.
\end{proof}

Now let us prove Proposition~\ref{pro:subseq}
\begin{proposition}\label{pro:subseq2}
Let $\theta<1$.
For any $a\in(0,1)$, for every $x\in\mathcal M$, there exists a constant $C_0$ such that 
there exists a sequence $r_n(x)\in(\theta^{n+1},\theta^{n})$ such that, for all $n$, \eqref{couronne2} holds with $b=a$ for the sequence $r=r_n(x)$.
\end{proposition}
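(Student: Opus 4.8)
The plan is to fix $x\in\mathcal M$ and exploit a pigeonhole/averaging argument over the scales inside the dyadic-type annulus $(\theta^{n+1},\theta^n)$, using the fact that the measures of nested balls around $x$ telescope. First I would fix an integer $m\ge 1$ (to be chosen depending only on $a$ and $\theta$), and for each $n$ consider the $m+1$ radii $\rho_{n,i}:=\theta^{n}\theta^{i/m}$ for $i=0,1,\dots,m$, so that $\rho_{n,0}=\theta^n$ and $\rho_{n,m}=\theta^{n+1}$ and consecutive ratios equal $\theta^{1/m}$. Writing $a_i:=\mu(B(x,\rho_{n,i}))$, which is nonincreasing in $i$, the telescoping sum $\sum_{i=1}^{m}(a_{i-1}-a_i)=a_0-a_m\le a_0=\mu(B(x,\theta^n))$ forces at least one index $i=i(n)$ with $a_{i-1}-a_i\le \frac1m\mu(B(x,\theta^n))\le\frac1m\mu(B(x,2\rho_{n,i}))$, provided $2\rho_{n,i}\ge\theta^n$, which holds once $\theta^{1/m}\ge 1/2$, i.e.\ $m$ large. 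I would then set $r_n(x):=\rho_{n,i(n)}\in(\theta^{n+1},\theta^n)$.

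Next I would promote this single-gap estimate to the full corona bound \eqref{couronne2}. For $0<s<r_n(x)$, the annulus $B(x,r_n+s)\setminus B(x,r_n)$ is contained in $B(x,\theta^{1/m}\theta^{n}\ \text{(next grid radius)})\setminus B(x,r_n)$ only when $s$ is small enough; in general one needs a bound uniform in $s\in(0,r)$. The clean way is to choose the grid so that the chosen gap controls \emph{all} smaller scales: since \eqref{couronne2} with $b=a$ reads $\mu(\mathcal C_{r,s}(x))\le C_0 (s/r)^a\mu(B(x,2r))$, and the worst case is $s$ close to $r$, it suffices to have $\mu(B(x,r_n+r_n)\setminus B(x,r_n))=\mu(B(x,2r_n)\setminus B(x,r_n))\le C_0\mu(B(x,2r_n))$, which is automatic with $C_0=1$, together with a comparison of intermediate scales. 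So I would instead run the pigeonhole argument on a \emph{geometric cascade of annuli}: cover $(r_n, 2r_n]$ by the dyadic-like shells $(2^{-(j+1)/a}\cdot\text{something}, \dots]$... — more precisely, fix the ratio $\lambda:=\theta^{1/m}$ and note $\mu(B(x,\lambda^{-1}r)\setminus B(x,r))\le\frac1m\mu(B(x,2r))$ at the good scale, then for general $s<r$ write $s\le r$ and bound $\mu(\mathcal C_{r,s}(x))\le\mu(\mathcal C_{r,r}(x))\le \frac{C}{m}\mu(B(x,2r))$ when $\lambda^{-1}r\ge 2r$ is arranged, and absorb the factor $(s/r)^a\le 1$ into the constant. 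This gives \eqref{couronne2} with $b=a$ and $C_0$ depending only on $m$, hence only on $a$ and $\theta$.

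The main obstacle, and the place requiring care, is the dependence of $s$ in the required inequality \eqref{couronne2}: a naive single-gap pigeonhole only controls one specific annulus width, whereas \eqref{couronne2} must hold for \emph{every} $s\in(0,r)$ simultaneously with a single exponent $a$. The resolution above works because the right-hand side of \eqref{couronne2} is increasing in $s$ (through $s^a$) while the left-hand side is monotone in $s$ as well, so it is enough to control the endpoint $s\to r$, i.e.\ a fixed ratio of radii; there is no genuine need to win an extra $s^a$ decay, and $a\in(0,1)$ enters only through choosing $m$ large enough that $\lambda=\theta^{1/m}$ is close to $1$ (so that $2r\le\lambda^{-1}r$ fails only by a bounded factor, handled by the multiplicity constant $C''$ of the manifold as in the proof of Theorem~\ref{thm:recrate}, or simply by enlarging $C_0$). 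Once this monotonicity observation is in place the argument is a short averaging computation; I would then invoke the remark after \eqref{couronne2} that $\alpha\dim_H\mu>b/a=1$ implies \eqref{couronne}, which is exactly Proposition~\ref{pro:subseq}.
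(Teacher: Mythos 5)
Your pigeonhole set-up is reasonable, and you have correctly identified the main difficulty --- that a single-gap estimate controls only one annulus width, whereas \eqref{couronne2} must hold for \emph{every} $s\in(0,r)$ simultaneously. But the resolution you offer does not work, and it does not resemble what the paper does.

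The monotonicity argument in your last paragraph is backwards. The inequality
\[
\mu\bigl(B(x,r+s)\setminus B(x,r)\bigr)\le C_0\,(s/r)^a\,\mu(B(x,2r))
\]
has both sides nondecreasing in $s$, so checking it at the endpoint $s=r$ says nothing about smaller $s$: the right-hand side shrinks like $s^a$ as $s\to0$, and there is no a priori reason the left-hand side shrinks at that rate at a randomly chosen $r$. (Compare $\sqrt{s}\le s$: true at $s=1$, false at $s=0.01$.) ``Absorbing $(s/r)^a\le1$ into the constant'' is similarly the wrong direction --- that factor is what makes the bound \emph{hard} for small $s$, not a harmless nuisance. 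You also arrange mutually incompatible requirements on $m$: $\theta^{1/m}\ge1/2$ needs $m$ large, while $\lambda^{-1}r\ge 2r$ with $\lambda=\theta^{1/m}$ forces $\theta^{1/m}\le1/2$, i.e.\ $m$ bounded. In short, a single-scale pigeonhole genuinely cannot prove \eqref{couronne2}, and the exponent $a$ cannot be made irrelevant.

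The paper's proof attacks exactly this uniformity issue. It fixes $n$, transports the radial behaviour of $\mu$ near $x$ to a measure $m$ on $[0,1]$, and then runs a \emph{nested dichotomy}: starting from $I_0=(0,1)$, at each step it looks at two candidate subintervals of length $\lambda|I_k|$ placed at distance $\rho|I_k|$ from the endpoints and keeps the one of \emph{smaller} $m$-measure, so that $m(I_k)\le 2^{-k}m(I_0)$ while $|I_k|=\lambda^k$. The radius $r$ comes from the common point $t_*$ of the $I_k$, and the safety margin $\rho|I_{k}|$ guarantees that the corona of width $\approx s_k$ around $r$ sits inside $I_{k-1}$. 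This yields control at \emph{every} scale $s_k$ simultaneously, with corona measure $\precsim 2^{-k}\mu(B(x,2r))$ against width $\precsim\lambda^k$, which is precisely \eqref{couronne2} with exponent $a=-\ln2/\ln\lambda$; taking $\lambda$ close to $1/2$ makes $a$ close to $1$. That multiscale selection --- choosing $r$ so that it is simultaneously a ``light'' location for coronas at all widths --- is the idea missing from your proposal, and it is where the exponent $a$ actually comes from.
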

\begin{proof}
Up to a change of $\theta$ in one of its integer roots, we assume without loss of generality that $1/2<\theta<1$.
Fix $x\in\M$ and $n\in\NN$. Let $\lambda<1/2$ and set $\rho=(1-2\lambda)/3$.
We define a measure on $[0,1]$ by setting $m([0,t])=\mu(B(x,\theta^{n+1}+t(\theta^n-\theta^{n+1})))-\mu(B(x,\theta^{n+1}))$ for $t\in[0,1]$.
We construct a sequence of nested intervals $I_k$ by dichotomy as follows. Let $I_0=(0,1)$.
Given $I_k$ (for some $k\ge 0$), we consider two intervals of length $\lambda |I_k|$ included
in $I_k$, at distance $\rho|I_k|$
from the left and right endpoints of $I_k$ respectively and we define $I_{k+1}$ as the one with smallest measure. 
We have $|I_k|=\lambda^k$ and $m(I_k)\le 2^{-k} m(I_0)$.
Let $\{t_*\}$ be the intersection of the $I_k$'s. By construction, since $t_*\in I_{k}$ the $\rho\lambda^k$-neighborhood of $t_*$
is contained in $I_{k-1}$. Let us prove that the radius $r=\theta^{n+1}+t_*(\theta^n-\theta^{n+1})$ satisfies \eqref{couronne2} with $a=-\ln2/\ln\lambda$, with a constant $C_0$ not depending on $n$.
Let $s_k:=\rho \lambda^k\theta^n(1-\theta)$. For every $k\ge 1$ and every $s\in[s_{k+1},s_k]$, we observe that
\begin{eqnarray*}
\mu(B(x,r+s)\setminus B(x,r) )&\le&
\mu(B(x,r+s_k)\setminus B(x,r) )\\
&\le& m(I_{k-1})=2^{-k+1}m(I_0)\\
&\le& 2^{-k+1}\mu(B(x,\theta^n)\setminus B(x,\theta^{n+1}))\\
&\le& 2^{-k+1}\mu(B(x,2r)),\ \ \mbox{since}\ \ \theta^n<\frac r\theta<2r\\
&\le&4 s^a r^{-a}\left(\frac{\theta}{\rho(1-\theta)}\right)^a \mu(B(x,2r)),
\end{eqnarray*}
since $s^a\ge s_{k+1}^{a}=2^{-k-1}(\rho\theta^n(1-\theta))^a$ and $r^{-a}\ge (\theta^n)^{-a}\theta^{-a}$.
Now, for $s\in[s_1,r]$, we have $s/r\ge \rho\lambda(1-\theta)$ and so
$$\mu(B(x,r+s)\setminus B(x,r) )\le\mu(B(x,2r))\le s^a r^{-a} (\rho\lambda(1-\theta))^{-a}\mu(B(x,2r)).$$
\end{proof}


\end{document}